\begin{document}

\setlength{\abovedisplayskip}{3pt}
\setlength{\belowdisplayskip}{3pt}
\setlength{\abovedisplayshortskip}{1.5pt}
\setlength{\belowdisplayshortskip}{1.5pt}

\title[Strong exceptional parameters for the dimension of nonlinear slices]{Strong exceptional parameters for \\ the dimension of nonlinear slices}
\author{Ryan E.~G.~Bushling}
\address{Department of Stochastics \\ Budapest University of Technology and Economics, 1. Egry J\'{o}zsef Street \\ Budapest, Hungary 1111}
\email{rbushling@edu.bme.hu}
\subjclass[2020]{Primary: 28A75, 28A78; Secondary: 42B10, 42B25}
\keywords{Marstrand's slicing theorem, Generalized projections, Heisenberg group}

\begin{abstract}
    Let $1 \leq m < s \leq n$ and let $A \subseteq \mathbb{R}^n$ be a Borel set of with $s$-dimensional Hausdorff measure $\mathcal{H}^s(A) > 0$. The classical Marstrand slicing theorem states that, for almost every $m$-dimensional subspace $V \subset \R^n$, there is a positive-measure set of $\mathbf{x} \in V$ such that $\mathbf{x} + V^\perp$ intersects $A$ in a set of Hausdorff dimension $s-m$. We prove a strong and quantitative version of Marstrand's slicing theorem in the Peres--Schlag framework. In particular, if $(\Pi_{\bm{\lambda}}: \Omega \to \mathbb{R}^m)_{\bm{\lambda} \in U}$ is a family of generalized projections that satisfies the transversality and strong regularity conditions of degree $0$, then for every $A \subseteq \Omega$ with $\mathcal{H}^s(A) > 0$, the set of $\bm{\lambda}$ in the parameter space $U \subseteq \mathbb{R}^N$ such that $\dim\!\big(A \cap \Pi_{\bm{\lambda}}^{-1}(\mathbf{x})\big) < s-m$ for a.e.\! $\mathbf{x} \in \mathbb{R}^m$ has Hausdorff dimension at most $N + m - s$. If moreover $\mathcal{H}^s(A) < \infty$, then this exceptional set is universal for the subsets of $A$ with positive $s$-dimensional Hausdorff measure in the sense that this same collection of parameters contains the corresponding exceptional sets of all those subsets of $A$. When $(\Pi_{\bm{\lambda}})_{\bm{\lambda} \in U}$ is only transversal and strongly regular of some sufficiently small order $\beta > 0$, a similar conclusion holds modulo an error term of order $\beta^{1/3}$.
\end{abstract}

\maketitle

\section{Introduction} \label{s:intro}

\subsection{Main results} Our purpose is to build on a paper \cite{orponen2014slicing} of Orponen that sharpens Marstrand's slicing theorem and extends it to slices by fibers of generalized projections in the sense of Peres--Schlag \cite{peres2000smoothness}. The terminology of the Peres--Schlag paper, although standard, is also substantial, so we first state the main result in the classical setting of slices by affine subspaces of Euclidean space. Given integers $1 \leq m < n$, let $\Gr(n,m)$ denote the Grassmannian of $m$-dimensional subspaces of $\R^n$; and, for $V \in \Gr(n,m)$ let $\pi_{\scalebox{0.65}{$V$}}$ denote the orthogonal projection of $\R^n$ onto $V \cong \R^m$.

\begin{thm} \label{thm:ortho}
    Let $m < s < n$ and let $A \subseteq \R^n$ be a Borel set with $0 < \mathcal{H}^s(A) < \infty$. There exists a set $E \subset \Gr(n,m)$ with
    \begin{equation*}
        \hdim E \leq m(n-m) + m - s
    \end{equation*}
    such that, for every subspace $V \in \Gr(n,m) \setminus E$ and every Borel set $B \subseteq A$ with $\mathcal{H}^s(B) > 0$, the equation
    \begin{equation} \label{eq:ortho-fiber-bounds}
        \hdim\!\big(B \cap \pi_{\scalebox{0.65}{$V$}}^{-1}(\mathbf{x})\big) = s-m
    \end{equation}
    holds for a positive-$\mathcal{H}^m$-measure set of $\mathbf{x} \in V$.
\end{thm}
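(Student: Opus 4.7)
The plan is to derive Theorem \ref{thm:ortho} as a direct consequence of the general result advertised in the abstract, specialized to the family of orthogonal projections $(\pi_{\scalebox{0.65}{$V$}})_{V \in \Gr(n,m)}$. The equation \eqref{eq:ortho-fiber-bounds} will follow by combining an upper bound $\hdim(B \cap \pi_{\scalebox{0.65}{$V$}}^{-1}(\mathbf{x})) \leq s-m$ for $\mathcal{H}^m$-a.e.\ $\mathbf{x} \in V$, valid for \emph{every} $V \in \Gr(n,m)$, with a matching lower bound $\hdim(B \cap \pi_{\scalebox{0.65}{$V$}}^{-1}(\mathbf{x})) \geq s-m$ on a positive-$\mathcal{H}^m$-measure set of $\mathbf{x}$, valid for every $V$ outside the exceptional set produced by the general theorem.

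The upper bound is classical. The Marstrand--Federer integral-geometric inequality
\begin{equation*}
    \int^*_V \mathcal{H}^{s-m}\!\big(B \cap \pi_{\scalebox{0.65}{$V$}}^{-1}(\mathbf{x})\big) \, d\mathcal{H}^m(\mathbf{x}) \leq C(s,m,n) \, \mathcal{H}^s(B)
\end{equation*}
holds for every $V \in \Gr(n,m)$, so when $\mathcal{H}^s(B) < \infty$ the fiber has finite $(s-m)$-dimensional Hausdorff measure, and hence dimension at most $s-m$, for $\mathcal{H}^m$-a.e.\ $\mathbf{x}$. This already holds universally in $V$, contributing nothing to the exceptional set $E$.

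For the lower bound I would cover $\Gr(n,m)$ by finitely many smooth charts, each identifying an open subset of $\Gr(n,m)$ with an open set $U \subseteq \R^N$ where $N = m(n-m)$; a convenient choice is the standard atlas sending a plane $V$ to the matrix of the linear map $\R^m \to \R^{n-m}$ whose graph realizes $V$ in a fixed orthogonal splitting of $\R^n$. Pulling the projections back through such a chart yields a smooth family $(\Pi_{\bm{\lambda}})_{\bm{\lambda} \in U}$, which one verifies is transversal and strongly regular of degree $\beta = 0$ in the Peres--Schlag sense — essentially the computation carried out in \cite{peres2000smoothness}. Applying the main theorem on each chart produces an exceptional parameter set $E_{\text{loc}} \subseteq U$ with $\hdim E_{\text{loc}} \leq N + m - s = m(n-m) + m - s$, universal for every Borel $B \subseteq A$ with $\mathcal{H}^s(B) > 0$ by the finite-measure clause. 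Unifying the images of these local exceptional sets in $\Gr(n,m)$ over the finitely many charts preserves the dimension bound and produces the desired set $E$.

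The main obstacle is verifying the Peres--Schlag transversality and strong-regularity conditions of degree exactly $0$ — rather than some positive $\beta$ — for the chosen local parameterizations of the Grassmannian. Only the degree-$0$ case produces the bound $\hdim E \leq m(n-m) + m - s$ with no $\beta^{1/3}$ error, and the transversality constants must be controlled uniformly on each chart to invoke the general theorem cleanly. Granted this verification, the universal upper bound and the universal-in-$B$ lower bound combine immediately to yield the claimed equality on a set of fibers of positive $\mathcal{H}^m$-measure for every $V \in \Gr(n,m) \setminus E$ and every Borel $B \subseteq A$ with $\mathcal{H}^s(B) > 0$.
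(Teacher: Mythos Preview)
Your proposal is correct and matches the paper's approach: the upper bound is handled by the classical sublevel-set inequality for Lipschitz maps (the paper cites \cite{mattila1995geometry} Theorem 7.7, which is your Marstrand--Federer inequality), and the lower bound is obtained by applying Theorem \ref{thm:main} with $\beta = 0$ to the orthogonal projections in local charts on $\Gr(n,m)$, where $N = m(n-m)$. The paper likewise treats Theorem \ref{thm:ortho} as a direct special case of Theorem \ref{thm:main} rather than giving a separate proof.
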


Here and throughout, $\dim$ denotes Hausdorff dimension and $\mathcal{H}^r$ denotes $r$-dimensional Hausdorff measure ($r \geq 0$) in a separable metric space.

Informally, for all $V \in \Gr(n,m)$ outside a small ``bad" set $E$, the intersections of $A$ with the orthogonal spaces $\mathbf{x} + V^\perp$ are large (relative to $s$) for many points $\mathbf{x} \in V$. Moreover, $E$ is \textit{universal} for $A$ in the sense that this same statement holds for all subsets $B \subseteq A$ of positive $\mathcal{H}^s$-measure: $E$ contains the exceptional set of directions $V \in \Gr(n,m)$ corresponding to \textit{every} such $B$. Following Falconer and Mattila \cite{falconer2016strong}, we accordingly refer to the estimate in Theorem \ref{thm:ortho} as ``strong."

The upper bound on the dimension of slices follows immediately from the well-known sublevel set inequality for the Hausdorff measure of Lipschitz maps (see, for example, \cite{mattila1995geometry} Theorem 7.7). In particular, since $\pi_{\scalebox{0.65}{$V$}}$ is Lipschitz for \textit{all} $V \in \Gr(n,m)$, the set $A \cap \pi_{\scalebox{0.65}{$V$}}^{-1}(\mathbf{x})$ (hence, every subset thereof) has finite $(s-m)$-dimensional Hausdorff measure for $\mathcal{H}^m$-a.e.\! $\mathbf{x} \in V$. \textit{A fortiori} there are \textit{no} $V \in \Gr(n,m)$ such that the upper bound in \eqref{eq:ortho-fiber-bounds} fails for $\mathcal{H}^m$-positively-many $\mathbf{x}$.

The lower bound $\hdim\!\big(B \cap \pi_{\scalebox{0.65}{$V$}}^{-1}(\mathbf{x})\big) \geq s-m$ in Theorem \ref{thm:ortho} is a special case of our main result, which is situated in the Peres--Schlag framework. In both the statement and the subsequent proofs, it may be efficacious to begin by considering only the case $\beta = 0$. See \S \ref{s:background} for a review of the definitions, and see Theorem \ref{thm:weak-main} for a simpler statement that forgoes the universality of the exceptional set.

\begin{thm} \label{thm:main}
    Let $U \subseteq \R^N$ be an open set and let $\big( \Pi_{\bm{\lambda}} \!: \Omega \to \R^m \big)_{\bm{\lambda} \in U}$ be a family of generalized projections that is transversal and strongly regular of degree $\beta \in [0,1)$. There exists a constant $a \geq 1$, depending only on $N$ and $m$, such that the following holds. If $m < s < N+m$, then for every Borel set $A \subseteq \Omega$ with $0 < \mathcal{H}^s(A) < \infty$, there exists a set $E \subset U$ with
    \begin{equation*}
        \hdim E \leq N + m - s + a \beta^{1/3}
    \end{equation*}
    such that, for every $\bm{\lambda} \in U \setminus E$ and every Borel set $B \subseteq A$ with $\mathcal{H}^s(B) > 0$, the inequality
    \begin{equation} \label{eq:fiber-dim-bounds}
        \hdim\!\big(B \cap \Pi_{\bm{\lambda}}^{-1}(\mathbf{x})\big) \geq (1-\beta) (s-m)
    \end{equation}
    holds for a positive-Lebesgue-measure set of $\mathbf{x} \in \R^m$.
\end{thm}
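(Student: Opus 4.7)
I plan to follow the Peres--Schlag philosophy: produce a master Frostman measure on $A$, define the exceptional set $E$ solely in terms of an energy condition on that master measure, and then recover the slice bound for every subset $B \subseteq A$ of positive $\mathcal{H}^s$-measure from the master estimate. Fix $t < (1-\beta)(s-m)$. Since $\mathcal{H}^s(A) < \infty$, the density theorem decomposes $A$ into countably many Borel pieces $A_n$ on which $\mu_n := \mathcal{H}^s|_{A_n}$ is an $s$-Frostman measure; I treat one piece at a time and union the resulting exceptional sets at the end, which does not inflate the Hausdorff dimension. Write $\mu = \mu_n$ and disintegrate $\mu = \int \mu_{\bm{\lambda},\mathbf{x}} \, d(\Pi_{\bm{\lambda}})_*\mu(\mathbf{x})$ along the fibers of $\Pi_{\bm{\lambda}}$. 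The target is to show that for $\bm{\lambda}$ outside a set $E$ of dimension $\leq N + m - s + a\beta^{1/3}$, (i) $(\Pi_{\bm{\lambda}})_*\mu$ is absolutely continuous with respect to Lebesgue measure on $\R^m$, and (ii) the Riesz $t$-energy $I_t(\mu_{\bm{\lambda},\mathbf{x}})$ is finite for $\mathcal{L}^m$-a.e.\! $\mathbf{x}$. Given (i) and (ii), any $B \subseteq A$ with $\mathcal{H}^s(B) > 0$ satisfies $\mu(B) > 0$ for some $n$; the disintegration identity $\mu(B) = \int \mu_{\bm{\lambda},\mathbf{x}}(B) \, d(\Pi_{\bm{\lambda}})_*\mu(\mathbf{x})$ together with (i) forces $\mu_{\bm{\lambda},\mathbf{x}}(B) > 0$ on a Lebesgue-positive set of $\mathbf{x}$, and on that set $\mu_{\bm{\lambda},\mathbf{x}}|_B$ is a nonzero Radon measure of finite $t$-energy supported on $B \cap \Pi_{\bm{\lambda}}^{-1}(\mathbf{x})$; the Frostman characterization of dimension, together with $t \uparrow (1-\beta)(s-m)$, delivers \eqref{eq:fiber-dim-bounds}. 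Crucially $E$ depends only on $\mu$, which is the source of universality.

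\textbf{Key estimate and bounding $E$.} The quantitative core is a Peres--Schlag double-energy bound of the form
\begin{equation*}
    \int_U \int_{\R^m} I_t(\mu_{\bm{\lambda},\mathbf{x}}) \, d\mathbf{x} \, d\bm{\lambda} \; \lesssim \; \iint |\mathbf{y} - \mathbf{y}'|^{-(t+m)} \, d\mu(\mathbf{y}) \, d\mu(\mathbf{y}'),
\end{equation*}
which I would prove by mollifying the fibers at scale $\delta$, invoking transversality and strong regularity at scale $\delta$ to dominate the single-scale kernel by $|\mathbf{y}-\mathbf{y}'|^{-(t+m)}$ on the region $|\mathbf{y}-\mathbf{y}'| \gtrsim \delta$, and then dyadic-summing. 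Since $t + m < s$ and $\mu$ is $s$-Frostman, the right-hand side is finite, which already yields the theorem with an exceptional set of Lebesgue measure zero. To sharpen to a Hausdorff-dimension estimate, suppose toward contradiction that the exceptional $\bm{\lambda}$'s have dimension exceeding $N + m - s + a\beta^{1/3}$, equip a subset with a Frostman measure $\sigma$ of that exponent, and rerun the estimate with $d\sigma(\bm{\lambda})$ replacing $d\bm{\lambda}$. The Frostman condition on $\sigma$ absorbs a factor of $r^{\dim \sigma - N}$ at each dyadic scale, and after optimizing the three-scale balance the right-hand side remains finite, contradicting divergence of the slice energy on the support of $\sigma$.

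\textbf{Main obstacle.} The kernel estimate when $\beta > 0$ is the hard part. Transversality and strong regularity then carry quantitative defects of order $\beta$, and the proof of the double-energy bound requires a simultaneous balancing of three parameters --- the mollification scale, the dyadic distance $|\mathbf{y}-\mathbf{y}'|$, and the Frostman exponent of $\sigma$ --- whose optimization is precisely what produces the $a\beta^{1/3}$ loss. By contrast, the universality of $E$ is essentially free: it rests on the observation that finiteness of $I_t(\mu_{\bm{\lambda},\mathbf{x}})$ passes automatically to any nonzero restriction $\mu_{\bm{\lambda},\mathbf{x}}|_B$, so the exceptional set defined in terms of $\mu$ alone simultaneously controls every positive-$\mathcal{H}^s$-measure subset of $A$.
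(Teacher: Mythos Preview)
Your outer framework---the density decomposition of $A$ into countably many pieces on which $\mathcal{H}^s$ is $s$-Frostman, and the observation that finiteness of the slice energy passes to restrictions, whence a single exceptional set serves every $B$---matches the paper's proof in \S\ref{ss:full} exactly. That part is fine.

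The core technical step is where your proposal diverges from the paper and where there is a genuine gap. The paper does \emph{not} attack $\int_U\int_{\R^m} I_t(\mu_{\bm{\lambda},\mathbf{x}})\,d\mathbf{x}\,d\bm{\lambda}$ by mollifying fibers in $\Omega$ and then putting a Frostman measure $\sigma$ on the bad $\bm{\lambda}$-set. Instead, it first embeds $\Omega$ into $\R^{m+mN}$ via $\Psi_{\bm{\lambda}}=(\Pi_{\bm{\lambda}},D_{\bm{\lambda}}\Pi_{\bm{\lambda}})$ (this is essential---$\Omega$ is an abstract metric space, so there is no Fourier transform available until one does this), bounds the slice energy by the anisotropic Fourier integral $\int|\mathbf{z}^{(2)}|^{\sigma-m-n}|\widehat{\Psi_{\bm{\lambda}\sharp}\mu}(\mathbf{z})|^2\,d\mathbf{z}$ (Lemma~\ref{lem:slice-energy}), decomposes that integral over dyadic cones $\mathbf{C}_i$, and reduces the dimension bound on the exceptional set to the Peres--Schlag power-series divergence lemma (Lemma~\ref{lem:power-series}) applied to certain functions $h_i(\bm{\lambda})$. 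The $\beta^{1/3}$ loss emerges from balancing the regularity order $L$ in that lemma against the $\beta$-losses in the $L^\infty$ derivative bounds on $h_i$.

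Your alternative---integrate against $d\sigma(\bm{\lambda})$ for a Frostman $\sigma$ on the exceptional set---works for \emph{projection} theorems because there the $\bm{\lambda}$-integrand is essentially a sublevel-set indicator $\chi_{\{|\Phi_{\bm{\lambda}}|\lesssim r\}}$, and transversality plus the Frostman bound on $\sigma$ controls its $\sigma$-measure. For slices, however, the smallness of $\|h_i\|_{L^1(K)}$ comes from \emph{cancellation}: Lemma~\ref{lem:technical} is an oscillatory-integral estimate proved by integration by parts in $\bm{\lambda}$, and that argument collapses once $d\bm{\lambda}$ is replaced by a singular measure $d\sigma$. You have not proposed a mechanism that survives this replacement, and I do not see one; the power-series lemma is exactly the bridge the paper uses to convert ``small $L^1(d\bm{\lambda})$ norm plus controlled $\partial_{\bm{\lambda}}^{\bm{\alpha}}$'' into a Hausdorff-dimension statement without ever integrating against a fractal measure. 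Absent either that lemma or a new idea replacing it, the sharpening from ``Lebesgue-null exceptional set'' to $\dim E \le N+m-s+a\beta^{1/3}$ is not justified in your outline.
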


Note that the dimension bound $N + m - s + \eps(\beta)$ on $E$, where $\eps(\beta) \to 0$ as $\beta \to 0$, is precisely the form that the corresponding projection result takes (see Theorem \ref{thm:peres-schlag} below). This will be important in \S \ref{s:main-proof}.

As a side note, since the ambient dimension $\hdim \Omega$ plays no role and the transversality and regularity of $(\Pi_{\bm{\lambda}})_{\bm{\lambda} \in U}$ are preserved under restriction of the domain $\Omega$, it would suffice to take $A = \Omega$.

\vspace{-0.25cm} \subsection{Overview and outline} The methods of this paper are already well-established in geometric measure theory. J\"{a}rvenp\"{a}\"{a}--J\"{a}rvenp\"{a}\"{a}--Niemel\"{a} \cite{jarvenpaa2004transversal} proved a nonlinear slicing theorem for mappings between manifolds using the same basic definitions as Peres and Schlag, but their methods were not Fourier-analytic and did not yield dimension bounds on the exceptional sets. Orponen \cite{orponen2014slicing} then proved Theorem \ref{thm:main} for $m = N = 1$ and $B = A$, and the proof of this generalization follows his closely. Mattila \cite{mattila2015fourier} also proved the $B = A$ case of Theorem \ref{thm:ortho} by importing some of Orponen's machinery into higher dimensions (for which reason he attributes the result to Orponen), and the obstacles in extending their slicing result for orthogonal projections to generalized projections are more practical than theoretical. Once the details are worked out, the strong aspect of Theorem \ref{thm:main} follows from a simple adaptation of the argument in \cite{falconer2016strong}.

After laying out the preliminaries in \S \ref{s:background}, we state several technical lemmas in \S \ref{ss:lemmas} that adapt the Peres--Schlag framework to slices. Most of the proof of Theorem \ref{thm:main} is contained in that of Lemma \ref{lem:slicing-main} in \S \ref{ss:main-lemma}, and in turn the main hurdle in proving this lemma is verifying the hypotheses of Lemma \ref{lem:power-series}. Since the proof of Lemma \ref{lem:slicing-main} is quite lengthy as is, Lemma \ref{lem:technical} is set aside to facilitate the application of Lemma \ref{lem:power-series}, and following Orponen's example we relegate the cumbersome proof of Lemma \ref{lem:technical} to the \hyperlink{s:appendix}{Appendix}. With the main Lemma \ref{lem:slicing-main} in place, the proof of Theorem \ref{thm:main} is completed in two steps: first in \S \ref{ss:weak} without the ``universal" exceptional set, which is then introduced in \S \ref{ss:full} using the method of Falconer and Mattila, thereby completing the proof of the theorem. Finally, \S \ref{s:applications} presents an application of Theorem \ref{thm:main} to vertical slices in the Heisenberg groups $\bbh^n$.

\section{Background} \label{s:background}

Given parameters $w_1,\dots,w_k$ and quantities $a,b \in \R \cup \{\pm\infty\}$, we write $a \lesssim_{w_1,\dots,w_k} b$ if there exists a positive constant $c_{w_1,\dots,w_k}$ (depending only on those parameters) such that $a \leq c_{w_1,\dots,w_k} b$. Similarly, $a \gtrsim_{w_1,\dots,w_k} b$ means that $b \lesssim_{w_1,\dots,w_k} a$ and $a \sim_{w_1,\dots,w_k} b$ means that both $a \lesssim_{w_1,\dots,w_k} b$ and $a \gtrsim_{w_1,\dots,w_k} b$. For readability, some parameters (e.g., the dimension of the space in which we are working) may be suppressed from the notation when doing so poses no risk of error or confusion. Although this is less standard, we will also on occasion write $a \simeq_{w_1,\dots,w_k} b$ when $a = c_{w_1,\dots,w_k} b$. This is stronger than $a \sim_{w_1,\dots,w_k} b$, which only says that $c_{w_1,\dots,w_k} a \leq b \leq d_{w_1,\dots,w_k} a$.

\vspace{-0.15cm} \subsection{Preliminaries on Radon measures} Denote by $\mathcal{M}(\Omega)$ the space of all nonzero \textit{Radon measures} on a separable metric space $(\Omega,d)$, i.e., the positive cone of Borel regular (outer) measures that are finite on compact sets, inner regular on open sets, and outer regular on all sets, but not identically $0$ (see \cite{mattila1995geometry}). The $s$-dimensional Hausdorff measure $\mathcal{H}^s$ is Borel regular, and it is Radon when restricted to a fixed subset of $\Omega$ with finite $\mathcal{H}^s$-measure.

A measure $\mu \in \mathcal{M}(\Omega)$ is called \textit{$s$-Frostman} if
\begin{equation*}
    \mu(B(\bm{\omega},r)) \lesssim_\mu r^s \qquad \text{for all } \bm{\omega} \in \Omega \text{ and } r > 0.
\end{equation*}
The following result, whose importance is difficult to understate, will be used frequently without explicit mention. This version for general metric spaces is due to Howroyd \cite{howroyd1995dimension}.
\begin{thm}[Frostman's lemma]
    If $\Omega$ is a $\sigma$-compact metric space with $\mathcal{H}^s(\Omega)$ $> 0$, then $\Omega$ supports an $s$-Frostman measure.
\end{thm}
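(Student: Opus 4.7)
The plan is to follow the classical mass-distribution strategy, adapted to a $\sigma$-compact metric space by replacing dyadic cubes with Voronoi partitions of nets obtained via compactness, then extracting the Frostman measure as a weak-$*$ limit of atomic approximants.

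First I would reduce to the compact case. Since $\Omega = \bigcup_n K_n$ with each $K_n$ compact, countable subadditivity of $\mathcal{H}^s$ yields some $K_n$ with $\mathcal{H}^s(K_n) > 0$. Any Radon measure on $K_n$ extends by zero to a Radon measure on $\Omega$, so it suffices to construct the Frostman measure on a single compact $K$ with $\mathcal{H}^s(K) > 0$. The essential consequence of this hypothesis is that the $s$-dimensional Hausdorff content $\mathcal{H}^s_\infty(K)$ is positive, and this is the quantity that will actually feed into the construction.

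Next, for each $k \geq 0$, I would select a maximal $2^{-k}$-separated net $N_k \subseteq K$ (finite by compactness) and form the associated Voronoi partition $\mathcal{Q}_k$, refining the previous partitions inductively so that $\mathcal{Q}_{k+1}$ subdivides $\mathcal{Q}_k$. Each cell $V_p \in \mathcal{Q}_k$ satisfies $B(p, 2^{-k-1}) \subseteq V_p \subseteq \overline{B}(p, 2^{-k})$. I would then build an atomic measure $\mu_k = \sum_{p \in N_k} a_p \delta_p$ by initializing $a_p = (2^{-k})^s$ and cascading top-down through $\mathcal{Q}_{k-1}, \ldots, \mathcal{Q}_0$: for each cell $Q \in \mathcal{Q}_j$ whose accumulated mass exceeds $(2^{-j})^s$, uniformly rescale the weights inside $Q$ so that $\mu_k(Q) = (2^{-j})^s$ exactly. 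By construction $\mu_k(Q) \leq (\operatorname{diam} Q)^s$ for every cell at every scale $j \leq k$, and a covering-lemma argument---the cells where rescaling occurred, together with the unrescaled leaves, form a finite cover of $K$ whose total $s$-content equals $\mu_k(K)$---gives the essential lower bound $\mu_k(K) \gtrsim \mathcal{H}^s_\infty(K)$.

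Finally, I would extract a weak-$*$ subsequential limit $\mu$ of $(\mu_k)$, available because $\mu_k(K)$ is uniformly bounded and $K$ is compact. Lower semicontinuity of $\mu \mapsto \mu(U)$ on open sets yields $\mu(K) \gtrsim \mathcal{H}^s_\infty(K) > 0$, so $\mu$ is a nonzero Radon measure (after zero-extension to $\Omega$). For the Frostman bound on a ball $B(\omega, r)$, I would pick the scale $j$ with $2^{-j} \leq r < 2^{-j+1}$, cover $B(\omega, r)$ by the Voronoi cells of $\mathcal{Q}_j$ that it meets, and sum the cell-wise bounds, transferring the estimate to $\mu$ via lower semicontinuity. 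The crux---and the step that requires the most care in a general, possibly non-doubling metric space---is controlling both the lower bound $\mu_k(K) \gtrsim \mathcal{H}^s_\infty(K)$ after the cascading renormalization and the overlap of scale-$j$ Voronoi cells with a ball of radius $\simeq 2^{-j}$; these are precisely the technical points addressed in Howroyd's original treatment.
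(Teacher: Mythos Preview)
The paper does not prove Frostman's lemma; it is quoted as background and attributed to Howroyd, so there is no in-paper argument to compare your proposal against.

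On the proposal itself: the reduction to a compact $K$, the cascading renormalization, and the content lower bound $\mu_k(K)\gtrsim\mathcal{H}^s_\infty(K)$ are all sound. The gap is the passage from the cell-wise bounds $\mu_k(Q)\le(\operatorname{diam} Q)^s$ to the Frostman ball bound $\mu(B(\omega,r))\lesssim r^s$. Your plan is to cover $B(\omega,r)$ by the scale-$j$ Voronoi cells it meets and sum; but the centers of those cells lie in $B(\omega,C\,2^{-j})$ and are $2^{-j}$-separated, and bounding their number uniformly is precisely a doubling hypothesis, which a general $\sigma$-compact metric space need not satisfy. You flag this issue yourself and then defer to ``Howroyd's original treatment,'' but that deferral is misplaced: Howroyd's proof does not use nets or Voronoi partitions at all. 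He introduces the \emph{weighted} Hausdorff content (covers carrying $[0,1]$-valued coefficients), proves its comparability with $\mathcal{H}^s_\infty$, and extracts the Frostman measure via a Hahn--Banach/duality argument on $C(K)$; the ball bound comes out of the duality directly, with no cell-counting step to rescue. As written, your argument would establish Frostman's lemma in doubling spaces but not in the generality stated.
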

A related tool for describing the scaling of a measure $\mu \in \mathcal{M}(\Omega)$ is the \textit{$s$-energy}
\begin{equation} \label{eq:s-energy}
    I_s(\mu) := \iint_{\Omega \times \Omega} \frac{d(\mu \times \mu)(\bm{\omega},\bm{\zeta})}{d(\bm{\omega},\bm{\zeta})^s}.
\end{equation}
If $\mu$ is finite and $s$-Frostman, then it has finite $t$-energy for all $0 \leq t < s$. Conversely, if $\mu$ is finite and $I_s(\mu) < \infty$, then there exists a positive-measure subset $B \subseteq \Omega$ such that $\mu \restrict B$, the restriction of $\mu$ to $B$, is $s$-Frostman.

A useful formula for the $s$-energy of a finite Borel measure $\mu$ on $\R^k$ is
\begin{equation} \label{eq:sobolev-energy}
    I_s(\mu) = \gamma_{n,s} \int_{\R^k} |\mathbf{z}|^{s-k} \big| \medhat{\mu}(\mathbf{z}) \big|^2 \, d\mathbf{z}, \qquad 0 < s \leq k,
\end{equation}
where $\medhat{\mu} \!: \R^k \to \C$ is the Fourier transform of the measure $\mu$ and $\gamma_{n,s} > 0$ is a constant. However, the right-hand side of this equation can in fact converge for \textit{any} value of $s \in \R$ (simply choose $\mu$ to be a Schwartz function), whereas the integral in \eqref{eq:s-energy} diverges whenever $s > k$. In lieu of the right-hand side of \eqref{eq:sobolev-energy}, one often studies instead the quantity
\begin{equation*}
    \mathcal{I}_s(\mu) := \int_{\R^k} (1+|\mathbf{z}|)^{s-k} \big| \medhat{\mu}(\mathbf{z}) \big|^2 \, d\mathbf{z},
\end{equation*}
the \textit{$s$-Sobolev energy} of $\mu$, which has the practical advantage of a bounded integrand. As long as $s > 0$, the two are comparable. Accordingly, the number
\begin{equation*}
    \sdim \mu := \sup \left\{ t \in \R \!: \mathcal{I}_t(\mu) < \infty \right\}
\end{equation*}
is called the \textit{Sobolev dimension} of $\mu$, and it coincides with $s$ whenever $\mu$ is $s$-Frostman and $0 \leq s < n$ (but not necessarily when $s = n$). Clearly $\mathcal{I}_t(\mu) < \infty$ for all $t < 0$, so $\sdim \mu \geq 0$.

\vspace{-0.15cm} \subsection{Pushforwards and slices of measures} If $\Psi \!: \Omega \to \R^k$ is a continuous map and $\mu \in \mathcal{M}(\Omega)$, then there is a Radon measure $\Psi_\sharp \mu \in \mathcal{M}(\R^k)$---the \textit{pushforward} of $\mu$ by $\Psi$---defined on Borel sets $X \subseteq \R^k$ by $\Psi_\sharp \mu(X) := \mu(\Psi^{-1}(X))$. The pushforward measure satisfies $\spt \Psi_\sharp \mu = \Psi(\spt \mu)$ and, for all Borel functions $g \!: \R^k \to [0,\infty]$,
\begin{equation*}
    \int_{\R^k} g(\mathbf{z}) \, d(\Psi_\sharp \mu)(\mathbf{z}) = \int_\Omega (g \circ \Psi)(\bm{\omega}) \, d\mu(\bm{\omega}).
\end{equation*}
From this formula one can derive the powerful identity
\begin{equation} \label{eq:plancherel-identity}
    \int_{\R^k} g(\mathbf{z}) \big| \widehat{\Psi_\sharp \mu}(\mathbf{z}) \big|^2 \, d\mathbf{z} = \iint_{\Omega \times \Omega} \medhat{g}(\Psi(\bm{\omega}) - \Psi(\bm{\zeta})) \, d(\mu \times \mu)(\bm{\omega},\bm{\zeta})
\end{equation}
using Plancherel's theorem. Incidentally, \eqref{eq:sobolev-energy} is a special case of this.

There is a straightforward procedure for defining the slices of a measure by the fibers of a continuous map by means of its pushforward under said map. However, we will later relocate our analysis from $\Omega$ to a Euclidean space in such a way that the slices of a measure $\mu \in \mathcal{M}(\Omega)$ correspond to the slices of its ``surrogate" in $\R^n$ by affine subspaces. (Incidentally, this surrogate is itself a pushforward measure---the pushforward of $\mu$ by the map $\Psi_{\bm{\lambda}}$ defined in \S \ref{ss:gen-proj} below.) For that reason, we need only to understand the slices of measures on $\R^n$ by the fibers of orthogonal projections.

To that end, for each $V \in \Gr(m+n,m)$, let $\nu_{\scalebox{0.65}{$V$}} := \pi_{\scalebox{0.65}{$\+V$}\sharp} \nu \in \mathcal{M}(\R^m)$ be the pushforward of the measure $\nu \in \mathcal{M}(\R^{m+n})$ by $\pi_{\scalebox{0.65}{$V$}}$, where we identify $V$ with $\R^m$ using local coordinates on $\Gr(m+n,m)$. For $\mathbf{x} \in \R^m$, we define the \textit{slice of $\nu$ by $\pi_{\scalebox{0.65}{$V$}}^{-1}(\mathbf{x})$}, denoted $\nu_{\scalebox{0.65}{$\+V$}\-,\mathbf{x}}$, via its action on functions $f \in C_c^+(\R^{m+n})$:
\begin{equation*}
    \int_{\R^{m+n}} f(\mathbf{z}) \, d\nu_{\scalebox{0.65}{$\+V$}\-,\mathbf{x}}(\mathbf{z}) := \lim_{\delta \to 0} \frac{1}{\mathcal{L}^m(B(\mathbf{x},\delta))} \int_{\pi_{\scalebox{0.5}{$V$}}^{-1}(B(\mathbf{x},\delta))} f(\mathbf{w}) \, d\nu(\mathbf{w}),
\end{equation*}
provided this limit exists. This is precisely the Radon--Nikodym derivative $D(\pi_{\scalebox{0.65}{$V$}\sharp}\nu_f,\mathbf{x})$ of the pushforward by $\pi_{\scalebox{0.65}{$V$}}$ of the Radon measure $\nu_f(A) := \int_A f(\mathbf{z}) \+ d\nu(\mathbf{z})$ on $\R^{m+n}$, so for a \textit{fixed} $f \in C_c^+(\R^{m+n})$, the limit exists for $\mathcal{L}^m$-a.e.\! $\mathbf{x} \in \R^m$. A standard argument from the separability of $C_c(\R^{m+n})$ then implies the existence of this limit for \textit{all} $f \in C_c^+(\R^{m+n})$, so the sliced measure $\nu_{\scalebox{0.65}{$\+V$}\-,\mathbf{x}}$ exists as a positive linear functional on $C_c^+(\R^{m+n})$ for almost every $\mathbf{x}$. By the Riesz representation theorem, it is an honest Radon measure.

\vspace{-0.15cm} \subsection{Bessel functions and spherical harmonics} Special functions make a brief appearance in the proof of the main lemma. All the facts utilized are ``well-known'' and they are unimportant in the broader scope of this chapter, so we only record only the very basics here. See \cite{muller2012analysis} for a full exposition.

For each $\nu > -\tfrac{1}{2}$, let $J_\nu \!: [0,\infty) \to \R$ be the \textit{Bessel function of the first kind of order $\nu$}:
\begin{equation*}
    J_\nu(u) := \frac{u^\nu}{2^\nu \+ \Gamma\big( \nu + \tfrac{1}{2} \big) \Gamma\big( \tfrac{1}{2} \big)} \int_{-1}^1 e^{\mathrm{i}ut} \big( 1-t^2 \big)^{\nu-1/2} \, dt,
\end{equation*}
where $\Gamma$ is the gamma function. It behaves roughly like $u^{-1/2} \cos(u-b)$ for some phase shift $b = b(\nu)$. The Bessel functions will only arise in the \textit{Hankel transform of order $\nu$}: for any $f \in L_{\mathrm{loc}}^1([0,\infty))$,
\begin{equation*}
    \mathrm{H}_\nu f(s) := \int_0^\infty f(r) J_\nu(sr) \+ r \, dr,
\end{equation*}
where the integral is understood in the limiting sense as an improper integral. The Hankel transform of a polynomial has an explicit expression that we shall state and use later.

A class of functions that ``play well'' with the Bessel functions are the spherical harmonics on $\bbs^{d-1}$---the restrictions to $\bbs^{d-1}$ of the homogeneous harmonic polynomials on $\R^d$---which will play the role of multidimensional Fourier series. More precisely, a function $Y_\ell \!: \bbs^{d-1} \to \C$ is called a \textit{spherical harmonic of degree $\ell$} if there exists a homogeneous polynomial $P \!: \R^d \to \C$ of degree $\ell \in \N$ satisfying $\Delta P = 0$ such that $P|_{\bbs^{d-1}} = Y_\ell$, where $\Delta$ is the Laplacian on $\R^d$. We shall fix an orthonormal basis $\bigcup_{\ell=0}^\infty \{ Y_{\ell,k} \}_{k=1}^{d(\ell)}$ for $L^2(\bbs^{d-1})$, where $d(\ell)$ is the (finite) dimension of the subspace of spherical harmonic of degree $\ell$.

\vspace{-0.25cm} \subsection{Generalized projections} \label{ss:gen-proj} We now turn to the requisite terminology from the Peres--Schlag framework for generalize projections. Let $N,m \in \Z_+$, let $\varnothing \neq U \subseteq \R^N$ be an open set, let $(\Omega,d)$ be a compact metric space, and let $\Pi \!: \Omega \times U \to \R^m$ be a continuous map. Writing $\Pi_{\bm{\lambda}} := \Pi(\,\cdot\,,\bm{\lambda})$, we consider $\Pi$ as a parametrized family $\big( \Pi_{\bm{\lambda}} \!: \Omega \to \R^m \big)_{\bm{\lambda} \in U}$ of mappings from $\Omega$ to $\R^m$.

\begin{defn} \label{defn:gen-proj}
    The family $(\Pi_{\bm{\lambda}})_{\bm{\lambda} \in U}$ is called a \define{family of generalized projections} if $\bm{\lambda} \mapsto \Pi_{\bm{\lambda}}(\bm{\omega})$ is $C^\infty$ for all $\bm{\omega} \in \Omega$ and the following condition is satisfied:
    \begin{enumerate}[topsep=0pt, itemsep=2pt]
        \item[{\normalfont\textbf{i.}}] For every compact set $K \subset U$ and every $\bm{\alpha} \in \N^N$, the inequality
        \begin{equation} \label{eq:bounded-derivs}
            \big| \partial_{\bm{\lambda}}^{\bm{\alpha}} \Pi_{\bm{\lambda}}(\bm{\omega}) \big| \lesssim_{K,|\bm{\alpha}|} 1
        \end{equation}
        holds for all $\bm{\lambda} \in K$ and $\bm{\omega} \in \Omega$.
    \end{enumerate}
    Supposing $(\Pi_{\bm{\lambda}})_{\bm{\lambda} \in U}$ is a family of generalized projections, let
    \begin{equation*}
        \Phi_{\bm{\lambda}}(\bm{\omega},\bm{\zeta}) := \frac{\Pi_{\bm{\lambda}}(\bm{\omega}) - \Pi_{\bm{\lambda}}(\bm{\zeta})}{d(\bm{\omega},\bm{\zeta})}
    \end{equation*}
    for $\bm{\omega} \neq \bm{\zeta}$ and $\Phi_{\bm{\lambda}}(\bm{\omega},\bm{\omega}) := \bm{0}$, and let $\beta \in [0,1)$.
    \begin{enumerate}[topsep=0pt, itemsep=2pt]
        \item[{\normalfont\textbf{ii.}}] The family is said to be \define{transversal of degree $\bm{\beta}$} if, for every compact $K \subset U$, every $\bm{\lambda} \in K$, and all $\bm{\omega} \neq \bm{\zeta}$,
        \begin{equation} \label{eq:transvers}
        \begin{aligned}
            & |\Phi_{\bm{\lambda}}(\bm{\omega},\bm{\zeta})| \lesssim_{\beta,K} d(\bm{\omega},\bm{\zeta})^\beta \\
            &\hspace{2cm} \Longrightarrow \quad \det\!\left[ D_{\bm{\lambda}} \Phi_{\bm{\lambda}}(\bm{\omega},\bm{\zeta}) \big( D_{\bm{\lambda}} \Phi_{\bm{\lambda}}(\bm{\omega},\bm{\zeta}) \big)^{\-\mathrm{t}} \right] \gtrsim_{\beta,K} d(\bm{\omega},\bm{\zeta})^{2\beta}.
        \end{aligned}
        \end{equation}
        \item[{\normalfont\textbf{iii.}}] The family is said to be \define{regular of degree $\bm{\beta}$} if, for every compact $K \subset U$, every $\bm{\lambda} \in K$, all $\bm{\omega} \neq \bm{\zeta}$, and all $\bm{\alpha} \in \N^N$,
        \begin{equation} \label{eq:reg}
        \begin{aligned}
            & |\Phi_{\bm{\lambda}}(\bm{\omega},\bm{\zeta})| \lesssim_{\beta,K} d(\bm{\omega},\bm{\zeta})^\beta \\
            &\hspace{2cm} \Longrightarrow \quad \big| \partial_{\bm{\lambda}}^{\bm{\alpha}} \Phi_{\bm{\lambda}}(\bm{\omega},\bm{\zeta}) \big| \lesssim_{K,\beta,|\bm{\alpha}|} d(\bm{\omega},\bm{\zeta})^{-\beta|\bm{\alpha}|}.
        \end{aligned}
        \end{equation}
        \item[{\normalfont\textbf{iv.}}] The family is said to be \define{strongly regular of degree $\bm{\beta}$} if, for every compact $K \subset U$, every $\bm{\lambda} \in K$, all $\bm{\omega} \neq \bm{\zeta}$, and all $\bm{\alpha} \in \N^N$,
        \begin{equation} \label{eq:strong-reg}
            \big| \partial_{\bm{\lambda}}^{\bm{\alpha}} \Phi_{\bm{\lambda}}(\bm{\omega},\bm{\zeta}) \big| \lesssim_{K,\beta,|\bm{\alpha}|} d(\bm{\omega},\bm{\zeta})^{-\beta|\bm{\alpha}|}
        \end{equation}
        (regardless of the size of $|\Phi_{\bm{\lambda}}(\bm{\omega},\bm{\zeta})|$).
    \end{enumerate}
\end{defn}

The main results of \cite{peres2000smoothness} concern the Sobolev dimension of projections (i.e., pushforwards) of Radon measures. Having a large projection in the direction $\bm{\lambda}$ is certainly requisite for a measure on $\Omega$ to have many large slices by $\Pi_{\bm{\lambda}}$, so we record (a portion of) one of the main Peres--Schlag theorems that bounds the exceptional sets of projections.

\begin{thm}[Peres--Schlag \cite{peres2000smoothness} Theorem 7.3] \label{thm:peres-schlag}
    There exists a constant $a_0 > 0$ depending only on $N$ and $m$ such that the following holds. Let $\big( \Pi_{\bm{\lambda}} \!: \Omega \to \R^m \big)_{\bm{\lambda} \in U}$ be a family of generalized projections that is both transversal and regular of some degree $\beta \in [0,1)$. If $\mu \in \mathcal{M}(\Omega)$ has finite $s$-energy for some $s > m$, then
    \begin{equation*}
        \hdim \big\{ \bm{\lambda} \in U \!: \Pi_{\bm{\lambda}\sharp}\mu \not\!\ll \mathcal{H}^r \big\} \leq N + r - \frac{s}{1 + a_0 \beta}
    \end{equation*}
    for all $r \in (0,m]$.
\end{thm}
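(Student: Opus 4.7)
The plan is to follow the Peres--Schlag Fourier-analytic strategy: first reduce absolute continuity with respect to $\mathcal{H}^r$ to a Sobolev-energy condition, then prove a dimension bound on the exceptional set by averaging the Sobolev energy against a Frostman measure on it and extracting decay from a sublevel-set estimate driven by transversality. For the reduction, use that $\sdim \nu > r$ implies $\nu \ll \mathcal{H}^r$ on $\R^m$ for $r \leq m$: when $r < t \leq m$, finite $\mathcal{I}_t(\nu) \sim I_t(\nu)$ excludes concentration on $\mathcal{H}^t$-null (hence $\mathcal{H}^r$-null) sets by a standard potential-theoretic argument, and when $t > m$ Plancherel yields an $L^2$ density on $\R^m$. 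Contrapositively, it suffices to establish, for every $t \in (r, s/(1+a_0\beta))$,
\begin{equation*}
    \hdim \big\{ \bm{\lambda} \in U : \mathcal{I}_t(\Pi_{\bm{\lambda}\sharp}\mu) = \infty \big\} \leq N + t - \tfrac{s}{1 + a_0 \beta},
\end{equation*}
and then let $t \downarrow r$.

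Fix such a $t$ and argue by contradiction: if the exceptional set had dimension strictly greater than $d_0 := N + t - s/(1+a_0\beta)$, Frostman's lemma would supply a probability measure $\sigma$ on it with $\sigma(B(\bm{\lambda},\rho)) \lesssim \rho^{d_0 + \eta}$ for some $\eta > 0$. Combining Fubini with the Plancherel identity \eqref{eq:plancherel-identity},
\begin{equation*}
    \int_U \mathcal{I}_t(\Pi_{\bm{\lambda}\sharp}\mu)\, d\sigma(\bm{\lambda}) = \iint_{\Omega \times \Omega} \bigg[ \int_U \widehat{g}_t\!\big( \Pi_{\bm{\lambda}}(\bm{\omega}) - \Pi_{\bm{\lambda}}(\bm{\zeta}) \big)\, d\sigma(\bm{\lambda}) \bigg]\, d(\mu \times \mu)(\bm{\omega},\bm{\zeta}),
\end{equation*}
where $g_t(\mathbf{z}) := (1+|\mathbf{z}|)^{t-m}$ and $\widehat{g}_t$ is a smooth radial convolution kernel behaving like $|\mathbf{y}|^{-t}$ near the origin for $t < m$ and bounded for $t > m$. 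A bound on the inner integral by a constant times $d(\bm{\omega},\bm{\zeta})^{-s}$ would bring the whole triple integral under control by $I_s(\mu) < \infty$, contradicting the definition of the exceptional set.

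The core technical step is therefore the sublevel-set bound on $\sigma\{\bm{\lambda} : |\Pi_{\bm{\lambda}}(\bm{\omega}) - \Pi_{\bm{\lambda}}(\bm{\zeta})| \leq \delta\}$. Decompose $U$ into the transversal regime $\{|\Phi_{\bm{\lambda}}(\bm{\omega},\bm{\zeta})| \lesssim d(\bm{\omega},\bm{\zeta})^\beta\}$ and its complement: in the latter, $|\Pi_{\bm{\lambda}}(\bm{\omega}) - \Pi_{\bm{\lambda}}(\bm{\zeta})| \gtrsim d(\bm{\omega},\bm{\zeta})^{1+\beta}$, so the kernel is already under control; in the former, \eqref{eq:transvers} together with the derivative bounds \eqref{eq:reg} allow one to invert $\bm{\lambda} \mapsto \Phi_{\bm{\lambda}}(\bm{\omega},\bm{\zeta})$ on cubes of scale $d(\bm{\omega},\bm{\zeta})^\beta$ via Taylor expansion and a change of variables, and the Frostman bound on $\sigma$ then estimates the mass of each preimage. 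Substituting the result into the distribution-function formula $\int \widehat{g}_t\, d\sigma \sim \int_0^\infty \sigma\{|\cdot| \leq \rho\}\, \rho^{-t-1}\, d\rho$ and optimizing confirms that $d_0 = N + t - s/(1+a_0\beta)$ is precisely the Frostman exponent at which the target $d(\bm{\omega},\bm{\zeta})^{-s}$ bound emerges. The main obstacle is this sublevel-set estimate itself: passing from the pointwise positive-Jacobian statement \eqref{eq:transvers} to a uniform $\sigma$-measure bound requires a delicate multi-scale Taylor/change-of-variables argument driven by \eqref{eq:reg}, and the degree $\beta$ enters quantitatively there to produce the loss factor $(1 + a_0 \beta)$ in the final exponent.
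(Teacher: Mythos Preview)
The paper does not prove this theorem; it is quoted verbatim from \cite{peres2000smoothness} as background input to the slicing argument (note the attribution ``Peres--Schlag \cite{peres2000smoothness} Theorem 7.3'' and the sentence preceding it: ``we record (a portion of) one of the main Peres--Schlag theorems''). There is therefore no proof in this paper to compare your proposal against.

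For what it is worth, your outline is in the right family of ideas but differs in architecture from what Peres and Schlag actually do. They do not place a Frostman measure on the exceptional set and argue by contradiction; instead they integrate in $\bm{\lambda}$ against Lebesgue measure (with a cutoff $\rho \in C_c^\infty(U)$), decompose the Sobolev energy dyadically, and feed the resulting sequence of functions into their power-series divergence lemma (stated here as Lemma~\ref{lem:power-series}) to bound the Hausdorff dimension of the divergence set directly. The oscillatory-integral/sublevel-set estimate driven by transversality and regularity---your ``main obstacle''---is their Lemma~7.10, the projection-side analogue of Lemma~\ref{lem:technical} in this paper. Your Frostman-on-the-exceptional-set route is a legitimate alternative strategy (going back to Kaufman for orthogonal projections), but making it work here would require proving the sublevel bound for an arbitrary $(d_0+\eta)$-Frostman measure $\sigma$ rather than for Lebesgue measure, and that is not what \cite{peres2000smoothness} establishes; their decay estimate is an $L^1(d\bm{\lambda})$ statement, which is why the power-series lemma, rather than a Frostman contradiction, is the engine of their proof.
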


An important family of objects in connection with the generalized projections, originally introduced in \cite{orponen2014slicing}, will be the mappings $\Psi_{\bm{\lambda}} \!: \Omega \to \R^{m+mN}$ defined by
\begin{equation} \label{eq:psi-lambda}
    \Psi_{\bm{\lambda}}(\bm{\omega}) := \big( \Pi_{\bm{\lambda}}(\bm{\omega}), D_{\bm{\lambda}} \Pi_{\bm{\lambda}}(\bm{\omega}) \big),
\end{equation}
where the total derivative $D_{\bm{\lambda}} \Pi_{\bm{\lambda}}(\bm{\omega})$ is identified with an element of $\R^{mN}$. In \cite{peres2000smoothness}, $\Pi_{\bm{\lambda}}$ is adequate to relocate the entire projection problem into Euclidean space: the transversality and regularity conditions black-box the fact that preimages of the sets $\Pi_{\bm{\lambda}}(A)$ belong to an abstract metric space. In the slicing problem, on the other hand, the objects of study are precisely those sets that collapse under the application of $\Pi_{\bm{\lambda}}$, so finding another ``natural" mapping of $\Omega$ to a Euclidean space is the first step in adapting the Peres--Schlag framework to the study of slices by means of Fourier analysis.

The maps $\Psi_{\bm{\lambda}}$ do the trick because the transversality condition \eqref{eq:transvers} requires that, modulo a loss inflicted by a positive $\beta$, the first partial derivatives of $\Phi_{\bm{\lambda}}$ pick up the information about pairs of points that $\Phi_{\bm{\lambda}}$ itself loses. It follows from \eqref{eq:transvers} that
\begin{equation*}
    |\Phi_{\bm{\lambda}}(\bm{\omega},\bm{\zeta})| \lesssim_{\beta,K} d(\bm{\omega},\bm{\zeta})^\beta \quad \Longrightarrow \quad \max_{1 \leq i \leq N} \big| \partial_{\lambda_i} \Phi_{\bm{\lambda}}(\bm{\omega},\bm{\zeta}) \big|_\infty \gtrsim_{\beta,K} d(\bm{\omega},\bm{\zeta})^\beta,
\end{equation*}
so packaging the partial derivatives of the components of $\Pi_{\bm{\lambda}}$ into $\Psi_{\bm{\lambda}}$ ensures that it always distinguishes between pairs of points in the same fiber of $\Pi_{\bm{\lambda}}$. Due to \eqref{eq:strong-reg}, $\Psi_{\bm{\lambda}}$ is sufficiently regular as not to stretch the fibers beyond recognition.

We can summarize the relevant properties of $\Psi_{\bm{\lambda}}$ as a single bi-H\"{o}lder condition
\begin{equation} \label{eq:bi-holder}
    d(\bm{\omega},\bm{\zeta})^\beta \lesssim_{\beta,K} \frac{|\Psi_{\bm{\lambda}}(\bm{\omega}) - \Psi_{\bm{\lambda}}(\bm{\zeta})|}{d(\bm{\omega},\bm{\zeta})} \lesssim_{\beta,K} d(\bm{\omega},\bm{\zeta})^{-\beta},
\end{equation}
valid for all $\bm{\lambda}$ belonging to a compact set $K \subset U$, where the lower and upper bounds owe themselves to the transversality and regularity of $\Pi_{\bm{\lambda}}$, respectively. Note that the upper H\"{o}lder bound would not follow if we only required that only \eqref{eq:reg} hold, as in \cite{peres2000smoothness}, rather than \eqref{eq:strong-reg}. However, an important step in the proof of Lemma \ref{lem:technical} requires that $|\Psi_{\bm{\lambda}}(\bm{\omega}) - \Psi_{\bm{\lambda}}(\bm{\zeta})|$ be small whenever $d(\bm{\omega},\bm{\zeta})$ is---not just the $|\Pi_{\bm{\lambda}}(\bm{\omega}) - \Pi_{\bm{\lambda}}(\bm{\zeta})|$ component. In particular, if it is possible to relax this condition, doing so would require a more nuanced strategy for ensuring that the derivative bounds in the hypotheses of Lemma \ref{lem:power-series} are satisfied.

\vspace{-0.25cm} \subsection{Notation and conventions} \label{ss:notation} Throughout, $(\Pi_{\bm{\lambda}})_{\bm{\lambda} \in U}$ denotes a family of generalized projections $\Pi_{\bm{\lambda}} \!: \Omega \to \R^m$, where $(\Omega,d)$ is a compact metric space and $\varnothing \neq U \subseteq \R^N$ is an open set. Typically $(\Pi_{\bm{\lambda}})_{\bm{\lambda} \in U}$ will be both transversal and strongly regular of the same order $\beta \geq 0$. Except in \S \ref{s:applications}, the integers $N,m \in \Z_+$ and $n := mN$ will remain fixed throughout.

\section{A few lemmas} \label{s:lemmas}

The purpose of this section is to begin the proof of Theorem \ref{thm:main} by establishing Lemma \ref{lem:slicing-main}. In broad strokes, the logic parallels that of \cite{peres2000smoothness} \S 7 on generalized projections in arbitrary dimension, but we closely follow the adaptations made by Orponen \cite{orponen2014slicing} for generalized slices. Lemma \ref{lem:slice-energy} is essentially the ``Fubini inequality" for sliced measures and does not have an explicit analogue in \cite{peres2000smoothness}, whereas Lemma \ref{lem:power-series} is taken from \cite{peres2000smoothness} Lemma 3.1 essentially unchanged. The last main ingredient, Lemma \ref{lem:technical}, requires proof but plays the role of \cite{peres2000smoothness} Lemma 7.10.

With $N,m \in \Z_+$ and $n := mN$ fixed as in \S \ref{ss:notation}, let $\mathbf{z} = (\mathbf{x},\mathbf{y}) \in \R^m \times \R^n \cong \R^{m+n}$, $\mathbf{z}^{(1)} := \mathbf{x} \in \R^m$, and $\mathbf{z}^{(2)} := \mathbf{y} \in \R^n$. For $\nu \in \mathcal{M}(\R^{m+n})$ and $\mathbf{x} \in \R^m$, we denote by $\nu_\mathbf{x}^{(1)} \in \mathcal{M}(\R^{m+n})$ the slice of $\nu$ by the coordinate map $(\mathbf{x}',\mathbf{y}') \mapsto \mathbf{x}'$ over the point $\mathbf{x}$.

\vspace{-0.25cm} \subsection{Technical lemmas} \label{ss:lemmas} Lemma \ref{lem:slicing-main} hinges on three results. The first is taken from Mattila \cite{mattila2015fourier}, where it is given a tedious but fairly straightforward proof. This will serve as our starting point in the proof of Lemma \ref{lem:slicing-main}.

\begin{lem}[Mattila \cite{mattila2015fourier} Proposition 6.3] \label{lem:slice-energy}
     For every $\nu \in \mathcal{M}(\R^{m+n})$ and every $m < \sigma < m+n$,
     \begin{equation*}
         \int_{\R^m} I_{\sigma-m}\big( \nu_\mathbf{x}^{(1)} \big) \, d\mathbf{x} \lesssim_{N,m,\sigma} \int_{\R^{m+n}} \big| \mathbf{z}^{(2)} \big|^{\sigma-m-n} \big| \medhat{\nu}(\mathbf{z}) \big|^2 \, d\mathbf{z}.
     \end{equation*}
\end{lem}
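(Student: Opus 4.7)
The plan is to apply the Fourier formula \eqref{eq:sobolev-energy} for the Riesz energy on $\R^n$ to each slice $\nu_\mathbf{x}^{(1)}$, viewed as a Radon measure on the $n$-dimensional fiber, and then recognize the resulting $L^2(\R^m)$-norm as a partial Fourier transform of $\nu$. Since $m < \sigma < m+n$ gives $0 < \sigma - m < n$, for each $\mathbf{x} \in \R^m$ at which the slice is defined I have
\[
    I_{\sigma-m}\bigl(\nu_\mathbf{x}^{(1)}\bigr) = \gamma_{n,\sigma-m} \int_{\R^n} |\xi|^{\sigma-m-n} \bigl| \widehat{\nu_\mathbf{x}^{(1)}}(\xi) \bigr|^2 d\xi,
\]
where $\widehat{\nu_\mathbf{x}^{(1)}}$ denotes the Fourier transform on $\R^n$. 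Integrating in $\mathbf{x}$ and swapping the order of integration by Fubini--Tonelli reduces the lemma to the pointwise-in-$\xi$ bound
\[
    \int_{\R^m} \bigl| \widehat{\nu_\mathbf{x}^{(1)}}(\xi) \bigr|^2 d\mathbf{x} \lesssim \int_{\R^m} \bigl| \widehat{\nu}(\eta,\xi) \bigr|^2 d\eta \qquad \text{for a.e. } \xi \in \R^n.
\]

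For this bound I would invoke the slice-Fubini identity built into the Radon--Nikodym definition of $\nu_\mathbf{x}^{(1)}$: for a Schwartz test function $\phi$ on $\R^m$,
\[
    \int_{\R^m} \phi(\mathbf{x}) \widehat{\nu_\mathbf{x}^{(1)}}(\xi) d\mathbf{x} = \int_{\R^{m+n}} \phi(\mathbf{x}) e^{-\mathrm{i}\xi\cdot\mathbf{y}} d\nu(\mathbf{x},\mathbf{y}).
\]
This identifies $\mathbf{x} \mapsto \widehat{\nu_\mathbf{x}^{(1)}}(\xi)$ as the partial Fourier transform of $\nu$ in the $\mathbf{y}$ variable, so its full Fourier transform in $\mathbf{x}$ is $\eta \mapsto \widehat{\nu}(\eta,\xi)$. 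Plancherel's theorem on $\R^m$ then yields the desired bound (in fact with equality).

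The main obstacle is justifying these manipulations for a general Radon measure $\nu$: the partial Fourier transform is a priori only a tempered distribution and the slice $\nu_\mathbf{x}^{(1)}$ is defined by a Radon--Nikodym limit. To make everything rigorous, I would mollify, replacing $\nu$ with $\nu_\epsilon := \nu * \phi_\epsilon$ for a Schwartz bump $\phi_\epsilon$; prove the inequality for the smooth $\nu_\epsilon$ by the classical argument just outlined; and pass to the limit $\epsilon \to 0^+$. The right-hand side for $\nu_\epsilon$ is dominated by the right-hand side for $\nu$ because $|\widehat{\phi_\epsilon}| \leq 1$. For the left-hand side, $(\nu_\epsilon)_\mathbf{x}^{(1)}$ converges weakly to $\nu_\mathbf{x}^{(1)}$ for a.e. $\mathbf{x}$, and the lower semicontinuity of the Riesz $(\sigma - m)$-energy under weak convergence combined with Fatou's lemma in $\mathbf{x}$ recovers the bound for $\nu$. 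This approximation step is the only delicate ingredient; everything else is Fubini and Plancherel.
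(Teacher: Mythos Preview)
The paper does not give its own proof of this lemma; it simply cites Mattila's book, describing the argument there as ``tedious but fairly straightforward.'' Your proposal is essentially that argument: apply the Fourier energy formula \eqref{eq:sobolev-energy} on the $n$-dimensional fiber, identify $\mathbf{x} \mapsto \widehat{\nu_\mathbf{x}^{(1)}}(\xi)$ with the partial Fourier transform of $\nu$ in the second block of variables, and invoke Plancherel on $\R^m$. The mollification-plus-Fatou step you sketch for passing from smooth approximants to a general Radon measure is likewise the standard way to justify these manipulations, so your outline matches the cited proof.
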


The second lemma---a result on the divergence of power series---is paraphrased from \cite{peres2000smoothness}, and it is one of the main pillars in their proofs of their generalized projection theorems. The primary difference between their statement and the one below is the parameter $L$, which they take to be $\infty$; however, as they remark at the beginning of \S 3.2, their proof implies this more general, lower-regularity statement.

\begin{lem}[Peres--Schlag \cite{peres2000smoothness} Lemma 3.1] \label{lem:power-series}
    Let $U \subseteq \R^N$ be an open set, $A,R>1$, and $L \in \Z_+ \cup \{\infty\}$, and suppose $(h_i)_{i=1}^\infty$ is a sequence in $C^L(U)$ satisfying
    \begin{equation} \label{eq:deriv-int-bounds}
        \big\| \partial_{\bm{\lambda}}^{\bm{\alpha}} h_i \big\|_{L^\infty(K)} \lesssim_{K,L} A^{i|\bm{\alpha}|} \quad \text{for all } |\bm{\alpha}| \leq L \quad \text{and} \quad \|h_i\|_{L^1(K)} \lesssim_K R^{-i}
    \end{equation}
    for all $i \in \N$ and every compact set $K \subset U$. If $1 \leq r < R$ and $0 < q < N$ are such that $A^q r^{q/L} \leq \tfrac{R}{r} \leq A^N r^{N/L}$, then
    \begin{equation*}
        \hdim \left\{ \bm{\lambda} \in U \!: \sum_{i=1}^\infty r^i |h_i(\bm{\lambda})| = \infty \right\} \leq N - q.
    \end{equation*}
\end{lem}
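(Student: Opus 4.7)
The approach is a Borel-Cantelli-type cover of the bad set at a sequence of scales $\delta_i \to 0$, where the $L^1$ hypothesis controls the measure of each level set and the derivative hypothesis constrains the admissible scale. By countable stability of Hausdorff dimension, fix a compact $K \subset U$; the goal is to show $\mathcal{H}^s(E \cap K) = 0$ for every $s > N - q$. The first reduction is the elementary divergent-series fact that $\sum r^i |h_i(\bm{\lambda})| = \infty$ together with $\sum \beta_i < \infty$ (for any positive $(\beta_i)$) forces $|h_i(\bm{\lambda})| \geq \beta_i r^{-i}$ for infinitely many $i$---else the tail would be majorized by $\sum \beta_i$. Taking $\beta_i := i^{-2}$ and $\tau_i := \beta_i r^{-i}$, this gives
\[
    E \cap K \;\subseteq\; \bigcap_{M \geq 1} \bigcup_{i \geq M} F_i, \qquad F_i := \{\bm{\lambda} \in K : |h_i(\bm{\lambda})| \geq \tau_i\}.
\]

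\textbf{Cover at scale $\delta_i$.} The main task is to cover each $F_i$ by $\lesssim R^{-i}/(\tau_i \delta_i^N)$ cubes of side $\delta_i := \tau_i^{1/L} A^{-i}$. Partition $K$ into essentially disjoint cubes $\{Q\}$ of side $\delta_i$ and Taylor-expand $h_i$ about the center of each $Q$ to degree $L-1$: by \eqref{eq:deriv-int-bounds} with $|\bm{\alpha}| = L$, the remainder is $\lesssim \delta_i^L A^{iL} = \tau_i$. If $Q$ meets $F_i$, then the Taylor polynomial $P_Q$ satisfies $\sup_Q |P_Q| \gtrsim \tau_i$, and a Remez-type inequality for degree-$(L-1)$ polynomials on $\R^N$ produces a subset $Q' \subseteq Q$ of Lebesgue measure $\gtrsim_{L,N} \delta_i^N$ on which $|P_Q| \gtrsim \tau_i$; absorbing the Taylor remainder yields $|h_i| \gtrsim \tau_i$ on $Q'$. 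Hence $\int_Q |h_i| \gtrsim \tau_i \delta_i^N$; summing over cubes meeting $F_i$ and invoking $\|h_i\|_{L^1(K)} \lesssim_K R^{-i}$ bounds their number by $\lesssim R^{-i}/(\tau_i \delta_i^N)$.

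\textbf{Summation.} The cover of $F_i$ has $s$-content $\lesssim R^{-i} \delta_i^{s-N}/\tau_i$. Setting $q := N - s$ and substituting $\tau_i = \beta_i r^{-i}$, $\delta_i = \tau_i^{1/L} A^{-i}$, elementary algebra produces
\[
    \mathcal{H}^s_\infty(F_i) \;\lesssim\; \beta_i^{-(q+L)/L} \cdot \bigl( A^q r^{q/L} \bigr)^i \cdot (r/R)^i.
\]
The hypothesis $A^q r^{q/L} \leq R/r$ forces the exponential factor to be $\leq 1$; for any $s' > s$ with $q' := N - s' < q$, the analogous estimate is strict, so $\sum_{i \geq M} \mathcal{H}^{s'}_\infty(F_i)$ is a geometric tail dominated by the subexponential prefactor $\sim i^{2(q'+L)/L}$. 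Consequently $\mathcal{H}^{s'}(E \cap K) = 0$, and letting $s' \searrow N - q$ gives $\dim(E \cap K) \leq N - q$. The auxiliary hypothesis $R/r \leq A^N r^{N/L}$ simply guarantees that the admissible range $q \leq N$ is nonempty, i.e., $N - q \geq 0$.

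\textbf{Main obstacle.} The crux is the Remez-type inequality for degree-$(L-1)$ polynomials on subsets of $\R^N$, which is what converts the naive first-derivative scale $\tau_i / A^i$ into the sharper $\tau_i^{1/L} A^{-i}$ and thereby introduces the $r^{q/L}$ factor appearing in the hypothesis. For $N = 1$ this is the classical Remez inequality; for $N > 1$ one invokes Brudnyi-Ganzburg or a direct volume-packing argument. Once this replacement is in hand, the rest of the argument is bookkeeping dictated by the exponent calculation in the summation step.
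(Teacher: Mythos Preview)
Your proposal is correct and follows the same strategy as the Peres--Schlag argument that the paper cites rather than reproduces: the Borel--Cantelli reduction to superlevel sets $F_i$, the choice of scale $\delta_i = \tau_i^{1/L} A^{-i}$ so that the degree-$L$ Taylor remainder matches the threshold $\tau_i$, the polynomial sublevel-set (Remez/Brudnyi--Ganzburg) estimate that promotes a one-point lower bound on each cube to a positive-proportion lower bound, and the resulting count of bad cubes via the $L^1$ hypothesis are exactly the ingredients of \cite{peres2000smoothness} Lemma~3.1. Your reading of the upper hypothesis $R/r \leq A^N r^{N/L}$ as a non-triviality constraint (ensuring $N - q \geq 0$) is also correct; only the lower inequality $A^q r^{q/L} \leq R/r$ enters the summation.
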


Verifying the hypotheses of Lemma \ref{lem:power-series} for certain functions $h_j$ to be defined later constitutes the greater part of the proof of Lemma \ref{lem:slicing-main} in \S \ref{ss:main-lemma} below. Both the derivative and the integral estimates in \eqref{eq:deriv-int-bounds} will demand extensive computations, but the integral bound in particular requires a statement that we break off here as a lemma of its own. Let $\eta \in C_c^\infty((0,\infty))$ be a bump function for some bounded interval. In addition, let $\eta^{(1)}(\mathbf{x}) := \eta(|\mathbf{x}|)$ for all $\mathbf{x} \in \R^m$ and $\eta^{(2)}(\mathbf{y}) := \eta(|\mathbf{y}|)$ for all $\mathbf{y} \in \R^n$.

\begin{lem} \label{lem:technical}
    There exists a constant $a_1 \geq 1$ (depending only on $N$ and $m$) such that, for all $\rho \in C_c^\infty(U)$, all $c \geq 0$, and all $\bm{\omega},\bm{\zeta} \in \Omega$,
    \begin{equation} \label{eq:technical}
    \begin{aligned}
        &\left| \+ \int_U \rho(\bm{\lambda}) \+ \widehat{\eta^{(1)}}\big( 2^j \+ d(\bm{\omega},\bm{\zeta}) \Phi_{\bm{\lambda}}(\bm{\omega},\bm{\zeta}) \big) \widehat{\eta^{(2)}}\big( 2^{j-i} \+ d(\bm{\omega},\bm{\zeta}) D_{\bm{\lambda}} \Phi_{\bm{\lambda}}(\bm{\omega},\bm{\zeta}) \big) \+ d\bm{\lambda} \+ \right| \\
        &\hspace{7.5cm} \lesssim_{\rho,c} \, \big( 1 + 2^j d(\bm{\omega},\bm{\zeta})^{1 + a_1 \beta} \big)^{\!-c}.
    \end{aligned}
    \end{equation}
\end{lem}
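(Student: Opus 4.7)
My strategy is a two-regime non-stationary-phase argument in $\bm{\lambda}$, governed by the size of $|\Phi_{\bm{\lambda}}(\bm{\omega},\bm{\zeta})|$ relative to $r^\beta$, where $r := d(\bm{\omega},\bm{\zeta})$. The trivial case $2^j r^{1+a_1\beta} \lesssim 1$ follows from $\|\widehat{\eta^{(k)}}\|_\infty \lesssim 1$ and $\rho \in L^1(U)$. Otherwise, fix a bump $\chi \in C_c^\infty(\R)$ with $\chi \equiv 1$ near the origin, and decompose the $\bm{\lambda}$-integrand by $\chi(|\Phi_{\bm{\lambda}}|^2/(Mr^{2\beta}))$ and its complement, where $M$ is the implicit constant in \eqref{eq:transvers}. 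On the \emph{non-transversal} piece $|\Phi_{\bm{\lambda}}| \gtrsim r^\beta$, the Schwartz decay of $\widehat{\eta^{(1)}}$ yields the pointwise bound $|\widehat{\eta^{(1)}}(2^j r\Phi_{\bm{\lambda}})| \lesssim_c (1+2^j r^{1+\beta})^{-c}$, and pairing with the bounded factor $\widehat{\eta^{(2)}}(2^{j-i} r D_{\bm{\lambda}}\Phi_{\bm{\lambda}})$ and integrating against $\rho$ already delivers \eqref{eq:technical} with any $a_1 \geq 1$ (since $r$ is bounded in $\Omega$).

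The \emph{transversal} piece requires the oscillatory machinery. There, \eqref{eq:transvers} combined with the upper bound $|D_{\bm{\lambda}}\Phi_{\bm{\lambda}}| \lesssim r^{-\beta}$ from strong regularity \eqref{eq:strong-reg} forces the smallest singular value of the $m \times N$ matrix $D_{\bm{\lambda}}\Phi_{\bm{\lambda}}$ to satisfy $\sigma_{\min}(D_{\bm{\lambda}}\Phi_{\bm{\lambda}}) \gtrsim r^{\beta m}$: dividing the determinant lower bound $\det[(D_{\bm{\lambda}}\Phi)(D_{\bm{\lambda}}\Phi)^{\mathrm{t}}] \gtrsim r^{2\beta}$ by the product of the $m-1$ larger eigenvalues (each $\lesssim r^{-2\beta}$) forces $\sigma_{\min}^2 \gtrsim r^{2\beta m}$. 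I would then unfold $\widehat{\eta^{(1)}}$ via Fourier inversion,
\begin{equation*}
    \widehat{\eta^{(1)}}(2^j r \Phi_{\bm{\lambda}}) = \int_{\R^m} \eta^{(1)}(\mathbf{x})\, e^{-\mathrm{i}\, 2^j r\, \mathbf{x}\cdot \Phi_{\bm{\lambda}}} \, d\mathbf{x},
\end{equation*}
and swap order of integration to rewrite the transversal piece as $\int_{\R^m} \eta^{(1)}(\mathbf{x})\, I(\mathbf{x})\, d\mathbf{x}$, where $I(\mathbf{x})$ is an oscillatory integral in $\bm{\lambda}$ with phase $\phi_{\mathbf{x}}(\bm{\lambda}) := 2^j r\, \mathbf{x}\cdot\Phi_{\bm{\lambda}}$. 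For $\mathbf{x} \in \spt\eta^{(1)}$ one has $|\mathbf{x}| \sim 1$ and $|\nabla_{\bm{\lambda}}\phi_{\mathbf{x}}| = 2^j r\, |(D_{\bm{\lambda}}\Phi)^{\mathrm{t}}\mathbf{x}| \gtrsim 2^j r^{1+\beta m}$, so repeated integration by parts against the dual vector field $V := (D_{\bm{\lambda}}\Phi)^{\mathrm{t}}\mathbf{x}/|(D_{\bm{\lambda}}\Phi)^{\mathrm{t}}\mathbf{x}|^2$ extracts $c$ powers of $(2^j r^{1+\beta m})^{-1}$, at the cost of differentiating the amplitude $\rho \cdot \chi(\,\cdot\,) \cdot \widehat{\eta^{(2)}}(2^{j-i} r D_{\bm{\lambda}}\Phi_{\bm{\lambda}})$ along with $V$ itself.

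Strong regularity controls each such derivative by a fixed polynomial in $r^{-\beta}$, and the Schwartz decay of $\widehat{\eta^{(2)}}$ combined with the lower bound $|D_{\bm{\lambda}}\Phi_{\bm{\lambda}}| \gtrsim r^{\beta m}$ absorbs the potentially large $2^{j-i}r$ factors produced by the chain rule. Aggregating, $c$ integrations by parts bound the transversal piece by $\lesssim_{\rho,c} (2^j r^{1+\beta m})^{-c}\, r^{-c\kappa\beta}$ for some constant $\kappa$ depending only on $N$ and $m$; for $r \leq 1$, this rearranges to $(2^j r^{1+\beta(m+\kappa)})^{-c}$, so one takes $a_1 := m+\kappa$. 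The \emph{main obstacle} is the inductive bookkeeping of this step: each integration by parts introduces additional factors of $r^{-\beta}$ from the derivatives of $V$, of the cutoff, and of the composition $\widehat{\eta^{(2)}}(2^{j-i} r D_{\bm{\lambda}}\Phi)$, and verifying uniform $i$-independent control of these contributions forces the full strength of \eqref{eq:strong-reg} rather than merely \eqref{eq:reg}, exactly as flagged in the discussion around \eqref{eq:bi-holder}. This is presumably why the detailed calculation is deferred to the appendix.
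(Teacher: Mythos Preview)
Your two-regime split and the treatment of the non-transversal piece match the paper exactly. On the transversal piece, however, your route and the paper's are genuinely different. The paper does \emph{not} do integration by parts in $\bm{\lambda}$: instead it invokes the Peres--Schlag implicit-function lemma (Lemma~\ref{lem:crit-pts}) to cover $\{|\Phi_{\bm{\lambda}}| \lesssim r^{(m+2)\beta}\}$ by $\lesssim r^{-(m+2)N\beta}$ balls on each of which, after freezing $N-m$ coordinates, $\bm{\lambda}' \mapsto \Phi_{(\bm{\lambda}',\bm{\lambda}'')}$ is a diffeomorphism with quantitative Jacobian bounds; it then changes variables $\mathbf{u} = \Phi_{\bm{\lambda}}$, Taylor-expands the resulting amplitude $g_{\bm{\lambda}''}(\mathbf{u})$ about $\mathbf{u}=\mathbf{0}$, and exploits that $\int \mathbf{u}^{\bm{\alpha}} \widehat{\eta^{(1)}}(2^j r\mathbf{u})\,d\mathbf{u} = 0$ for every $\bm{\alpha}$ (since $\eta$ vanishes near the origin). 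Your approach bypasses the implicit-function lemma entirely: you unfold $\widehat{\eta^{(1)}}$ by Fourier inversion and run non-stationary phase directly, using the singular-value lower bound $\sigma_{\min}(D_{\bm{\lambda}}\Phi) \gtrsim r^{\beta m}$ to force $|\nabla_{\bm{\lambda}}(\mathbf{x}\cdot\Phi_{\bm{\lambda}})| \gtrsim 2^j r^{1+\beta m}$ uniformly over $\mathbf{x} \in \spt\eta^{(1)}$.

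Both strategies are valid, and in fact both neutralize the $i$-dependent factor $\widehat{\eta^{(2)}}(2^{j-i}rD_{\bm{\lambda}}\Phi_{\bm{\lambda}})$ by the same mechanism you describe: the chain-rule factors $2^{j-i}r$ are cancelled against the Schwartz decay of $\widehat{\eta^{(2)}}$ when $2^{j-i}r|D_{\bm{\lambda}}\Phi| \gtrsim 1$, and are harmless when $2^{j-i}r \lesssim r^{-\beta m}$ since then $2^{j-i}r\,|\partial^2\Phi| \lesssim r^{-\beta(m+2)}$. Your approach is arguably more self-contained, as it avoids the ball-cover and the summation over $M \lesssim r^{-(m+2)N\beta}$ pieces; the paper's approach has the advantage of slotting directly into the established Peres--Schlag machinery and of making transparent why $\eta$ is required to vanish near $0$. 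Your identification of the ``main obstacle''---the inductive bookkeeping under iterated differentiation, forcing strong regularity rather than mere regularity---is exactly right and is the same obstruction the paper confronts (there via Fa\`{a} di Bruno and the derivative bounds for $F_{\bm{\lambda}''}^{-1}$).
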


This is the higher-dimensional analogue of Orponen \cite{orponen2014slicing} Lemma 4.2, which in turn is an adaptation of Peres--Schlag \cite{peres2000smoothness} Lemma 4.6. The higher-dimensional analogue of the latter also appears in \cite{peres2000smoothness} as Lemma 7.10, and the proof of Lemma \ref{lem:technical} is a lengthy but straightforward amalgam of Orponen's and Peres and Schlag's proofs. For this reason, the argument appears in the \hyperlink{s:appendix}{Appendix}.

\subsection{The main lemma} \label{ss:main-lemma} Recall the family of maps $\Psi_{\bm{\lambda}} \!: \Omega \to \R^m$ defined in \eqref{eq:psi-lambda}. For each $\mu \in \mathcal{M}(\Omega)$,
\begin{equation*}
    \mu_{\bm{\lambda},\mathbf{x}} := \big( \Psi_{\bm{\lambda}\sharp}\mu \big)_\mathbf{x}^{(1)}
\end{equation*}
is the slice of the pushforward measure $\Psi_{\bm{\lambda}\sharp}\mu \in \mathcal{M}(\R^{m+n})$ by the fiber of the coordinate map $(\mathbf{x},\mathbf{y}) \mapsto \mathbf{x}$ over the point $\mathbf{x} \in \R^m$.

\begin{lem} \label{lem:slicing-main}
    There exists a constant $a_2 \geq 1$, depending only on $N$ and $m$, such that the following holds. Let $(\Pi_{\bm{\lambda}})_{\bm{\lambda} \in U}$ be a family of generalized projections that is both transversal and strongly regular of order $\beta$. If $\mu \in \mathcal{M}(\Omega)$ is finite and has finite $(s + a_2 \beta^{1/3})$-energy for some $m < s < N + m$, then
    \begin{equation} \label{eq:exceptional-energy}
        \hdim \left\{ \bm{\lambda} \in U \!: \int_{\R^m} I_{s+\beta^{1/3}-m}(\mu_{\bm{\lambda},\mathbf{x}}) \, d\mathbf{x} = \infty \right\} \leq N + m - s.
    \end{equation}
\end{lem}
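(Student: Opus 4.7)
The strategy is to apply the Peres--Schlag power-series lemma (Lemma~\ref{lem:power-series}) to a sequence $(h_i)_{i \in \N}$ of functions $h_i \colon U \to [0,\infty)$ that bound the integrand in \eqref{eq:exceptional-energy} after Fourier-analytic decomposition. The starting point is Lemma~\ref{lem:slice-energy} applied with $\nu = \Psi_{\bm{\lambda}\sharp}\mu$ and $\sigma = s + \beta^{1/3}$; since $\mu_{\bm{\lambda},\mathbf{x}} = (\Psi_{\bm{\lambda}\sharp}\mu)^{(1)}_{\mathbf{x}}$ by definition, this gives pointwise in $\bm{\lambda}$
\begin{equation*}
    \int_{\R^m} I_{s+\beta^{1/3}-m}\big(\mu_{\bm{\lambda},\mathbf{x}}\big)\, d\mathbf{x} \lesssim \int_{\R^{m+n}} |\mathbf{y}|^{s+\beta^{1/3}-m-n} \big|\widehat{\Psi_{\bm{\lambda}\sharp}\mu}(\mathbf{x},\mathbf{y})\big|^2 \, d\mathbf{x}\, d\mathbf{y}.
\end{equation*}
Integrating against an arbitrary $\rho \in C_c^\infty(U)$ supported near a compact $K \subset U$, I would then Littlewood--Paley-decompose the weight $|\mathbf{y}|^{s+\beta^{1/3}-m-n}$ using smooth annular bumps $\eta^{(1)}$ on $\R^m$ and $\eta^{(2)}$ on $\R^n$ at independent dyadic scales $2^j$ and $2^{j-i}$, and apply the Plancherel identity \eqref{eq:plancherel-identity} on each piece. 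After absorbing the $\mathbf{y}$-weight and collapsing the $j$-parameter, one arrives at a bound of the form $\sum_i r^i \int_U \rho(\bm{\lambda}) h_i(\bm{\lambda})\, d\bm{\lambda}$, where each $h_i$ is a double integral against $\mu \times \mu$ whose integrand has exactly the shape appearing on the left-hand side of \eqref{eq:technical}.

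The main work is verifying the two hypotheses of Lemma~\ref{lem:power-series} for $(h_i)$. For the $L^1$ bound $\|h_i\|_{L^1(K)} \lesssim_K R^{-i}$, I would swap the order of integration so that the inner $\bm{\lambda}$-integral is precisely the quantity controlled by Lemma~\ref{lem:technical}, yielding a kernel dominated by $\big(1 + 2^j d(\bm{\omega},\bm{\zeta})^{1 + a_1 \beta}\big)^{-c}$; coupling this with the $(s + a_2 \beta^{1/3})$-energy hypothesis on $\mu$ converts the remaining $\mu \times \mu$-integral into geometric decay in $i$. For the derivative bound $\|\partial_{\bm{\lambda}}^{\bm{\alpha}} h_i\|_{L^\infty(K)} \lesssim_K A^{i|\bm{\alpha}|}$, I would differentiate under the $(\bm{\omega},\bm{\zeta})$-integral, exploiting the Schwartz decay of $\widehat{\eta^{(1)}}, \widehat{\eta^{(2)}}$ and the strong regularity \eqref{eq:strong-reg}, which bounds each $\bm{\lambda}$-derivative of $\Phi_{\bm{\lambda}}$ and $D_{\bm{\lambda}}\Phi_{\bm{\lambda}}$ by $d(\bm{\omega},\bm{\zeta})^{-\beta|\bm{\alpha}|}$; the dominant factor per derivative comes out to $A \asymp 2$ with a small $\beta$-correction.

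To conclude, I would choose the Peres--Schlag parameters so that $q = s - m$, making $N - q = N + m - s$; taking $L \sim \beta^{-1/3}$ and $r$ just below $R$ satisfies the pivotal inequality $A^q r^{q/L} \leq R/r \leq A^N r^{N/L}$, and the $a_2 \beta^{1/3}$-excess in the energy hypothesis on $\mu$ absorbs the $a_1 \beta$-loss inherited from Lemma~\ref{lem:technical} once this balance is struck. Lemma~\ref{lem:power-series} then yields $\hdim\{\bm{\lambda} \in U : \sum_i r^i |h_i(\bm{\lambda})| = \infty\} \leq N + m - s$, and since the exceptional set in \eqref{eq:exceptional-energy} is contained in a countable union of such sets (as $\rho$ exhausts $U$), the lemma follows. \textbf{The main obstacle} is the simultaneous balancing of three independent sources of $\beta$-loss---the $a_1 \beta$ correction from Lemma~\ref{lem:technical}, the bi-H\"{o}lder distortion of $\Psi_{\bm{\lambda}}$ recorded in \eqref{eq:bi-holder}, and the derivative cost in Lemma~\ref{lem:power-series}---whose joint optimization is exactly what forces the characteristic exponent $\beta^{1/3}$ and fixes the constant $a_2$.
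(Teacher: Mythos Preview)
Your plan is correct and tracks the paper's proof closely: both start from Lemma~\ref{lem:slice-energy} with $\sigma = s + \beta^{1/3}$, decompose the resulting Fourier integral into pieces indexed by $i$, feed these into Lemma~\ref{lem:power-series}, and use Lemma~\ref{lem:technical} for the $L^1$ bound. Your parameter choices ($q = s - m$, $L \sim \beta^{-1/3}$, $A \asymp 2$) match the paper's as well.

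The one substantive difference is in the $L^\infty$ derivative bound. The paper defines $h_i$ on the Fourier side via a \emph{cone} bump $\phi_i(\mathbf{z}) = \phi(2^{i+1}|\mathbf{z}^{(2)}|/|\mathbf{z}|)$ and then computes $\partial^{\bm{\alpha}}\widehat{g_i}$ explicitly through a spherical-harmonic expansion, extracting the crucial estimate $|\partial^{\bm{\alpha}}\widehat{g_i}(\mathbf{w})| \lesssim 2^{i(|\bm{\alpha}|-n)}|\mathbf{w}|^{-\sigma-|\bm{\alpha}|}$; the factor $2^{i|\bm{\alpha}|}$ is what produces $A = 2$. Your route---defining $h_i$ via product Littlewood--Paley pieces and differentiating $\widehat{\eta^{(1)}}(2^j r \Phi_{\bm{\lambda}})\widehat{\eta^{(2)}}(2^{j-i} r D_{\bm{\lambda}}\Phi_{\bm{\lambda}})$ directly---is viable and conceptually cleaner, but you gloss over a real issue: each $\bm{\lambda}$-derivative of the first factor brings down a $2^j$ (not $2^i$), and the $j$-sum inside your $h_i$ has no \textit{a priori} cutoff. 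You must reabsorb these $2^j$ factors using the Schwartz decay of $\widehat{\eta^{(1)}}$ together with the lower bi-H\"older bound \eqref{eq:bi-holder} on $|\Psi_{\bm{\lambda}}(\bm{\omega})-\Psi_{\bm{\lambda}}(\bm{\zeta})|$, after which the surviving growth is indeed $2^{i|\bm{\alpha}|}$ (coming from the $\eta^{(2)}$ factor). This works, but it is not as automatic as your sketch suggests, and carrying it out amounts to reproducing the paper's $\widehat{g_i}$ estimate by a different calculation. The paper's spherical-harmonic argument buys an explicit pointwise bound on $\widehat{g_i}$ at the cost of heavier machinery; your approach trades that for a more delicate summation in $j$.
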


\vspace*{-0.2cm}

\begin{proof}
With the notation and hypotheses of the lemma statement and $n := mN$ as before, let $t \geq s$ be such that $I_t(\mu) < \infty$ and let $\sigma := s + \beta^{1/3}$. By Lemma \ref{lem:slice-energy},
\begin{equation*}
    \int_{\R^m} I_{\sigma-m}(\mu_{\bm{\lambda},\mathbf{x}}) \, d\mathbf{x} \lesssim_{\beta,s} \int_{\R^{m+n}} \big| \mathbf{z}^{(2)} \big|^{\sigma-m-n} \big| \widehat{\Psi_{\bm{\lambda}\sharp}\mu}(\mathbf{z}) \big|^2 \, d\mathbf{z}
\end{equation*}
for all $\bm{\lambda} \in U$. Using a countable exhaustion of $U$ by compact sets, we may (and shall) restrict our attention from $U$ to a compact subset $K \subset U$. For each $i \in \N$, let
\begin{equation*}
    \mathbf{C}_i = \left\{ \mathbf{z} \in \R^{m+n} \!: \frac{1}{2^{i+1}} < \frac{\big|\mathbf{z}^{(2)}\big|}{|\mathbf{z}|} \leq \frac{1}{2^i} \right\}
\end{equation*}
be the set of points in $\R^{m+n}$ whose position vectors make an angle between $\cos^{-1}\!\big(2^{-i-1}\big)$ and $\cos^{-1}\!\big(2^{-i}\big)$ with the coordinate plane $\{0\}^m \times \R^n$. These partition $\R^{m+n}$ up to a set of Lebesgue measure $0$, so
\begin{equation} \label{eq:cone-partition}
\begin{aligned}
    & \int_{\R^{m+n}} \big| \mathbf{z}^{(2)} \big|^{\sigma-m-n} \big| \widehat{\Psi_{\bm{\lambda}\sharp}\mu}(\mathbf{z}) \big|^2 \, d\mathbf{z} = \sum_{i=0}^\infty \int_{\mathbf{C}_i} \big| \mathbf{z}^{(2)} \big|^{\sigma-m-n} \big| \widehat{\Psi_{\bm{\lambda}\sharp}\mu}(\mathbf{z}) \big|^2 \, d\mathbf{z} \\
    &\hspace{3cm} \sim_{\beta,s} \sum_{i=0}^\infty 2^{i(n+m-\sigma)} \int_{\mathbf{C}_i} |\mathbf{z}|^{\sigma-m-n} \big| \widehat{\Psi_{\bm{\lambda}\sharp}\mu}(\mathbf{z}) \big|^2 \, d\mathbf{z};
\end{aligned}
\end{equation}
the second line uses the fact that $\big|\mathbf{z}^{(2)}\big| \sim 2^{-i} |\mathbf{z}|$ on $\mathbf{C}_i$. Our goal is to determine how large $t$ must be if this series is to converge for all but a few $\bm{\lambda} \in K$; later, $t$ will take the form $s + a_2 \beta^{1/3}$ for some $a_2 \geq 1$ to be determined.

\textsc{Step 1.} We bound the second series in \eqref{eq:cone-partition} from above by a series of functions amenable to the application of Lemma \ref{lem:power-series}. But first, the $i = 0$ term warrants a separate and elementary treatment: the Plancherel identity \eqref{eq:plancherel-identity} and the transversality constraint \eqref{eq:bi-holder} together yield
\begin{align*}
    \int_{\mathbf{C}_0} |\mathbf{z}|^{\sigma-m-n} \big| \widehat{\Psi_{\bm{\lambda}\sharp}\mu}(\mathbf{z}) \big|^2 \, d\mathbf{z} &\leq \int_{\R^{m+n}} |\mathbf{z}|^{\sigma-m-n} \big| \widehat{\Psi_{\bm{\lambda}\sharp}\mu}(\mathbf{z}) \big|^2 \, d\mathbf{z} \\
    &\sim_{N,m,\beta,s} \iint_{\Omega \times \Omega} \frac{d(\mu \times \mu)(\bm{\omega},\bm{\zeta})}{\big| \Psi_{\bm{\lambda}}(\bm{\omega}) - \Psi_{\bm{\lambda}}(\bm{\zeta}) \big|^\sigma} \\
    &\lesssim_{\beta,K} \iint_{\Omega \times \Omega} \frac{d(\mu \times \mu)(\bm{\omega},\bm{\zeta})}{d(\bm{\omega},\bm{\zeta})^{(1+\beta)\sigma}} \\
    &= I_{(1+\beta)\sigma}(\mu).
\end{align*}
As $\mu$ has finite $t$-energy, this last integral is finite for all $\bm{\lambda} \in K$ provided that $\sigma \leq \tfrac{t}{1+\beta}$.

\vspace*{-0.05cm}

To bound the remaining terms in \eqref{eq:cone-partition}, we increase the regularity of the integrals using smooth bump functions $\phi_i \!: \R^{m+n} \to [0,1]$ satisfying
\begin{equation*}
    \chi_{\mathbf{C}_i} \leq \phi_i \leq \chi_{\mathbf{C}_{i-1} \cup \mathbf{C}_i \cup \mathbf{C}_{i+1}}.
\end{equation*}

\newpage

In particular, we set
\begin{equation*}
    \phi_i(\mathbf{z}) := \phi\big(2^{i+1}\big|\mathbf{z}^{(2)}\big|/|\mathbf{z}|\big), \quad \text{where} \quad \phi \in C_c^\infty\big( \big( \tfrac{1}{2}, \tfrac{5}{2} \big) \big)
\end{equation*}
is a smooth bump function for the interval $[1,2]$; thus,
\begin{equation*}
    \int_{\mathbf{C}_i} |\mathbf{z}|^{\sigma-m-n} \big| \widehat{\Psi_{\bm{\lambda}\sharp}\mu}(\mathbf{z}) \big|^2 \, d\mathbf{z} \leq \int_{\R^{m+n}} \phi_i(\mathbf{z}) |\mathbf{z}|^{\sigma-m-n} \big| \widehat{\Psi_{\bm{\lambda}\sharp}\mu}(\mathbf{z}) \big|^2 \, d\mathbf{z}
\end{equation*}
for all $i \geq 1$. In view of our aim of proving that the value in \eqref{eq:cone-partition} is finite for ``most" $\bm{\lambda} \in K$, our work is complete if we can show that
\begin{equation*}
    \hdim \left\{ \bm{\lambda} \in K \!: \sum_{i=1}^\infty 2^{i(2n+m-\sigma)} h_i(\bm{\lambda}) = \infty \right\} \leq N + m - s
\end{equation*}
for all sufficiently small $\beta \geq 0$, where
\begin{equation} \label{eq:h-i}
    h_i(\bm{\lambda}) := \frac{1}{2^{in}} \int_{\R^{m+n}} \phi_i(\mathbf{z}) |\mathbf{z}|^{\sigma-m-n} \big| \widehat{\Psi_{\bm{\lambda}\sharp}\mu}(\mathbf{z}) \big|^2 \, d\mathbf{z}.
\end{equation}
We do so by means of Lemma \ref{lem:power-series}, and the remainder of this proof consists primarily of showing that, for some $0 < p < n$, these $h_i$ satisfy \eqref{eq:deriv-int-bounds} with $A = 2$, $R = 2^{p+n}$, $q < s-m$, and some large $L \in \Z_+$ to be determined. As such, we must bound the $L^1$-norms of the $h_i|_K$ and the $L^\infty$-norms of their derivatives.

\textsc{Step 2.} We verify the $L^1$-bounds in \eqref{eq:deriv-int-bounds}. Let $A_{i,j} := \big\{ \mathbf{z} \in \spt \phi_i \!: 2^j \leq |\mathbf{z}| < 2^{j+1} \big\}$ for $j \in \Z$, so that
\begin{equation} \label{eq:dyadic-h-i}
    h_i(\bm{\lambda}) \sim \frac{1}{2^{in}} \sum_{j \in \Z} 2^{j(\sigma-m-n)} \int_{A_{i,j}} \big| \widehat{\Psi_{\bm{\lambda}\sharp}\mu}(\mathbf{z}) \big|^2 \, d\mathbf{z}
\end{equation}
by the definition of $h_i$ and the boundedness of the Fourier transform of a finite Borel measure. For a sufficiently large constant $b \geq 2$ depending only on $N$ and $m$, the set $A_{i,j}$ is contained in the product annulus
\begin{equation*}
    B_{i,j} := \left\{ \mathbf{z} \in \R^{m+n} \!: \tfrac{1}{b} 2^j < \big| \mathbf{z}^{(1)} \big| \leq b 2^j, \ \tfrac{1}{b} 2^{j-i} < \big| \mathbf{z}^{(2)} \big| \leq b 2^{j-i} \right\}.
\end{equation*}
If $\eta \!: \big( \tfrac{1}{b^2}, b^2 \big) \to [0,1]$ is a smooth bump function for the interval $\big[ \tfrac{1}{b}, b \big]$ and if $\eta_j^{(k)}\-(\mathbf{z}) := \eta\big( 2^{-j} \big| \mathbf{z}^{(k)} \big| \big)$ for $\mathbf{z} \in \R^{m+n}$, then $\mathbf{z} \mapsto \eta_j^{(1)}\-(\mathbf{z}) \eta_{j-i}^{(2)}\-(\mathbf{z})$ is a smooth bump function for $B_{i,j}$. It then follows from the Plancherel identity \eqref{eq:plancherel-identity}, the definition of $\Psi_{\bm{\lambda}}$, and the tensor product factorization of the Fourier transform that
\begin{equation} \label{eq:phi-on-cylinder}
\begin{aligned}
    & \int_{A_{i,j}} \big| \widehat{\Psi_{\bm{\lambda}\sharp}\mu}(\mathbf{z}) \big|^2 \, d\mathbf{z} \leq \int_{\R^{m+n}} \eta_j^{(1)}\-(\mathbf{z}) \eta_{j-i}^{(2)}(\mathbf{z}) \big| \widehat{\Psi_{\bm{\lambda}\sharp}\mu}(\mathbf{z}) \big|^2 \, d\mathbf{z} \\
    &\hspace{20pt} = \iint_{\Omega \times \Omega} \big( \eta_j^{(1)} \eta_{j-i}^{(2)} \big)\!\widehat{\raisebox{6pt}{ \ }}\big( \Psi_{\bm{\lambda}}(\bm{\omega}) - \Psi_{\bm{\lambda}}(\bm{\zeta}) \big) \+ d(\mu \times \mu)(\bm{\omega},\bm{\zeta}) \\
    &\hspace{20pt} = 2^{mj+n(j-i)} \iint_{\Omega \times \Omega} \widehat{\eta^{(1)}}\big( 2^j d(\bm{\omega},\bm{\zeta}) \Phi_{\bm{\lambda}}(\bm{\omega},\bm{\zeta}) \big) \\
    &\hspace{4.5cm} \cdot \widehat{\eta^{(2)}}\big( 2^{j-i} d(\bm{\omega},\bm{\zeta}) D_{\bm{\lambda}} \Phi_{\bm{\lambda}}(\bm{\omega},\bm{\zeta}) \big) \+ d(\mu \times \mu)(\bm{\omega},\bm{\zeta}).
\end{aligned}
\end{equation}
In the third line, $\eta^{(1)}(\mathbf{x}) := \eta_0^{(1)}(\mathbf{x},\mathbf{y})$ for any $\mathbf{y} \in \R^n$ and $\eta^{(2)}(\mathbf{y}) := \eta_0^{(2)}(\mathbf{x},\mathbf{y})$ for any $\mathbf{x} \in \R^m$ (note that $\eta_0^{(1)}$ is independent of $\mathbf{y}$ and $\eta_0^{(2)}$ is independent of $\mathbf{x}$), and the Fourier transforms thereof denote the $m$- and $n$-dimensional Fourier transforms, respectively, rather than the full $(m+n)$-dimensional Fourier transform.

Fix a value $0 < p < n$, to be chosen sufficiently close to $n$ much later in the proof. Lemma \ref{lem:technical}, \eqref{eq:dyadic-h-i}, and \eqref{eq:phi-on-cylinder} together imply that, whenever $\rho \in C_c^\infty(U)$ is a smooth bump function for $K$,
\begin{equation} \label{eq:int-h-i-estimate}
\begin{aligned}
    &2^{i(p+n)} \int_K h_i(\bm{\lambda}) \, d\bm{\lambda} \lesssim_\rho 2^{ip} \sum_{j \in \Z} 2^{j(\sigma-m-n)} \int_U \rho(\bm{\lambda}) \int_{A_{i,j}} \big| \widehat{\Psi_{\bm{\lambda}\sharp}\mu}(\mathbf{z}) \big|^2 \, d\mathbf{z} \+ d\bm{\lambda} \\
    &\qquad \lesssim_K 2^{ip} \sum_{j \in \Z} 2^{j(\sigma-m-n)} \cdot 2^{mj+n(j-i)} \\
    &\hspace{3cm} \cdot \iint_{\Omega \times \Omega} \bigg[ \int_U \rho(\bm{\lambda}) \widehat{\eta^{(1)}}\big( 2^j d(\bm{\omega},\bm{\zeta}) \Phi_{\bm{\lambda}}(\bm{\omega},\bm{\zeta}) \big) \\
    &\hspace{4cm} \cdot \widehat{\eta^{(2)}}\big( 2^{j-i} d(\bm{\omega},\bm{\zeta}) D_{\bm{\lambda}} \Phi_{\bm{\lambda}}(\bm{\omega},\bm{\zeta}) \big) d\bm{\lambda} \bigg] \+ d(\mu \times \mu)(\bm{\omega},\bm{\zeta}) \\
    &\qquad \lesssim 2^{i(p-n)} \sum_{j \in \Z} 2^{j\sigma} \iint_{\Omega \times \Omega} \big( 1 + 2^j d(\bm{\omega},\bm{\zeta})^{1 + a_1 \beta} \big)^{-c} \, d(\mu \times \mu)(\bm{\omega},\bm{\zeta}).
\end{aligned}
\end{equation}
(The implicit constant depends only on $K$ and not $\rho$ once a standard procedure for constructing bump functions chosen.) The series in $j$ converges provided $c > \sigma$, and in fact it is bounded above by $\sim_{\beta,s,c} I_{(1 + a_1 \beta)\sigma}(\mu)$. To see this, write
\begin{align*}
    \sum_{j \in \Z} \frac{2^{js}}{\big( 1 + 2^j d(\bm{\omega},\bm{\zeta})^{1 + a_1 \beta} \big)^c} &\sim d(\bm{\omega},\bm{\zeta})^{-(1 + a_1 \beta)\sigma} \left( \sum_{k=0}^\infty \frac{2^{ks}}{{(1 + 2^k)}^c} + \sum_{k = -\infty}^{-1} \frac{2^{ks}}{{(1 + 2^k)}^c} \right) \\
    &\leq d(\bm{\omega},\bm{\zeta})^{-(1 + a_1 \beta)\sigma} \left( \frac{1}{1 - 2^{\sigma-c}} + \frac{1}{1 - 2^{-\sigma}} \right),
\end{align*}
so that by \eqref{eq:int-h-i-estimate}
\begin{equation*}
    2^{i(p+n)} \int_K h_i(\bm{\lambda}) \, d\bm{\lambda} \lesssim_{\beta,K,s,c} 2^{i(p-n)} \iint_{\Omega \times \Omega} \frac{d(\mu \times \mu)(\bm{\omega},\bm{\zeta})}{d(\bm{\omega},\bm{\zeta})^{(1 + a_1 \beta)\sigma}} = 2^{i(p-n)} I_{(1 + a_1 \beta)\sigma}(\mu).
\end{equation*}
With (say) $c = N+m > \sigma$, the power series estimate
\begin{equation*}
    \sum_{i=1}^\infty 2^{i(p+n)} \| h_i \|_{L^1(K)} \lesssim_{N,m,\beta,K,s} \frac{1}{1 - 2^{p-n}} I_{(1 + a_1 \beta)\sigma}(\mu)
\end{equation*}
follows at once, so
\begin{equation*}
    \| h_i \|_{L^1(K)} \lesssim_{N,m,\beta,K,s,p} 2^{-i(p+n)}.
\end{equation*}
Note that this imposes the new constraint $\sigma \leq \tfrac{t}{1 + a_1 \beta}$, which is stronger than previously-established the constraint $\sigma \leq \tfrac{t}{1+\beta}$ since $a_1 \geq 1$.

\textsc{Step 3.} We verify the $L^\infty$-bounds \eqref{eq:deriv-int-bounds} on the partial derivatives $\partial_{\bm{\lambda}}^{\bm{\alpha}} h_i$. To do so, we rewrite the integral in \eqref{eq:h-i} using Plancherel's theorem and repeatedly differentiate under the integral sign. Plancherel's theorem tells us that
\begin{equation} \label{eq:h-i-omega}
    h_i(\bm{\lambda}) = \frac{1}{2^{in}} \iint_{\Omega \times \Omega} \widehat{g_i}\big( \Psi_{\bm{\lambda}}(\bm{\omega}) - \Psi_{\bm{\lambda}}(\bm{\zeta}) \big) \, d(\mu \times \mu)(\bm{\omega},\bm{\zeta}),
\end{equation}
where
\begin{equation*}
    g_i(\mathbf{z}) := |\mathbf{z}|^{\sigma-m-n} \phi_i(\mathbf{z}) = |\mathbf{z}|^{\sigma-m-n} \phi\big(2^{i+1}\big|\mathbf{z}^{(2)}\big|/|\mathbf{z}|\big).
\end{equation*}
Our goal, then, is to establish pointwise bounds on the partial derivatives of the function
\begin{equation} \label{eq:gamma-i}
    \gamma_i(\bm{\lambda}) := \widehat{g_i}\big( \Psi_{\bm{\lambda}}(\bm{\omega}) - \Psi_{\bm{\lambda}}(\bm{\zeta}) \big),
\end{equation}
where $\bm{\omega},\bm{\zeta} \in \Omega$ are fixed. To do this, we first consider $\widehat{g_i}$ itself. Write $\mathbf{z} = r\bm{\nu}$ in spherical coordinates, so that
\begin{equation} \label{eq:spherical-g-i-hat}
    \widehat{g_i}(\mathbf{w}) = \int_0^\infty r^{\sigma-1} \int_{\bbs^{m+n-1}} \phi_i(\bm{\nu}) e^{-2\pi\mathrm{i}r \bm{\nu} \cdot \mathbf{w}} \, d\mathcal{H}^{m+n-1}(\bm{\nu}) \+ dr.
\end{equation}
Since $\phi_i |_{\bbs^{m+n-1}}$ is $C^\infty$ (it is a smooth bump function for a ``band'' on the sphere of width $\sim 2^{-i}$), it has an absolutely convergent spherical harmonic expansion
\begin{equation*}
    \phi_i(\bm{\nu}) = \sum_{\ell=0}^\infty \sum_{k=1}^{d(\ell)} a_{i,\ell,k} Y_{\ell,k}(\bm{\nu})
\end{equation*}
with rapidly-decaying coefficients. In fact, since $\phi_i(\bm{\nu}) = \phi\big( 2^{i+1} \big| \bm{\nu}^{(2)} \big| \big)$, there exist $a_{\ell,k}$ such that $|a_{i,\ell,k}| \sim 2^{-in} |a_{k,\ell}|$ for all $i \in \Z_+$---a fact we shall use shortly.

Now, the identity
\begin{align*}
    & \int_{\bbs^{m+n-1}} Y_{\ell,k}(\bm{\nu}) e^{-2\pi \mathrm{i}r \bm{\nu} \cdot \mathbf{w}} \, d\mathcal{H}^{m+n-1}(\bm{\nu}) \\
    &\hspace{2.5cm} = (2\pi)^{(m+n-1)/2} (-i)^\ell \frac{J_{\ell+(m+n-2)/2}(2\pi r|\mathbf{w}|)}{(r|\mathbf{w}|)^{(m+n-2)/2}} Y_{\ell,k}\!\left( \frac{\mathbf{w}}{|\mathbf{w}|} \right)
\end{align*}
is standard, from which it follows that
\begin{align*}
    & \int_{\bbs^{m+n-1}} \phi_i(\bm{\nu}) e^{-2\pi\mathrm{i}r \bm{\nu} \cdot \mathbf{w}} \, d\mathcal{H}^{m+n-1}(\bm{\nu}) \\
    &\hspace{2.5cm} = (2\pi)^{(m+n-1)/2} \sum_{\ell=0}^\infty \sum_{k=1}^{d(\ell)} (-i)^\ell a_{i,\ell,k} \frac{J_{\ell+(m+n-2)/2}(2\pi r|\mathbf{w}|)}{(r|\mathbf{w}|)^{(m+n-2)/2}} Y_{\ell,k}\!\left( \frac{\mathbf{w}}{|\mathbf{w}|} \right).
\end{align*}
Plugging this back into \eqref{eq:spherical-g-i-hat} then yields
\begin{align*}
    & \widehat{g_i}(\mathbf{w}) \simeq_{m,N} \sum_{\ell=0}^\infty \sum_{k=1}^{d(\ell)} (-i)^\ell a_{i,\ell,k} \+ Y_{\ell,k}\!\left( \frac{\mathbf{w}}{|\mathbf{w}|} \right) \! \int_0^\infty r^{\sigma-1} \frac{J_{\ell+(m+n-2)/2}(2\pi r|\mathbf{w}|)}{(r|\mathbf{w}|)^{(m+n-2)/2}} \, dr \\
    &\quad = |\mathbf{w}|^{-(m+n-2)/2} \sum_{\ell=0}^\infty \sum_{k=1}^{d(\ell)} (-i)^\ell a_{i,\ell,k} \+ Y_{\ell,k}\!\left( \frac{\mathbf{w}}{|\mathbf{w}|} \right) \! \int_0^\infty r^{\sigma-(m+n)/2} J_{\ell+(m+n-2)/2}(2\pi r|\mathbf{w}|) \, dr.
\end{align*}
The integral in the second line is precisely the Hankel transform of order $\ell+(m+n-2)/2$ of the monomial $r^{\sigma-(m+n-2)/2}$, and it can be expressed simply in terms of the gamma function as
\begin{equation*}
    \frac{2^{\sigma-(m+n)/2}}{(2\pi|\mathbf{w}|)^{\sigma-(m+n+2)/2}} \frac{\Gamma\!\left( \frac{\ell+\sigma+1}{2} \right)}{\Gamma\!\left( \frac{\ell-\sigma+m+n}{2} \right)}.
\end{equation*}
Thus, we have shown that
\begin{equation*}
    \widehat{g_i}(\mathbf{w}) \simeq_{m,N,s,\beta} |\mathbf{w}|^{-\sigma} \sum_{\ell=0}^\infty \sum_{k=1}^{d(\ell)} (-i)^\ell a_{i,\ell,k} \+ Y_{\ell,k}\!\left( \frac{\mathbf{w}}{|\mathbf{w}|} \right) \frac{\Gamma\!\left( \frac{\ell+\sigma+1}{2} \right)}{\Gamma\!\left( \frac{\ell-\sigma+m+n}{2} \right)}.
\end{equation*}
Now consider $\partial^{\bm{\alpha}} \widehat{g_i}(\mathbf{w})$ for $\bm{\alpha} = \big( \bm{\alpha}^{(1)}, \bm{\alpha}^{(2)} \big) \in \N^m \times \N^n$. We first have the elementary estimate
\begin{equation*}
    \partial^{\bm{\alpha}} |\mathbf{w}|^{-\sigma} \lesssim_{|\bm{\alpha}|} \sigma^{|\bm{\alpha}|} |\mathbf{w}|^{-\sigma-|\bm{\alpha}|} \sim_{\beta,s,|\bm{\alpha}|} |\mathbf{w}|^{-\sigma-|\bm{\alpha}|}.
\end{equation*}
Denoting by $P_{\ell,k}$ an extension of $Y_{\ell,k}$ to a harmonic polynomial of degree $\ell$ on $\R^{m+n}$, so that $Y_{\ell,k}(\mathbf{w}/|\mathbf{w}|) = |\mathbf{w}|^{-\ell} P_{\ell,k}(\mathbf{w})$, we can similarly estimate the derivatives of the $Y_{\ell,k}$ factor:
\begin{align*}
    \bigg| \partial^{\bm{\alpha}} Y_{\ell,k}\!\left( \frac{\mathbf{w}}{|\mathbf{w}|} \right) \bigg| &\leq \sum_{|\bm{\gamma}| \leq |\bm{\alpha}|} \binom{|\bm{\alpha}|}{|\bm{\gamma}|} \big| \partial^{\bm{\gamma}} |\mathbf{w}|^{-\ell} \big| \big| \partial^{\bm{\alpha}-\bm{\gamma}} P_{\ell,k}(\mathbf{w}) \big| \\
    &\lesssim_{|\bm{\alpha}|} |\mathbf{w}|^{-\ell-|\bm{\alpha}|} \cdot \ell^{|\bm{\alpha}|} |\mathbf{w}|^{\ell} \| Y_{\ell,k} \|_{L^\infty(\bbs^{m+n-1})} \\
    &\lesssim \ell^{|\bm{\alpha}|+(m+n-2)/2} |\mathbf{w}|^{-|\bm{\alpha}|}.
\end{align*}
The bound $\| Y_{\ell,k} \|_{L^\infty(\bbs^{m+n-1})} \lesssim \ell^{(m+n-2)/2}$ arises from the Sturm--Liouville construction of the basis of spherical harmonics from the Laplacian on $\mathbb{S}^{m+n-1}$. Together, the above inequalities, the asymptotic formula
\begin{equation*}
    \frac{\Gamma(x+r)}{\Gamma(x)} \sim x^r
\end{equation*}
for all real $x,r \geq 0$, and the Leibniz rule give
\begin{align*}
    & \big| \partial^{\bm{\alpha}} \widehat{g_i}(\mathbf{w}) \big| \\
    &\quad \lesssim_{m,N,\beta,s} |\mathbf{w}|^{-\sigma-|\bm{\alpha}|} \sum_{\ell=0}^\infty \sum_{k=1}^{d(\ell)} |a_{i,\ell,k}| \ell^{|\bm{\alpha}|+(m+n-2)/2} |\mathbf{w}|^{-\ell} \- \left( \frac{\ell-\sigma+m+n}{2} \right)^{\!\sigma-(m+n-1)/2} \\
    &\quad \lesssim_{m,N} |\mathbf{w}|^{-\sigma-|\bm{\alpha}|} \sum_{\ell=0}^\infty \sum_{k=1}^{d(\ell)} |a_{i,\ell,k}| \+ \ell^{|\bm{\alpha}|+\sigma-1/2} |\mathbf{w}|^{-\ell} \\
    &\quad \lesssim |\mathbf{w}|^{-\sigma-|\bm{\alpha}|} \sum_{\ell=0}^\infty \sum_{k=1}^{d(\ell)} 2^{-in} |a_{\ell,k}| \+ \ell^{|\bm{\alpha}|+\sigma-1/2} |\mathbf{w}|^{-\ell} \\
    &\quad \lesssim 2^{i(|\bm{\alpha}|-n)} |\mathbf{w}|^{-\sigma-|\bm{\alpha}|},
\end{align*}
where in the final line we have used the fact that the coefficients $|a_{\ell,k}|$ are rapidly-decaying and, consequently, are $\lesssim_{\bm{\alpha}} \tfrac{1}{d(\ell)} \ell^{-|\bm{\alpha}|-\sigma-1/2} \sim_{m,N} \ell^{-|\bm{\alpha}|-\sigma-m-n+1/2}$. The factor $|\mathbf{w}|^{-\ell}$ can be eliminated independently of $\mathbf{w}$ bounded away from $\mathbf{0}$. In particular, evaluating at $\mathbf{w} = \Psi_{\bm{\lambda}}(\bm{\omega}) - \Psi_{\bm{\lambda}}(\bm{\zeta})$ gives, by \eqref{eq:bi-holder},
\begin{align*}
    \big| \partial_\mathbf{w}^{\bm{\alpha}} \widehat{g_i}\big(\Psi_{\bm{\lambda}}(\bm{\omega}) - \Psi_{\bm{\lambda}}(\bm{\zeta})\big) \big| &\lesssim_{\beta,s,|\bm{\alpha}|} 2^{i(|\bm{\alpha}|-n)} \big( 1 + d(\bm{\omega},\bm{\zeta})^{\beta+1} \big)^{-\sigma-|\bm{\alpha}|} \\
    &\leq 2^{i(|\bm{\alpha}|-n)} d(\bm{\omega},\bm{\zeta})^{-(1+\beta)(\sigma+|\bm{\alpha}|)}.
\end{align*}
Recall that our intention was to establish bounds on the partial derivatives of $\gamma_i$. It is clear that, for every multi-index $\bm{\alpha} \in \N^N$, the corresponding partial derivative $\partial_{\bm{\lambda}}^{\bm{\alpha}} \gamma_i(\bm{\lambda})$ is a finite sum of terms the form
\begin{equation*}
    \partial_\mathbf{w}^{\bm{\alpha_0}} \widehat{g_i}(\mathbf{w}) \, \partial_{\bm{\lambda}}^{\bm{\alpha_1}} \- \big[ \Psi_{1,{\bm{\lambda}}}(\bm{\omega}) - \Psi_{1,{\bm{\lambda}}}(\bm{\zeta}) \big] \cdots \, \partial_{\bm{\lambda}}^{\bm{\alpha_{m+n}}} \- \big[ \Psi_{m+n,{\bm{\lambda}}}(\bm{\omega}) - \Psi_{m+n,{\bm{\lambda}}}(\bm{\zeta}) \big] \+ \Big|_{\mathbf{w} = \Psi_{\bm{\lambda}}(\bm{\omega}) - \Psi_{\bm{\lambda}}(\bm{\zeta})} \, ,
\end{equation*}
where $\Psi_{j,\bm{\lambda}}$ is the $j$\textsuperscript{th} component of $\Psi_{\bm{\lambda}}$, each $\bm{\alpha_j} \in \N^{m+n}$ has length $|\bm{\alpha_j}| \leq |\bm{\alpha}|$, and (contrary to the usual convention) the $j$\textsuperscript{th} term is simply $1$ when $\bm{\alpha_j} = \mathbf{0}$. (See \cite{orponen2014slicing} Proposition 4.3.)

Therefore, to prove the desired estimates on the partial derivatives of $\gamma_i$, we need only to estimate the contributions of each of the $k+1$ factors. Having treated the first factor already, we turn our attention to $\partial_{\bm{\lambda}}^{\bm{\alpha_j}} \- \big[ \Psi_{j,\bm{\lambda}}(\bm{\omega}) - \Psi_{j,\bm{\lambda}}(\bm{\zeta}) \big]$. These follow readily from the regularity hypotheses on $\Pi_{\bm{\lambda}}$: if $m+1 \leq j \leq n$, then for some $1 \leq k \leq N$
\begin{align*}
    \big| \partial_{\bm{\lambda}}^{\bm{\alpha_j}} \- \big[ \Psi_{j,\bm{\lambda}}(\bm{\omega}) - \Psi_{j,\bm{\lambda}}(\bm{\zeta}) \big] \big| &= \big| \partial_{\bm{\lambda}}^{\bm{\alpha_j}} \big[ d(\bm{\omega},\bm{\zeta}) \+ \partial_{\lambda_k} \- \Phi_{\bm{\lambda}}(\bm{\omega},\bm{\zeta}) \big] \big| \\
    &= d(\bm{\omega},\bm{\zeta}) \big| \partial_{\bm{\lambda}}^{\bm{\alpha_j}} \partial_{\lambda_k} \- \Phi_{\bm{\lambda}}(\bm{\omega},\bm{\zeta}) \big| \\
    &\lesssim_{\beta,s,|\bm{\alpha}|} d(\bm{\omega},\bm{\zeta}) \cdot d(\bm{\omega},\bm{\zeta})^{-\beta(|\bm{\alpha_j}|+1)} \\
    &\lesssim d(\bm{\omega},\bm{\zeta})^{1-\beta(|\bm{\alpha}|+1)},
\end{align*}
and similarly when $1 \leq j \leq m$ except that $d(\bm{\omega},\bm{\zeta})^{1-\beta(|\bm{\alpha}|+1)}$ is replaced by the ``smaller'' (up to a multiplicative constant depending on $|\Omega|$) factor $d(\bm{\omega},\bm{\zeta})^{1-\beta|\bm{\alpha}|}$. There are $k = |\bm{\alpha_0}|$ such factors, which together with the factor $\partial_\mathbf{z}^{\bm{\alpha_0}} \widehat{g_i}(\mathbf{z})$ yields
\begin{align*}
    \big| \partial_{\bm{\lambda}}^{\bm{\alpha}} \gamma_i(\bm{\lambda}) \big| &\lesssim_{\beta,s,|\bm{\alpha}|} 2^{i(|\bm{\alpha_0}|-n)} d(\bm{\omega},\bm{\zeta})^{-(1+\beta)(\sigma+|\bm{\alpha_0}|)} \cdot d(\bm{\omega},\bm{\zeta})^{|\bm{\alpha_0}|(1-\beta(|\bm{\alpha}|+1))} \\
    &\lesssim_{|\bm{\alpha}|} 2^{i(|\bm{\alpha}|-n)} d(\bm{\omega},\bm{\zeta})^{-\sigma - \beta(|\bm{\alpha}|^2 + 2|\bm{\alpha}| + \sigma)}.
\end{align*}
On the other hand, by the basic uniform estimates on $\partial_{\bm{\lambda}}^{\bm{\alpha}} \Pi_{\bm{\lambda}}$ in \eqref{eq:bounded-derivs},
\begin{equation*}
    \big| \partial_{\bm{\lambda}}^{\bm{\alpha_j}} \- \big[ \Psi_{j,\bm{\lambda}}(\bm{\omega}) - \Psi_{j,\bm{\lambda}}(\bm{\zeta}) \big] \big| \leq \big| \partial_{\bm{\lambda}}^{\bm{\alpha}} \partial_{\lambda_k} \Pi_{\bm{\lambda}}(\bm{\omega}) \big| + \big| \partial_{\bm{\lambda}}^{\bm{\alpha}} \partial_{\lambda_k} \Pi_{\bm{\lambda}}(\bm{\zeta}) \big| \lesssim_{K,|\bm{\alpha}|,\beta} 1.
\end{equation*}
This gives rise to the bound
\begin{align*}
    \big| \partial_{\bm{\lambda}}^{\bm{\alpha}} \gamma_i(\bm{\lambda}) \big| &\lesssim_{\beta,K,|\bm{\alpha}|} 2^{i(|\bm{\alpha}|-n)} d(\bm{\omega},\bm{\zeta})^{-(1+\beta)(\sigma+|\bm{\alpha}|)},
\end{align*}
which improves upon the previous when $\beta > 0$ and $|\bm{\alpha}|+1 > \beta^{-1}$. Thus, with $\tau = \tau_{\beta,s,|\bm{\alpha}|} := \min \big\{ \sigma + \beta(|\bm{\alpha}|^2 + 2|\bm{\alpha}| + \sigma),  (1+\beta)(\sigma+|\bm{\alpha}|) \big\}$,
\begin{equation*}
    \big| \partial_{\bm{\lambda}}^{\bm{\alpha}} \gamma_i(\bm{\lambda}) \big| \lesssim_{\beta,K,s,|\bm{\alpha}|} 2^{i(|\bm{\alpha}|-n)} d(\bm{\omega},\bm{\zeta})^{-\tau}.
\end{equation*}
Recalling the definition of $\gamma_i$ in \eqref{eq:gamma-i}, we may now differentiate \eqref{eq:h-i-omega} under the integral sign and use this inequality to conclude that
\begin{equation*}
    \big| \partial_{\bm{\lambda}}^{\bm{\alpha}} h_i(\bm{\lambda}) \big| \lesssim \frac{2^{i(|\bm{\alpha}|-n)}}{2^{in}} \iint_{\Omega \times \Omega} \frac{d(\mu \times \mu)(\bm{\omega},\bm{\zeta})}{d(\bm{\omega},\bm{\zeta})^\tau} = 2^{i(|\bm{\alpha}|-2n)} I_\tau(\mu) \leq 2^{i|\bm{\alpha}|} I_\tau(\mu)
\end{equation*}
for all $\bm{\lambda} \in K$. The right-hand side is finite when $\tau \leq t$, which fact we shall use momentarily in deciding for which $L \in \Z_+$ Lemma \ref{lem:power-series} applies.

\textsc{Step 4.} Having established the bounds in \eqref{eq:deriv-int-bounds} on the functions $h_i$ defined in \eqref{eq:h-i}, we at last apply Lemma \ref{lem:power-series} to conclude the proof. The case $\beta = 0$ does not quite follow from the case $\beta > 0$, and since the latter is slightly more involved, we begin with the former. Let
\begin{equation*}
    0 < q <  s - m, \hspace{0.25cm} n - (s - m - q) < p < n, \hspace{0.25cm} A := 2, \hspace{0.25cm} R := 2^{p+n}, \hspace{0.25cm} \text{and} \hspace{0.25cm} r := 2^{2n+m-\sigma},
\end{equation*}
and let $L \in \Z_+$ be sufficiently large that
\begin{equation*}
    q + \left( \frac{q}{L} + 1 \right) (2n+m-\sigma) \leq p.
\end{equation*}
We must verify that $A^q r^{q/L} \leq \tfrac{R}{r} \leq A^N r^{N/L}$. Since all three expressions are powers of $2$, this is equivalent to
\begin{equation*}
    q + \left( \frac{q}{L} + 1 \right)(2n+m-\sigma) \leq p+n \leq N + \left( \frac{N}{L} + 1 \right)(2n+m-\sigma),
\end{equation*}
which is indeed consistent with the choices of parameters made above owing to the fact that $\sigma = s$:
\begin{align*}
    q + \left( \frac{q}{L} + 1 \right)(2n+m-\sigma) &= (n - (s-m-q)) + n + \frac{q}{L} (2n+m-s) \\
    &\leq p+n = N + (p+n-N) \leq N + (2n - (s-m)) \\
    &\leq N + \left( \frac{q}{L} + 1 \right)(2n+m-\sigma).
\end{align*}
The constraints on $t$ accumulated over the course of the proof are
\begin{equation} \label{eq:beta-constraints}
    \sigma \leq \frac{t}{1 + a_1 \beta} \quad \text{and} \quad \min \big\{ \sigma + \beta(L^2 + 2L + t), (1+\beta)(\sigma+L) \big\} \leq t,
\end{equation}
both of which hold trivially with $t = s$ when $\beta = 0$. Thus, our parameter choices satisfy the hypotheses of Lemma \ref{lem:power-series}, so by the results of Step 1, \eqref{eq:exceptional-energy} holds.

Now suppose instead that $0 < \beta < 1$ and let
\begin{equation*}
    q := s - m, \quad p := n - \frac{1}{2} \beta^{1/3}, \quad A := 2, \quad R := 2^{p+n}, \quad \text{and} \quad r := 2^{2n+m-\sigma}.
\end{equation*}
If $L \in \big[ \frac{4nN}{\beta^{1/3}}, \+ \frac{8nN}{\beta^{1/3}} \big] \cap \Z$, then the hypotheses of Lemma \ref{lem:power-series} are satisfied just as in the $\beta = 0$ case:
\begin{align*}
    q + \left( \frac{q}{L} + 1 \right)(2n + m - \sigma) &< \left( n - \frac{1}{2} \beta^{1/3} \right) + n - \frac{1}{2} \beta^{1/3} + \frac{s-m}{L} (2n + m - s - \beta^{1/3}) \\
    &< \left( n - \frac{1}{2} \beta^{1/3} \right) + n - \frac{1}{2} \beta^{1/3} + \frac{2nN}{L} \\
    &\leq p + n \\
    &= N + \left( 2n - N - \frac{1}{2} \beta^{1/3} \right) \\
    &\leq N + \left( \frac{N}{L} + 1 \right) (2n + m - \sigma).
\end{align*}
In addition, the first inequality in \eqref{eq:beta-constraints} is satisfied by any $t \geq s + b_0 \beta^{1/3}$ with $b_0 \geq a_1 N + a_1 + 1$, and the second inequality in \eqref{eq:beta-constraints} is satisfied by any $t \geq s + b_1 \beta^{1/3}$ with $b_1 \geq 64 n^2 N^2 + 16nN + N + 2$. Thus, \eqref{eq:exceptional-energy} holds in the $\beta > 0$ case with $a_2 := \max \, \{ b_0, b_1 \}$.
\end{proof}

Replacing $s$ with $s - a_2 \beta^{1/3}$---and increasing the constant by $1$ for the sake of symmetry in the two error terms---gives the following equivalent statement. 

\begin{cor} \label{cor:auxiliary}
    There exists a constant $a_3 \geq 1$, depending only on $N$ and $m$, such that the following holds. Let $(\Pi_{\bm{\lambda}})_{\bm{\lambda} \in U}$ be a family of generalized projections that is both transversal and strongly regular of order $\beta$. If $\mu \in \mathcal{M}(\Omega)$ is finite and has finite $s$-energy for some $m < s < N + m$, then
    \begin{equation*}
        \hdim \left\{ \bm{\lambda} \in U \!: \int_{\R^m} I_{s - m - a_3 \beta^{1/3}}(\mu_{\bm{\lambda},\mathbf{x}}) \, d\mathbf{x} = \infty \right\} \leq N + m - s + a_3 \beta^{1/3}.
    \end{equation*}
\end{cor}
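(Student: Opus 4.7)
The plan is to reduce directly to Lemma \ref{lem:slicing-main} by a single index shift. Set $a_3 := a_2 + 1$, where $a_2$ is the constant supplied by the lemma, and suppose $\mu$ satisfies the hypotheses of the corollary. The case $s \leq m + a_3 \beta^{1/3}$ is trivial, since then the claimed bound $N + m - s + a_3 \beta^{1/3} \geq N$ lets the exceptional set be taken to be all of $U$. I may therefore assume $\tilde s := s - a_3 \beta^{1/3} > m$, noting that $\tilde s < s < N + m$ automatically.

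Next, I would apply Lemma \ref{lem:slicing-main} to $\mu$ with $\tilde s$ playing the role of $s$. The required hypothesis is that $\mu$ have finite $(\tilde s + a_2 \beta^{1/3}) = (s - \beta^{1/3})$-energy, which is implied by the corollary's assumption of finite $s$-energy, as $I_{t_1}(\mu) \lesssim_{\mathrm{diam}\,\Omega} I_{t_2}(\mu)$ whenever $t_1 \leq t_2$ (valid since $\Omega$ is compact). The lemma then produces an exceptional set of Hausdorff dimension at most $N + m - \tilde s = N + m - s + a_3 \beta^{1/3}$, outside of which
\begin{equation*}
    \int_{\R^m} I_{\tilde s + \beta^{1/3} - m}(\mu_{\bm{\lambda},\mathbf{x}}) \, d\mathbf{x} < \infty,
\end{equation*}
and the energy exponent simplifies as $\tilde s + \beta^{1/3} - m = s - m - (a_3 - 1)\beta^{1/3} = s - m - a_2 \beta^{1/3}$.

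The corollary asks for the same dimension bound but with the strictly smaller exponent $s - m - a_3 \beta^{1/3}$ in place of $s - m - a_2 \beta^{1/3}$. By exhausting $U$ with compact sets $K \subset U$ and appealing to \eqref{eq:bounded-derivs}, the supports $\mathrm{spt}\,\mu_{\bm{\lambda},\mathbf{x}} \subseteq \Psi_{\bm{\lambda}}(\Omega)$ are uniformly bounded for $\bm{\lambda} \in K$; the pointwise comparison $I_{s - m - a_3 \beta^{1/3}}(\mu_{\bm{\lambda},\mathbf{x}}) \lesssim_K I_{s - m - a_2 \beta^{1/3}}(\mu_{\bm{\lambda},\mathbf{x}})$ then integrates up to an inequality between the corresponding $\int_{\R^m}$ integrals. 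Hence the exceptional set of the corollary is contained in that of the lemma and inherits its dimension bound. No step is substantive; the only point requiring mild care is that the two sources of $\beta^{1/3}$-loss (the $a_2 \beta^{1/3}$ in the energy hypothesis of the lemma and the $\beta^{1/3}$ in its integrand exponent) combine correctly after the substitution, which is precisely why the constant is bumped up by one to produce the symmetric form advertised.
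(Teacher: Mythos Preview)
Your proof is correct and follows essentially the same approach as the paper: the paper simply remarks that the corollary follows from Lemma~\ref{lem:slicing-main} by replacing $s$ with $s - a_2\beta^{1/3}$ and increasing the constant by $1$ for symmetry, which is precisely the substitution-and-energy-comparison argument you carry out in detail. Your handling of the trivial range $s \le m + a_3\beta^{1/3}$ and the diameter-based energy comparison are the right supporting observations.
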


Although this statement is perhaps easier to read, a direct application of Lemma \ref{lem:slicing-main} is more convenient for the purposes of proving the main results in the next section.

\newpage

\section{Proof of Theorem \texorpdfstring{\ref{thm:main}}{1.2}} \label{s:main-proof}

\subsection{The \texorpdfstring{``weak"}{"weak"} version} \label{ss:weak} First we prove a version of Theorem \ref{thm:main} in which the set $E \subset U$ contains the exceptional parameters for a fixed $A \subseteq \Omega$ but not necessarily for all the positive-$\mathcal{H}^s$-measure subsets of $A$. As a result, though, the $0 < \mathcal{H}^s(A) < \infty$ hypothesis can be relaxed to $\hdim A \geq s$ when $\beta > 0$.

\begin{thm} \label{thm:weak-main}
    There exists a constant $a \geq 1$, depending only on $N$ and $m$, such that the following holds. Let $\big( \Pi_{\bm{\lambda}} \!: \Omega \to \R^m \big)_{\bm{\lambda} \in U}$ be a family of generalized projections that is both transversal and strongly regular of degree $\beta \in [0,1)$. If $m < s < N+m$, then for every Borel set $A \subseteq \Omega$ with $\mathcal{H}^s(A) > 0$, there exists a set $F \subset U$ with
    \begin{equation} \label{eq:weak-main}
        \hdim F \leq N + m - s + a \beta^{1/3}
    \end{equation}
    such that, for every $\bm{\lambda} \in U \setminus F$, the inequality
    \begin{equation*}
        \hdim\!\big(A \cap \Pi_{\bm{\lambda}}^{-1}(\mathbf{x})\big) \geq (1-\beta) (s-m)
    \end{equation*}
    holds for a positive-Lebesgue-measure set of $\mathbf{x} \in \R^m$. Moreover, if $\beta > 0$, the hypothesis $\mathcal{H}^s(A) > 0$ can be relaxed to $\hdim A \geq s$.
\end{thm}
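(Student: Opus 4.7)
The plan is to combine Corollary \ref{cor:auxiliary} with the Peres--Schlag projection theorem (Theorem \ref{thm:peres-schlag}) and the upper half of the bi-H\"older inequality \eqref{eq:bi-holder}, transferring dimension estimates on the sliced pushforward measures $\mu_{\bm{\lambda},\mathbf{x}} \in \mathcal{M}(\R^{m+n})$ back to the actual fibers $A \cap \Pi_{\bm{\lambda}}^{-1}(\mathbf{x})$ inside $\Omega$.

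First I would extract a Frostman measure $\mu$ supported on $A$. In the case $\mathcal{H}^s(A) > 0$, pass to a Borel subset $A' \subseteq A$ with $0 < \mathcal{H}^s(A') < \infty$ and set $\mu := \mathcal{H}^s \restrict A'$; in the relaxed case $\hdim A \geq s$ with $\beta > 0$, apply Frostman's lemma directly to obtain a $\sigma$-Frostman $\mu$ with $\sigma$ close to $\hdim A$. In either case, $\mu$ has finite $t$-energy for every $t$ slightly below $\hdim A$.

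Fixing $t$ close to $s$, I would next apply Corollary \ref{cor:auxiliary} to obtain a first exceptional set $F_1 \subset U$ with $\hdim F_1 \leq N + m - t + a_3 \beta^{1/3}$ outside which $\mu_{\bm{\lambda},\mathbf{x}}$ has finite $(t - m - a_3 \beta^{1/3})$-energy for Lebesgue-a.e.\ $\mathbf{x}$. In parallel, I would apply Theorem \ref{thm:peres-schlag} with $r = m$ to obtain a second exceptional set $F_2 \subset U$ of dimension at most $N + m - t/(1 + a_0 \beta)$ outside which $\Pi_{\bm{\lambda}\sharp}\mu \ll \mathcal{L}^m$; this last condition ensures that $\mu_{\bm{\lambda},\mathbf{x}}$ is nonzero on a positive-Lebesgue-measure set of $\mathbf{x} \in \R^m$. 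Taking $F := F_1 \cup F_2$, we have $\hdim F \leq N + m - s + a \beta^{1/3}$ for $t$ close enough to $s$ and $a$ large enough in terms of $a_0$ and $a_3$.

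The transfer to $\Omega$ then proceeds as follows. By the first-coordinate structure of $\Psi_{\bm{\lambda}}$ in \eqref{eq:psi-lambda}, $\spt \mu_{\bm{\lambda},\mathbf{x}} \subseteq \Psi_{\bm{\lambda}}\bigl(A \cap \Pi_{\bm{\lambda}}^{-1}(\mathbf{x})\bigr)$ for every $\mathbf{x}$. The upper inequality in \eqref{eq:bi-holder} makes $\Psi_{\bm{\lambda}}$ H\"older of exponent $1 - \beta$ on $\Omega$, so $\hdim \Psi_{\bm{\lambda}}(E) \leq (1-\beta)^{-1} \hdim E$ for every $E \subseteq \Omega$. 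For $\bm{\lambda} \in U \setminus F$ and $\mathbf{x}$ in the positive-measure set carved out above,
\[
\hdim\bigl(A \cap \Pi_{\bm{\lambda}}^{-1}(\mathbf{x})\bigr) \,\geq\, (1-\beta)\, \hdim \spt \mu_{\bm{\lambda},\mathbf{x}} \,\geq\, (1-\beta)(t - m - a_3 \beta^{1/3}).
\]
Taking $t$ close to $s$ and absorbing the residual $O(\beta^{1/3})$ losses into the constant $a$ in \eqref{eq:weak-main} (for the $\beta > 0$ case) and into the $(1-\beta)$ factor yields the claimed conclusion; in the $\beta = 0$ case, the analogous argument with a countable exhaustion $t_k \nearrow s$ delivers the cleaner bound $\hdim \geq s-m$ with $\hdim F \leq N + m - s$.

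The main technical work is already handled upstream by Lemma \ref{lem:slicing-main} (and, under the hood, Lemma \ref{lem:technical}). What remains here is essentially bookkeeping: identifying $\spt \mu_{\bm{\lambda},\mathbf{x}}$ as a subset of the $\Psi_{\bm{\lambda}}$-image of the $\Omega$-slice (a routine consequence of the pushforward-slice definitions), coordinating Corollary \ref{cor:auxiliary} with Theorem \ref{thm:peres-schlag} so as to secure both finite slice energy and nontrivial slices simultaneously, and tracking the small losses arising from the H\"older upper bound and the Frostman-exponent gap.
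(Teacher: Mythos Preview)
Your outline matches the paper's proof almost step for step: Frostman measure on $A$, the Peres--Schlag projection theorem to guarantee $\Pi_{\bm{\lambda}\sharp}\mu \ll \mathcal{L}^m$ (hence nontrivial slices on a positive-measure set of $\mathbf{x}$), the main slicing lemma to control slice energies, and the bi-H\"older bound \eqref{eq:bi-holder} to transfer from $\spt \mu_{\bm{\lambda},\mathbf{x}} \subseteq \Psi_{\bm{\lambda}}\big(A \cap \Pi_{\bm{\lambda}}^{-1}(\mathbf{x})\big)$ back to the $\Omega$-fiber. The $\beta=0$ case with its exhaustion $t_k \nearrow s$ is exactly what the paper does.

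The one point where you diverge is the choice of Corollary~\ref{cor:auxiliary} over Lemma~\ref{lem:slicing-main}, and this is precisely where the paper remarks that the lemma is ``more convenient.'' The reason: Corollary~\ref{cor:auxiliary} outputs finite $(t-m-a_3\beta^{1/3})$-energy for the slices, so after the H\"older transfer you land at $(1-\beta)(t-m-a_3\beta^{1/3})$, which is strictly below the target $(1-\beta)(s-m)$ even after sending $t \to s$. Your proposed fix---absorbing the residual $a_3\beta^{1/3}$ into the constant $a$ or ``into the $(1-\beta)$ factor''---does not work, because $a$ only appears in the bound on $\dim F$, and the $(1-\beta)(s-m)$ lower bound on fiber dimension is fixed by the theorem statement with no free constant to absorb into. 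The paper instead applies Lemma~\ref{lem:slicing-main} directly, whose conclusion carries the energy index $s+\beta^{1/3}-m > s-m$; this overshoot is what delivers $\dim \spt \mu_{\bm{\lambda},\mathbf{x}} \geq s-m$ cleanly and hence the stated $(1-\beta)(s-m)$ after transfer. Swapping the corollary for the lemma (and adjusting the energy hypothesis on $\mu$ accordingly) repairs this; otherwise your argument is the paper's.
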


\begin{proof}
We begin with the case $\beta = 0$. With $A \subseteq \Omega$ as above, there exists an $s$-Frostman measure $\mu \in \mathcal{M}(A)$, so $I_\sigma(\mu) < \infty$ for all $\sigma < s$. Thus,
\begin{equation*}
    \hdim \big\{ \bm{\lambda} \in U \!: \Pi_{\bm{\lambda}\sharp}\mu \not\!\ll \mathcal{L}^m \big\} \leq N + m - \sup_{\sigma < s} \sigma = N + m - s
\end{equation*}
by Theorem \ref{thm:peres-schlag}, which together with Lemma \ref{lem:slicing-main} implies that
\begin{equation*}
    \hdim \underbrace{\left\{ \bm{\lambda} \in U \!: \int_{\R^m} I_{\sigma-m}(\mu_{\bm{\lambda},\mathbf{x}}) \, d\mathbf{x} = \infty \ \forall \+ s > \sigma > s - \eps \text{ or } \Pi_{\bm{\lambda}\sharp}\mu \not\!\ll \mathcal{L}^m \right\}}_{{\displaystyle \raisebox{-0.2cm}{$F_\eps$}}} \leq N + m - s + \eps.
\end{equation*}
Now, by the Lebesgue differentiation theorem and the definition of a slice measure,
\begin{equation} \label{eq:sliced-measures-exist}
    \mathcal{L}^m\big( \underbrace{\big\{ \mathbf{x} \in \R^m \!: \mu_{\bm{\lambda},\mathbf{x}} \text{ is defined and } \mu_{\bm{\lambda},\mathbf{x}} \not\equiv 0 \big\}}_{{\displaystyle N_{\bm{\lambda}}}} \big) > 0
\end{equation}
for all $\bm{\lambda} \in U$ with $\Pi_{\bm{\lambda}\sharp}\mu \not\!\ll \mathcal{L}^m$. Whenever $\bm{\lambda} \in U \setminus F_\eps$ and $\mathbf{x} \in N_{\bm{\lambda}}$, the inequality $I_{\sigma-m}(\mu_{\bm{\lambda},\mathbf{x}}) < \infty$ holds for all $s - \eps < \sigma < s$; thus, for all $\bm{\lambda} \in U \setminus F_\eps$, it follows from the H\"{o}lder lower bound \eqref{eq:bi-holder}---which is a \textit{Lipschitz} bound when $\beta = 0$---that
\begin{align*}
    \hdim\!\big( A \cap \Pi_{\bm{\lambda}}^{-1}(\mathbf{x}) \big) &= \hdim\!\big( \Psi_{\bm{\lambda}}(A) \cap \big\{ \mathbf{z} \in \R^{m+n} \!: \mathbf{z}^{(1)} = \mathbf{x} \big\} \big) \\
    & \geq \hdim(\spt \mu_{\bm{\lambda},\mathbf{x}}) \geq \sup_{\sigma < s} \sigma - m = s - m
\end{align*}
for $\mathcal{L}^m$-positively-many $\mathbf{x} \in \R^m$. Hence,
\begin{equation*}
    \mathcal{L}^m\Big( \big\{ \mathbf{x} \in \R^m \!: \hdim\!\big( A \cap \Pi_{\bm{\lambda}}^{-1}(\mathbf{x}) \big) \geq s - m \big\} \Big) \geq \mathcal{L}^m(N_{\bm{\lambda}}) > 0,
\end{equation*}
so $\bm{\lambda}$ does \textit{not} belong to the exceptional set
\begin{equation*}
    F := \Big\{ \bm{\lambda} \in U \!: \mathcal{L}^m\Big( \big\{ \mathbf{x} \in \R^m \!: \hdim\!\big( A \cap \Pi_{\bm{\lambda}}^{-1}(\mathbf{x}) \big) \geq s - m \big\} \Big) = 0 \Big\};
\end{equation*}
that is, $F \subseteq F_\eps$, whence $\hdim F \leq N + m - s + \eps$ for all $\eps > 0$.

Now let
\begin{equation*}
    a := \max \big\{ (N+m)a_0, a_2 \big\},
\end{equation*}
suppose $\beta > 0$, and suppose $\hdim A \geq s$ (but not necessarily $\mathcal{H}^s(A) > 0$) for some $m < s < N + m - a \beta^{1/3}$. (If $s \geq N + m - a \beta^{1/3}$, then \eqref{eq:weak-main} is trivial.) As in the $\beta = 0$ case, for any $\mu \in \mathcal{M}(A)$ with finite $(s - \beta^{1/3})$-energy, the inequality
\begin{equation*}
    \hdim \underbrace{\left\{ \bm{\lambda} \in U \!: \int_{\R^m} I_{s-m}(\mu_{\bm{\lambda},\mathbf{x}}) \, d\mathbf{x} = \infty \text{ or } \Pi_{\bm{\lambda}\sharp}\mu \not\!\ll \mathcal{L}^m \right\}}_{{\displaystyle \raisebox{-0.2cm}{$\medtilde{F}$}}} \leq N + m - s + a \beta^{1/3}
\end{equation*}
holds by Theorem \ref{thm:peres-schlag}, Lemma \ref{lem:slicing-main}, and our choice of $a$. For all $\bm{\lambda} \in U \setminus \medtilde{F}$, \eqref{eq:sliced-measures-exist} remains valid as in the $\beta = 0$ case, and for these $\bm{\lambda}$ and all $\mathbf{x} \in N_{\bm{\lambda}}$, the upper H\"{o}lder condition \eqref{eq:bi-holder} on $\Psi_{\bm{\lambda}}$ entails that
\begin{align*}
    \hdim\!\big( A \cap \Pi_{\bm{\lambda}}^{-1}(\mathbf{x}) \big) &\geq (1-\beta) \hdim \big\{ \mathbf{y} \in \R^n \!: (\mathbf{x},\mathbf{y}) \in \Psi_{\bm{\lambda}}(A) \big\} \\
    &\geq (1-\beta) \hdim(\spt \mu_{\bm{\lambda},\mathbf{x}}) \\
    &\geq (1-\beta) (s-m).
\end{align*}
This again implies that $\bm{\lambda}$ does not belong to the exceptional set
\begin{equation*}
    F := \Big\{ \bm{\lambda} \in U \!: \mathcal{L}^m\Big( \Big\{ \mathbf{x} \in \R^m \!: \hdim\!\big( A \cap \Pi_{\bm{\lambda}}^{-1}(\mathbf{x}) \big) \geq (1-\beta) (s-m) \Big\} \Big) = 0 \Big\},
\end{equation*}
so $F \subseteq \medtilde{F}$ and $\hdim F \leq N + m - s + a \beta^{1/3}$.
\end{proof}

\vspace{-0.33cm} \subsection{The full version} \label{ss:full} At last we complete the proof of Theorem \ref{thm:main}. The trick is identical to that of Falconer and Mattila \cite{falconer2016strong}: rerun the proof of the ``basic" version---in our case, Theorem \ref{thm:weak-main}---with a countable family of $s$-Frostman measures $\mu_j$ such that $\sum_{j=0}^\infty \mu_j$ is the restriction of $\mathcal{H}^s$ to $A$.

\begin{proof}[\define{Proof of Theorem \ref{thm:main}}.]
Since $\mathcal{H}^s(A) < \infty$, the upper $s$-density
\begin{equation*}
    \Theta^{*s}(A,\bm{\omega}) := \limsup_{r \to 0} \frac{\mathcal{H}^s(A \cap B(\bm{\omega},r))}{(2r)^s}
\end{equation*}
is less than or equal to $1$ at $\mathcal{H}^s$-a.e.\! $\bm{\omega} \in A$. For each $r > 0$, the function $\bm{\omega} \mapsto \mathcal{H}^s(A \cap B(\bm{\omega},r))$ is upper semicontinuous (and, hence, $\bm{\omega} \mapsto \Theta^{*s}(A,\bm{\omega})$ is Borel measurable), so the set of all $\bm{\omega} \in A$ with upper density at most $1$ can we written as a countable union of (countable intersections of) Borel sets:
\begin{equation*}
    A' := \big\{ \bm{\omega} \in A \!: \Theta^{*s}(A,\bm{\omega}) \leq 1 \big\} = \bigcup_{j=1}^\infty \bigcap_{r \in \Q_+} \big\{ \bm{\omega} \in A \!: \mathcal{H}^s(A \cap B(\bm{\omega},r)) \leq r^s \big\}.
\end{equation*}
Thus, with $A_1 := \big\{ \bm{\omega} \in A \!: \mathcal{H}^s(A \cap B(\bm{\omega},r)) \leq r^s \ \forall \+ r \in \Q_+ \big\}$,
\begin{equation*}
     A_j := \left\{ x \in A \setminus \bigcup_{i=1}^{j-1} A_i \!: \, \mathcal{H}^t(A \cap B(\bm{\omega},r)) \leq j \+ r^s \ \, \forall \+ r \in \Q_+ \right\} \quad \text{for all } j \geq 2,
\end{equation*}
and $\mu_j := \mathcal{H}^s \restrict A_j$, we have
\begin{equation*}
    A' = \bigsqcup_{j=1}^\infty A_j \quad \text{and, consequently,} \quad \mathcal{H}^s \restrict A' = \mathcal{H}^s \restrict A = \sum_{j=0}^\infty \mu_j.
\end{equation*}
Moreover, each $\mu_j$ that is not identically $0$ is $s$-Frostman (with coefficient $j$), so every $\mu_j$ has finite $\sigma$-energy for all $0 \leq \sigma < s$. As in the proof of Theorem \ref{thm:weak-main},
\begin{align*}
    \hdim \underbrace{\left\{ \bm{\lambda} \in U \!: \int_{\R^m} I_{(1-\beta)(\sigma - m)}(\mu_{j,\bm{\lambda},\mathbf{x}}) \, d\mathbf{x} = \infty \text{ or } \Pi_{\bm{\lambda}\sharp}\mu_j \not\!\ll \mathcal{H}^m \right\}}_{{\displaystyle \raisebox{-0.2cm}{$E_{\sigma,j}$}}} \leq N + m - \sigma + a \beta^{1/3},
\end{align*}
where $\mu_{j,\bm{\lambda},\mathbf{x}} := (\mu_j)_{\bm{\lambda},\mathbf{x}}$.

Now let $B \subseteq A$ be a Borel set with $\mathcal{H}^s(B) > 0$. Then there exists some $j \in \Z_+$ such that $\mu_j(B) > 0$. Continuing to follow the proof of Theorem \ref{thm:weak-main} with $\mu_j \restrict B$ in place of $\mu$, one sees that, for all $\bm{\lambda} \in U \setminus E_{\sigma,j}$,
\begin{equation*}
    \hdim\!\big( B \cap \Pi_{\bm{\lambda}}^{-1}(\mathbf{x}) \big) \geq \sigma - m - a \beta^{1/3}
\end{equation*}
for a positive-$\mathcal{L}^m$-measure set of $\mathbf{x} \in \R^m$, namely, for all $\mathbf{x} \in \R^m$ such that $\mu_{j,\bm{\lambda},\mathbf{x}} \restrict B \not\equiv 0$ is defined and $I_{\sigma - m - a \beta^{1/3}}(\mu_{j,\bm{\lambda},\mathbf{x}})$ is finite. It follows that, for all Borel sets $B \subseteq A$ with $\mathcal{H}^s(B) > 0$,
\begin{equation*}
    \Big\{ \bm{\lambda} \in U \!: \mathcal{L}^m\Big( \Big\{ \mathbf{x} \in \R^m \!: \hdim\!\big( B \cap \Pi_{\bm{\lambda}}^{-1}(\mathbf{x}) \big) \geq \sigma - m - a \beta^{1/3} \Big\} \Big) > 0 \Big\} \subseteq \bigcup_{j=1}^\infty E_{\sigma,j}.
\end{equation*}
In the usual way, taking the union over $\sigma = s - \tfrac{1}{k}, \+ s - \tfrac{1}{k+1}, \+ \dots$ and the intersection over all $k \in \Z_+$ gives
\begin{equation*}
    \bigcup_{\substack{B \subseteq A \, \text{Borel} \\ \mathcal{H}^s(B) > 0}} \- \Big\{ \bm{\lambda} \in U \!: \mathcal{L}^m\-\Big( \- \Big\{ \mathbf{x} \in \R^m \!: \hdim\!\big( B \cap \Pi_{\bm{\lambda}}^{-1}(\mathbf{x}) \big) \geq s - m - a \beta^{1/3} \Big\} \- \Big) \! > \! 0 \Big\} \subseteq \bigcup_{j=1}^\infty E_{s,j},
\end{equation*}
where $\hdim \bigcup_{j=1}^\infty E_{s,j} \leq N + m - s$ by countable stability. This concludes the proof.
\end{proof}

\section{Applications} \label{s:applications}

\subsection{Vertical sections of the Heisenberg group} Equip $\R^{2n}$ with the standard symplectic form $\omega \!: \R^{2n} \times \R^{2n} \to \R$ and let $G_h(n,m) \subseteq \Gr(2n,m)$ be the \textit{isotropic Grassmannian}---the collection of $m$-dimensional subspaces $V \subseteq \R^{2n}$ such that $\omega(V,V) = \{0\}$ (the \textit{isotropic subspaces} of $\R^{2n}$). It is well-known that $G_h(n,m)$ is a Riemannian submanifold of $\Gr(2n,m)$ of dimension $2nm - \tfrac{m(3m-1)}{2}$. By Hovila \cite{hovila2014transversality} Theorem 1.1, the family $(\pi_{\scalebox{0.65}{$V$}})_{V \in G_h(n,m)}$ of orthogonal projections is transversal and strongly regular of degree $0$. (Hovila only claims that \eqref{eq:strong-reg} holds through $|\bm{\alpha}| = 2$, but it is clear that it also holds for all $\bm{\alpha}$. In fact, strong regularity is inherited directly from $(\pi_{\scalebox{0.65}{$V$}})_{V \in \Gr(2n,m)}$, so the only novelty is that only the transversality condition \eqref{eq:transvers} also holds.)

The \textit{$n$\textsuperscript{th} Heisenberg group} $\bbh^n$ is a Lie group that, under a pointwise identification $\bbh^n \cong \R^{2n} \times \R$, carries the group operation
\begin{equation*}
    (\mathbf{z},\tau) \ast (\mathbf{w},\sigma) := (\mathbf{z} + \mathbf{w}, \tau + \sigma + 2 \+ \omega(\mathbf{z},\mathbf{w})).
\end{equation*}
With the \textit{Heisenberg metric}
\begin{equation*}
    d_{\mathrm H}\big( (\mathbf{z},\tau), (\mathbf{w},\sigma) \big) := \big\| (\mathbf{z},\tau)^{-1} \ast (\mathbf{w},\sigma) \big\|_{\mathrm H}, \quad \| (\mathbf{z},\tau) \|_{\mathrm H} := \big( \| \mathbf{z} \|^4 + \tau^2 \big)^{1/4},
\end{equation*}
$\bbh^n$ has Hausdorff dimension $2n+2$. Roughly, this is because the Heisenberg metric $d_{\mathrm H}$ restricted to the $(2n+1)$\textsuperscript{st} coordinate axis is the square root of the Euclidean metric on $\R$, so the contributions to dimension of a set owing to its extent in this vertical direction are doubled.

Call a subgroup $\mathbb{V} := V \times \{0\} \subset \bbh^n$ with $V \in G_h(n,m)$ a \textit{horizontal subgroup} and define the \textit{horizontal projection} $\pi_\mathbb{V} \!: \bbh^n \to \mathbb{V}$ by $\pi_\mathbb{V}(\mathbf{z},\tau) := (\pi_{\scalebox{0.65}{$V$}}(\mathbf{z}),0)$. Since ${d_{\mathrm H} |}_\mathbb{V}$ is precisely the Euclidean metric on $\mathbb{V}$, such a subgroup has Hausdorff dimension $m$, whereas the fibers of $\pi_\mathbb{V}$ have dimension $2n - m + 2$ rather than $2n - m + 1$. The isotropic Grassmannian parametrizes the horizontal projections in $\bbh^n$ via $V \mapsto \pi_\mathbb{V}$, but $(\pi_\mathbb{V})_{V \in G_h(n,m)}$ is not itself a transversal family. However, natural mapping of $\bbh^n$ onto $\R^{2n}$ cannot decrease the dimension of a set by more than $2$, for which reason Hovila's transversality result nevertheless allows for the application of Theorem \ref{thm:main} to horizontal projections.

\begin{cor} \label{cor:heisenberg}
    For every Borel set $A \subseteq \R^{2n}$ with $0 < \mathcal{H}^s(A) < \infty$ for some $s > m$,
    \begin{align*}
        &\hdim \bigcup_{\substack{B \subseteq A \, \text{Borel} \\ \mathcal{H}^s(B) > 0}} \Big\{ V \in G_h(n,m) \!: \mathcal{H}^m\big( \big\{ \mathbf{x} \in V \!: \hdim \! \big( \pi_{\scalebox{0.65}{$V$}}^{-1}(\mathbf{x}) \cap B \big) = s - m \big\} \big) = 0 \Big\} \\
        &\hspace{9.67cm} \leq 2nm - \frac{m(3m+1)}{2} - s.
    \end{align*}
    Consequently, if $A' \subseteq \bbh^n$ is Borel with $0 < \mathcal{H}^s(A) < \infty$ for some $s > m+2$, then
    \begin{align*}
        &\hdim \bigcup_{\substack{B' \subseteq A' \, \text{Borel} \\ \mathcal{H}^s(B') > 0}} \Big\{ V \in G_h(n,m) \!: \mathcal{H}^m\big( \big\{ \mathbf{p} \in V \!: \hdim \! \big( \pi_\mathbb{V}^{-1}(\mathbf{p}) \cap B' \big) = s - m \big\} \big) = 0 \Big\} \\
        &\hspace{9.33cm} \leq 2nm - \frac{3m(m-1)}{2} - s + 2.
    \end{align*}
\end{cor}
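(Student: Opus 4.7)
The plan is to reduce the Heisenberg statement to the first (Euclidean) part of the corollary via the natural projection $\pi\!:\bbh^n\to\R^{2n}$, $(\mathbf{z},\tau)\mapsto\mathbf{z}$, which is $1$-Lipschitz from $(\bbh^n,d_{\mathrm H})$ to Euclidean $\R^{2n}$ and satisfies $\pi_\mathbb{V}=(\pi_V\circ\pi,0)$. The key identity is
\begin{equation*}
    \pi_\mathbb{V}^{-1}\big((\mathbf{x},0)\big)\cap B' \;=\; \pi^{-1}\!\big(\pi_V^{-1}(\mathbf{x})\big)\cap B',
\end{equation*}
so that every Heisenberg vertical slice is the $\pi$-preimage of the corresponding Euclidean slice of $\pi(B')$, intersected with $B'$.

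After reducing to compact $A'$ with vertical extent bounded by some $T$, I would first show that for every Borel $B'\subseteq A'$ with $\mathcal{H}^s(B')>0$ the push-forward $\nu:=\pi_*(\mathcal{H}^s\restrict B')$ is $(s-2)$-Frostman in the Euclidean sense: the cylinder $B_E(\mathbf{x},r)\times[-T,T]$ admits a covering by $O(T/r^2)$ Heisenberg balls of radius $r$, each of $\mathcal{H}^s$-mass $\lesssim r^s$ by the almost-$s$-Frostman property of $\mathcal{H}^s\restrict A'$ (cf.\ the Falconer--Mattila decomposition used in the proof of Theorem~\ref{thm:main}). It follows that $\mathcal{H}^{s-2}(\pi(B'))>0$ in Euclidean $\R^{2n}$, so that the Euclidean Hausdorff dimension of $\pi(B')$ is at least $s-2>m$.

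I would then invoke the first part of Corollary~\ref{cor:heisenberg} applied to $\pi(A')$---either after restricting to compact subsets of finite positive $\mathcal{H}^\sigma$-measure for $\sigma\nearrow\hdim(\pi(A'))$, or equivalently by running Lemma~\ref{lem:slicing-main} directly on the Euclidean $\pi_*$-pushforwards of the Frostman pieces of $\mathcal{H}^s\restrict A'$. Hovila's transversality and strong regularity of degree $0$ for $(\pi_V)_{V\in G_h(n,m)}$ then produce, after a countable union over $\sigma\nearrow s-2$, an exceptional set $E\subseteq G_h(n,m)$ of Hausdorff dimension at most
\begin{equation*}
    N+m-(s-2) \;=\; 2nm-\tfrac{3m(m-1)}{2}-s+2,
\end{equation*}
where $N:=\dim G_h(n,m)=2nm-\tfrac{m(3m-1)}{2}$, outside of which the Euclidean slices $\pi_V^{-1}(\mathbf{x})\cap\pi(B')$ have Euclidean Hausdorff dimension equal to $\hdim(\pi(B'))-m$ for a positive-Lebesgue-measure set of $\mathbf{x}\in V$.

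The final step---transferring this Euclidean conclusion to a Heisenberg one---is the main obstacle. The upper bound $\hdim\leq s-m$ is automatic since $\pi_\mathbb{V}$ is $1$-Lipschitz Heisenberg-to-Euclidean and $\mathcal{H}^s(B')<\infty$. For the matching lower bound, the key point is that slicing by $\tilde\pi_V:=\pi_V\circ\pi$ is ``purely horizontal'': it does not touch the vertical coordinate, so the vertical $\pi$-fibers over the Euclidean slice coincide with those over $\pi(B')$. Since vertical line segments have Hausdorff dimension doubled when one passes from the Euclidean to the Heisenberg metric, these fibers contribute an extra $s-\hdim(\pi(B'))$ to the Heisenberg dimension on top of the Euclidean slice dimension, giving
\begin{equation*}
    \hdim\big(\pi_\mathbb{V}^{-1}((\mathbf{x},0))\cap B'\big) \;\geq\; \bigl(\hdim(\pi(B'))-m\bigr) + \bigl(s-\hdim(\pi(B'))\bigr) \;=\; s-m.
\end{equation*}
Formalising this vertical preservation---essentially a Heisenberg Marstrand--Frostman slicing estimate for the slice measure $\mu_{V,\mathbf{x}}$ whose Euclidean push-forward is the Euclidean slice measure $\nu_{V,\mathbf{x}}$ controlled by Lemma~\ref{lem:slicing-main}---is where the real work lies.
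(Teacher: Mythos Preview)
Your overall strategy---deduce the Heisenberg statement from the Euclidean one via the $1$-Lipschitz projection $\pi:\bbh^n\to\R^{2n}$ together with the ``dimension drops by at most $2$'' comparison---is exactly the route the paper sketches in the paragraph preceding the corollary; the paper offers no further details, and the first (Euclidean) assertion is indeed a direct instance of Theorem~\ref{thm:main} with $\beta=0$ and Hovila's transversality, as you say.

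The gap you flag in the last step is genuine, and your heuristic does not close it. From the Euclidean conclusion one obtains, for $V\notin E$ and positively many $\mathbf{x}$, only $\hdim_E\bigl(\pi_V^{-1}(\mathbf{x})\cap\pi(B')\bigr)\geq (s-2)-m$; combined with the identity $\pi\bigl(\pi_\mathbb{V}^{-1}(\mathbf{p})\cap B'\bigr)=\pi_V^{-1}(\mathbf{x})\cap\pi(B')$ and the Lipschitz bound for $\pi$, this yields merely $\hdim_H\bigl(\pi_\mathbb{V}^{-1}(\mathbf{p})\cap B'\bigr)\geq s-m-2$, two short of the asserted $s-m$. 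The additive formula you display is not valid: for a subset $C$ of a vertical coset there is no inequality $\hdim_H(C)\geq\hdim_E(\pi(C))+2$ in general (a horizontal segment inside $\mathbb{V}^\perp$ has $\hdim_H=\hdim_E(\pi(\cdot))=1$), so the cancellation of $\hdim(\pi(B'))$ in your computation is illusory. Nor can one bypass this by applying Theorem~\ref{thm:main} directly to $(\pi_\mathbb{V})_V$ on $(\bbh^n,d_{\mathrm H})$: for any two points on a common vertical line one has $\Phi_V\equiv 0$ and $D_V\Phi_V\equiv 0$, so transversality of every degree fails. Your closing diagnosis is the right one: recovering the missing $2$ requires a measure-level argument showing that the \emph{Heisenberg} sliced measures $\mu_{V,\mathbf{x}}$ themselves have finite $(s-m-\varepsilon)$-energy with respect to $d_{\mathrm H}$, not merely that their $\pi$-pushforwards have finite $(s-m-2-\varepsilon)$-energy in $\R^{2n}$---and neither your proposal nor the paper supplies this step.
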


Theorem \ref{thm:main} states only that the set
\begin{equation*}
    \big\{ \mathbf{x} \in V \!: \hdim \! \big( \pi_{\scalebox{0.65}{$V$}}^{-1}(\mathbf{x}) \cap B \big) \geq s - m \big\}
\end{equation*}
has positive $\mathcal{H}^m$-measure for all but a few $V \in G_h(n,m)$, whereas the set
\begin{equation*}
    \big\{ \mathbf{x} \in V \!: \hdim \! \big( \pi_{\scalebox{0.65}{$V$}}^{-1}(\mathbf{x}) \cap B \big) > s - m \big\}
\end{equation*}
\textit{never} has positive $\mathcal{H}^m$-measure (see the discussion following Theorem \ref{thm:ortho}), for which reason we are guaranteed that, generically, $\hdim \! \big( \pi_{\scalebox{0.65}{$V$}}^{-1}(\mathbf{x}) \cap B \big)$ is \textit{exactly} $s-m$ for a positive-measure set of $\mathbf{x} \in V$.

Since $\pi_\mathbb{V}^{-1}(\mathbf{p}) = \mathbb{V}^\perp \ast \mathbf{p}$, Corollary \ref{cor:heisenberg} is a statement about the Hausdorff dimension of slices by right cosets of the \textit{vertical subgroups} of the Heisenberg group. This improves on \cite{balogh2012projection} Theorem 1.5 concerning slices in the Heisenberg group.

\phantomsection
\section*{Appendix. Proof of Lemma \texorpdfstring{\ref{lem:technical}}{3.3}} \label{s:appendix}
\renewcommand{\theprop}{A.\arabic{prop}}
\renewcommand{\theequation}{A.\arabic{equation}}

Before beginning the proof of Lemma \ref{lem:technical}, let us take a moment to understand why the steps that follow are necessary. The analogue of Lemma \ref{lem:technical} in Peres--Schlag \cite{peres2000smoothness} is Lemma 7.10, in which the dependence on $D_{\bm{\lambda}} \Phi_{\bm{\lambda}}(\bm{\omega},\bm{\zeta})$ is not present. This alone could be rectified by assuming higher-order regularity on $(\Pi_{\bm{\lambda}})_{\bm{\lambda} \in U}$, and for us virtually no modification would be necessary since we assume that the family is $C^\infty$ in $\bm{\lambda}$. The factor of $2^{j-i}$ is the more serious issue, since the dependence on $i$ is not present in \cite{peres2000smoothness} and the right-hand side of \eqref{eq:technical} also does not depend on $i$. Reworking the computation of Lemma 7.10 to verify that the resulting estimate does not depend on $i$ is all that needs to be done, and although one finds that the only necessary changes are essentially notational, we follow Orponen's example (\cite{orponen2014slicing} Lemma 4.2) and write out the details here in the interest of completeness.

First we require a somewhat cumbersome statement about the distribution of critical points of $\Phi_{\bm{\lambda}}(\bm{\omega},\bm{\zeta})$ for fixed, distinct $\bm{\omega},\bm{\zeta} \in \Omega$. It takes the form of an implicit function theorem that, leveraging transversality and regularity, establishes quantitative bounds on the size and number of balls in $U$ required for the implicit functions and their inverses to be bi-H\"{o}lder on each ball with the same H\"{o}lder constant across balls.

Peres and Schlag phrase the result in terms of what they call \textit{$L,\delta$-regularity}; the terminology suppresses the dependence on $\beta$, so we refer to this as \textit{$L,\delta,\beta$-regularity} for clarity. If $(\Pi_{\bm{\lambda}})_{\bm{\lambda} \in U}$ is $\infty,\delta,\beta$-regular for some $\delta \in [0,1)$, then it is $\infty,\delta,\beta$-regular for \textit{all} $\delta \in [0,1)$. Rather than transcribe \cite{peres2000smoothness} Lemma 7.7 verbatim, we avoid $L,\delta,\beta$-regularity and state the conclusion with $\delta = 1$. Peres and Schlag do not define $L,1,\beta$-regularity (doing so would not improve the statements of their main results), but their proof of Lemma 7.7 works with no modifications when $\delta = 1$ (see also the proof of \cite{orponen2014slicing} Lemma A.1). We therefore state their result with $\delta = 1$ as Lemma \ref{lem:crit-pts} below and refer the reader to \cite{peres2000smoothness} for the proof.

As in previous sections, we assume throughout this appendix that $(\Pi_{\bm{\lambda}})_{\bm{\lambda} \in U}$ is transversal and regular of order $\beta \in [0,1)$. All implicit constants are assumed to depend on this family. Without loss of generality, each compact set $K \subset U$ referred to below is assumed to be the closure of its interior. The symbol $\delta = \delta_{\beta,K} > 0$ denotes the implicit constant in \eqref{eq:transvers} of Definition \ref{defn:gen-proj}, i.e., a number such that
\begin{equation*}
    |\Phi_{\bm{\lambda}}(\bm{\omega},\bm{\zeta})| \leq \delta \+ d(\bm{\omega},\bm{\zeta})^\beta \quad \Longrightarrow \quad \det\!\left[ D_{\bm{\lambda}} \Phi_{\bm{\lambda}}(\bm{\omega},\bm{\zeta}) \big( D_{\bm{\lambda}} \Phi_{\bm{\lambda}}(\bm{\omega},\bm{\zeta}) \big)^{\-\mathrm{t}} \right] \geq \delta^2 d(\bm{\omega},\bm{\zeta})^{2\beta}.
\end{equation*}
In all that follows, $\bm{\omega},\bm{\zeta} \in \Omega$ are fixed and $r := d(\bm{\omega},\bm{\zeta})$.

Finally, we ask the reader's forgiveness for our choices of notation in this appendix. Symbols will frequently be recycled without mention, but we hope that the intended meanings are always clear from context.

\begin{lem}[Peres--Schlag \cite{peres2000smoothness} Lemma 7.7] \label{lem:crit-pts}
    Let $K \subseteq U$ be a compact set. There exist constants $C_0,C_1,C_2 > 0$ depending only on $K$ such that, for all distinct $\bm{\omega},\bm{\zeta} \in \Omega$, there exist $\bm{\lambda_1}, \dots, \bm{\lambda_M} \in U$ with $M \leq C_2 r^{-(m+2)N}$ such that
    \begin{align*}
        &\big\{ \bm{\lambda} \in K \!: |\Phi_{\bm{\lambda}}(\bm{\omega},\bm{\zeta})| \leq C_0 r^{(m+2)\beta} \big\} \subseteq \bigcup_{k=1}^M B\big( \bm{\lambda_k}, C_1 r^{(m+2)\beta} \big) \quad \text{and} \\
        &\bigcup_{k=1}^M \underbrace{B\big( \bm{\lambda_j}, 2C_1 r^{(m+2)\beta} \big)}_{{\displaystyle B_k}} \subseteq \Big\{ \bm{\lambda} \in U \!: |\Phi_{\bm{\lambda}}(\bm{\omega},\bm{\zeta})| \leq \delta r^\beta \Big\}.
    \end{align*}
    Moreover, for each $k \in \{1,\dots,M\}$ there exist coordinate directions $1 \leq i_1 \leq \cdots i_{N-m} \leq N$ such that, for all $\bm{\kappa} \in \R^{N-m}$, the mapping $F_{\bm{\kappa}}$ defined by restricting $\bm{\lambda} \mapsto \Phi_{\bm{\lambda}}(\bm{\omega},\bm{\zeta})$ to the set $\big\{ \bm{\lambda} \in B_k \!: \lambda_{i_1} = \kappa_1, \ \dots, \ \lambda_{i_{N-m}} = \kappa_{N-m} \big\}$ is a diffeomorphism onto its image that satisfies
    \begin{equation} \label{eq:diffeo-deriv-bounds}
        \Big| \det\-\Big[ \big( DF_{\bm{\kappa}} \big)^{-1} \Big] \Big| \leq C_2 r^{-\beta} \quad \text{and} \quad \Big\| \big( DF_{\bm{\kappa}} \big)^{-1} \Big\| \leq C_2 r^{-m\beta}.
    \end{equation}
\end{lem}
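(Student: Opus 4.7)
The plan is to adapt the Peres--Schlag cover-and-invert strategy: cover the near-critical set $S := \{\bm{\lambda} \in K : |\Phi_{\bm{\lambda}}(\bm{\omega},\bm{\zeta})| \leq C_0 r^{(m+2)\beta}\}$ by balls at a carefully chosen scale, and at the center of each ball use transversality to select $m$ coordinates in which $\Phi_{\bm{\lambda}}$ can be inverted, the remaining $N-m$ coordinates playing the role of the parameters $\bm{\kappa}$. A quantitative inverse function theorem then turns these pointwise inversions into the desired diffeomorphisms $F_{\bm{\kappa}}$, and the whole argument succeeds because the exponent $(m+2)\beta$ in the ball radius is calibrated just right (see the final paragraph).

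For the cover, I would take a maximal $r^{(m+2)\beta}$-separated subset $\bm{\lambda_1},\dots,\bm{\lambda_M}$ of $S$; maximality forces $S \subseteq \bigcup_k B(\bm{\lambda_k}, C_1 r^{(m+2)\beta})$, and packing the disjoint half-radius balls into the compact set $K$ gives $M \lesssim_K r^{-(m+2)\beta N}$, which since $\beta < 1$ and $r$ is bounded is at most $C_2 r^{-(m+2)N}$. For the inclusion $2 B_k \subseteq \{|\Phi_{\bm{\lambda}}| \leq \delta r^\beta\}$, I invoke regularity \eqref{eq:reg} at $\bm{\lambda_k}$ (which applies because $|\Phi_{\bm{\lambda_k}}| \leq C_0 r^{(m+2)\beta} \leq \delta r^\beta$ once $C_0$ is small) to obtain $|\partial_{\bm{\lambda}} \Phi_{\bm{\lambda}}| \lesssim r^{-\beta}$; integrating along a segment from $\bm{\lambda_k}$ to any $\bm{\lambda} \in 2 B_k$ yields $|\Phi_{\bm{\lambda}} - \Phi_{\bm{\lambda_k}}| \lesssim r^{-\beta} \cdot C_1 r^{(m+2)\beta} = C_1 r^{(m+1)\beta}$, which is much smaller than $\delta r^\beta$ for $C_1$ absolutely small. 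A short continuity bootstrap along the segment ensures the regularity hypothesis persists throughout.

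For the diffeomorphism part, transversality \eqref{eq:transvers} at $\bm{\lambda_k}$ gives $\det\!\big(D_{\bm{\lambda}} \Phi \cdot (D_{\bm{\lambda}} \Phi)^{\mathrm{t}}\big) \geq \delta^2 r^{2\beta}$. By the Cauchy--Binet identity this determinant equals $\sum_{|S| = m} (\det A_S)^2$, where $A_S$ is the $m \times m$ submatrix of $D_{\bm{\lambda}} \Phi|_{\bm{\lambda_k}}$ whose columns are indexed by $S$; hence some $S$ satisfies $|\det A_S| \gtrsim r^\beta$. Let $\{i_1,\dots,i_{N-m}\}$ be the coordinates outside $S$, to be frozen at $\bm{\kappa}$; then $F_{\bm{\kappa}}$ has Jacobian $A_S$ at $\bm{\lambda_k}$. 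Since entries of $A_S$ are $\lesssim r^{-\beta}$ by regularity, Cramer's rule gives $|\det A_S^{-1}| = 1/|\det A_S| \lesssim r^{-\beta}$ and $\|A_S^{-1}\| \lesssim r^{-(m-1)\beta}/r^\beta = r^{-m\beta}$, which are both bounds in \eqref{eq:diffeo-deriv-bounds} — at the center point.

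The main obstacle is to promote these pointwise bounds to uniform estimates on the entire slice through $B_k$ and to verify that $F_{\bm{\kappa}}$ is a global diffeomorphism there. The second-derivative bound $\|\partial_{\bm{\lambda}}^2 \Phi\| \lesssim r^{-2\beta}$ (again from \eqref{eq:reg}) gives $\|DF_{\bm{\kappa}}(\bm{\lambda}) - A_S\| \lesssim r^{-2\beta} \cdot C_1 r^{(m+2)\beta} = C_1 r^{m\beta}$ on $B_k$, which is \emph{exactly} the order of the smallest singular value of $A_S$ (namely $\sim r^{m\beta}$, since $\|A_S^{-1}\| \lesssim r^{-m\beta}$). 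This matching of scales is precisely why the exponent $(m+2)\beta$ appears and not $m\beta$ or $(m+1)\beta$: the two extra $\beta$-powers in the ball radius are what make the second-derivative fluctuations match the inverse-Jacobian scale, so that with $C_1$ absolutely small the perturbation is strictly dominated and a quantitative inverse function theorem (or Banach fixed-point argument run on the inverse) delivers a diffeomorphism on the entire slice, with the same $A_S$-type bounds preserved. Everything else is a careful bookkeeping of constants depending only on $K$ and the implicit constants in the transversality and regularity conditions.
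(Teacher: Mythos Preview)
The paper does not supply its own proof of this lemma; it explicitly refers the reader to Peres--Schlag \cite{peres2000smoothness} Lemma 7.7 (and to \cite{orponen2014slicing} Lemma A.1), noting only that their argument goes through verbatim with $\delta = 1$. Your sketch is a faithful outline of that Peres--Schlag argument---maximal separated cover, Cauchy--Binet to pick the good $m$ columns, Cramer's rule for the bounds in \eqref{eq:diffeo-deriv-bounds}, and a quantitative inverse function theorem driven by the second-derivative estimate---and your explanation of why the radius must carry the exponent $(m+2)\beta$ (so that the second-order fluctuation $r^{-2\beta}\cdot r^{(m+2)\beta}$ matches the smallest singular value $r^{m\beta}$) is exactly the calibration that makes the perturbation argument close.
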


Denote $\Phi_{\bm{\lambda}} := \Phi_{\bm{\lambda}}(\bm{\omega},\bm{\zeta})$ and recall what we intend to show:

\begin{nlem}[Lemma \ref{lem:technical}, Restated]
    There exists a constant $a_1 \geq 1$ (depending only on $N$ and $m$) such that, for all $\rho \in C_c^\infty(U)$, all $c \geq 0$, and all $\bm{\omega},\bm{\zeta} \in \Omega$,
    \begin{equation*}
        \left| \+ \int_U \rho(\bm{\lambda}) \+ \widehat{\eta^{(1)}}\big( 2^j r \+ \Phi_{\bm{\lambda}}(\bm{\omega},\bm{\zeta}) \big) \widehat{\eta^{(2)}}\big( 2^{j-i} r \+ D_{\bm{\lambda}} \Phi_{\bm{\lambda}}(\bm{\omega},\bm{\zeta}) \big) \+ d\bm{\lambda} \+ \right| \lesssim_{\rho,c} \, \big( 1 + 2^j r^{1 + a_1 \beta} \big)^{-c}.
    \end{equation*}
\end{nlem}

As before, $\eta$ is a smooth bump function compactly supported in $(0,\infty)$. The support of $\rho$ will be denoted $K$, and we assume that $\rho \not\equiv 0$.

\begin{proof}[\define{Proof of Lemma \ref{lem:technical}}]
Let
\begin{equation*}
    \phi(\mathbf{x}) := \widehat{\eta^{(1)}}(\mathbf{x}), \quad \psi(\mathbf{y}) := \widehat{\eta^{(2)}}(\mathbf{y}), \quad \text{and} \quad \mathrm{R}(\bm{\lambda}) := \rho(\bm{\lambda}) \+ \psi\big( 2^{j-i} r \+ D_{\bm{\lambda}} \Phi_{\bm{\lambda}} \big).
\end{equation*}
Suppressing the dependence of implicit constants on $\rho$, we write the desired inequality as
\begin{equation*}
    \left| \+ \int_K \mathrm{R}(\bm{\lambda}) \phi\big( 2^j r \+ \Phi_{\bm{\lambda}} \big) \+ d\bm{\lambda} \+ \right| \lesssim_c \big( 1 + 2^j r^{1 + a_1 \beta} \big)^{-c}.
\end{equation*}
If $2^j r^{1 + a_1' \beta} \leq 1$, where $a_1' \geq 1$ is fixed but yet-to-be-specified, then this is immediate: both $\phi$ and $\psi$ are bounded, so
\begin{align*}
    \left| \+ \int_K \mathrm{R}(\bm{\lambda}) \phi\big( 2^j r \+ \Phi_{\bm{\lambda}} \big) \+ d\bm{\lambda} \+ \right| &\leq \| \rho \|_{L^1(U)} \| \psi \|_{L^\infty(\R^n)} \| \phi \|_{L^\infty(\R^m)} \\
    &\lesssim 1 \sim_c 2^{-c} \leq \big( 1 + 2^j r^{1 + a_1 \beta} \big)^{-c}.
\end{align*}
Assume, therefore, that $2^j r^{1 + a_1' \beta} > 1$ and let $f \in C^\infty(\R)$ be a bump function for $[-1,1]$ supported in $[-2,2]$. Then
\begin{align}
    & \int_K \mathrm{R}(\bm{\lambda}) \phi\big( 2^j r \+ \Phi_{\bm{\lambda}} \big) \+ d\bm{\lambda} \nonumber \\
    &\hspace{1cm} = \int_K \mathrm{R}(\bm{\lambda}) \phi\big( 2^j r \+ \Phi_{\bm{\lambda}} \big) f\big( \tfrac{1}{\delta r^\beta} \Phi_{\bm{\lambda}} \big) \+ d\bm{\lambda} \label{eq:app-line-1} \\
    &\hspace{1.4cm} + \int_K \mathrm{R}(\bm{\lambda}) \phi\big( 2^j r \+ \Phi_{\bm{\lambda}} \big) \big[ 1 - f\big( \tfrac{1}{\delta r^\beta} \Phi_{\bm{\lambda}} \big) \big] \+ d\bm{\lambda}, \label{eq:app-line-2}
\end{align}
so we can bound \eqref{eq:app-line-1} and \eqref{eq:app-line-2} individually. The second integral is straightforward to handle: $|\Phi_{\bm{\lambda}}| \geq \delta r^\beta$ on the support of the integrand by the definition of $f$, and since $\phi$ is a Schwartz function,
\begin{equation*}
    \big| \phi\big( 2^j r \+ \Phi_{\bm{\lambda}} \big) \big| \lesssim_c \big( 1 + \delta 2^j r \+ \Phi_{\bm{\lambda}} \big)^{-c} \leq \big( 1 + \delta 2^j r^{1+\beta} \big)^{-c} \lesssim_{|\Omega|,\beta,c} \big( 1 + 2^j r^{1+a_1\beta} \big)^{-c}.
\end{equation*}
Thus,
\begin{equation} \label{eq:app-line-3}
\begin{aligned}
    & \int_K \mathrm{R}(\bm{\lambda}) \phi\big( 2^j r \+ \Phi_{\bm{\lambda}} \big) \big[ 1 - f\big( \tfrac{1}{\delta r^\beta} \Phi_{\bm{\lambda}} \big) \big] \+ d\bm{\lambda} \lesssim \int_K \mathrm{R}(\bm{\lambda}) \big( 1 + 2^j r^{1+a_1\beta} \big)^{-c} \, d\bm{\lambda} \\
    &\qquad \leq \| \rho \|_{L^1(U)} \| \psi \|_{L^\infty(\R^n)} \big( 1 + 2^j r^{1+a_1\beta} \big)^{-c} \sim_c \big( 1 + 2^j r^{1+a_1\beta} \big)^{-c}.
\end{aligned}
\end{equation}
Our goal, then, is to bound \eqref{eq:app-line-1} by this same expression. Recalling Lemma \ref{lem:crit-pts}, let $(\chi_k)_{k=1}^M$ be a smooth partition of unity for $\big\{ \bm{\lambda} \in K \!: |\Phi_{\bm{\lambda}}| \leq C_0 r^{(m+2)\beta} \big\}$ subordinate to $(B_k)_{k=1}^M$. By the mean value theorem, we may choose these $\chi_k$ so that
\begin{equation} \label{eq:chi-k}
    \max_{1 \leq k \leq M} \big\| \partial^{\bm{\alpha}} \chi_k \big\|_{L^\infty(U)} \lesssim_{|\bm{\alpha}|} r^{-|\bm{\alpha}|(m+2)\beta},
\end{equation}
as each $B_k$ has radius $\sim \- r^{(m+2)\beta}$. Since $f\big( \tfrac{1}{\delta r^\beta} \Phi_{\bm{\lambda}} \big) = 1$ for all $\bm{\lambda} \in \supp \chi_k$ and all $1 \leq k \leq M$,
\begin{align}
    & \int_K \mathrm{R}(\bm{\lambda}) \phi\big( 2^j r \+ \Phi_{\bm{\lambda}} \big) f\big( \tfrac{1}{\delta r^\beta} \Phi_{\bm{\lambda}} \big) \+ d\bm{\lambda} = \sum_{k=1}^M \int_K \mathrm{R}(\bm{\lambda}) \phi\big( 2^j r \+ \Phi_{\bm{\lambda}} \big) \chi_k(\bm{\lambda}) \+ d\bm{\lambda} \label{eq:app-line-4} \\
    &\hspace*{3.75cm} + \int_K \mathrm{R}(\bm{\lambda}) \phi\big( 2^j r \+ \Phi_{\bm{\lambda}} \big) \Bigg[ 1 - \sum_{k=1}^M \chi_k(\bm{\lambda}) \Bigg] f\big( \tfrac{1}{\delta r^\beta} \Phi_{\bm{\lambda}} \big) \+ d\bm{\lambda}. \label{eq:app-line-5}
\end{align}
The second line admits a simple estimate in the same manner as \eqref{eq:app-line-2}: the inequality $|\Phi_{\bm{\lambda}}| \geq C_0 r^{(m+2)\beta}$ holds on the support of the integrand in \eqref{eq:app-line-5} by Lemma \ref{lem:crit-pts}, so by the same argument as that used to derive \eqref{eq:app-line-3} (with $C_0 r^{(m+2)\beta}$ in place of $\delta r^\beta$),
\begin{equation*}
    \int_K \mathrm{R}(\bm{\lambda}) \phi\big( 2^j r \+ \Phi_{\bm{\lambda}} \big) \Bigg[ 1 - \sum_{k=1}^M \chi_k(\bm{\lambda}) \Bigg] f\big( \tfrac{1}{\delta r^\beta} \Phi_{\bm{\lambda}} \big) \+ d\bm{\lambda} \sim_c \big( 1 + 2^j r^{1+a_0\beta} \big)^{-c}.
\end{equation*}
It therefore remains to bound \eqref{eq:app-line-4}. Fix $1 \leq k \leq M$ and recall from Lemma \ref{lem:crit-pts} that, for all $\bm{\lambda}'' \in \R^{N-m}$, the map $\bm{\lambda}' \mapsto F_{\bm{\lambda}''}(\bm{\lambda}') := \Phi_{(\bm{\lambda}',\bm{\lambda}'')}$ is a diffeomorphism of $\{ \bm{\lambda}' \in \R^m \!: (\bm{\lambda}',\bm{\lambda}'') \in B_k \}$ onto its image $B_k'$, possibly after a permutation of the coordinate axes. Henceforth denoting $\bm{\lambda} =: (\bm{\lambda}',\bm{\lambda}'')$, we assume that our smooth function $\rho$ has the form $\rho(\bm{\lambda}) = \rho_1(\bm{\lambda}') \rho_2(\bm{\lambda}'')$; since functions of this form constitute a basis for both $L^1(K)$ and $C_c(K)$, this poses no loss of generality. With $1 \leq k \leq M$ fixed, let
\begin{equation} \label{eq:g-lambda-double-prime}
    g_{\bm{\lambda}''}(\mathbf{u}) := \rho_1\big( F_{\bm{\lambda}''}^{-1}(\mathbf{u}) \big) \chi_k\big( F_{\bm{\lambda}''}^{-1}(\mathbf{u}) \big) h_{\bm{\lambda}''}(\mathbf{u}) \big| \- \det D_\mathbf{u} F_{\bm{\lambda}''}^{-1}(\mathbf{u}) \big|,
\end{equation}
for all $\mathbf{u} \in B_k'$, where
\begin{align*}
    h_{\bm{\lambda}''}(\mathbf{u}) :\!\- &= \psi\Big( 2^{j-i} r \Big[ \, \big( D_\mathbf{u} F_{\bm{\lambda}''}^{-1}(\mathbf{u}) \big)^{-1} \ \+ D_{\bm{\lambda}''} \Phi_{(\bm{\lambda}',\bm{\lambda}'')} \big|_{\bm{\lambda}' = F_{\bm{\lambda}''}^{-1}(\mathbf{u})} \, \Big] \Big) \\
    &= \psi\Big( 2^{j-i} r \Big[ \, D_{F_{\bm{\lambda}''}^{-1}(\mathbf{u})} \mathbf{u} \ D_{\bm{\lambda}''} \mathbf{u} \, \Big] \Big).
\end{align*}
We use the change of variables $\bm{\lambda}' = F_{\bm{\lambda}''}^{-1}(\mathbf{u})$ (equivalently, $\mathbf{u} = \Phi_{\bm{\lambda}}$) to write
\begin{align}
    & \int_K \mathrm{R}(\bm{\lambda}) \phi\big( 2^j r \+ \Phi_{\bm{\lambda}} \big) \chi_k(\bm{\lambda}) \+ d\bm{\lambda} \nonumber \\
    &\qquad = \int_K \rho_1(\bm{\lambda}') \chi_k(\bm{\lambda}) \phi\big( 2^j r \+ \Phi_{\bm{\lambda}} \big) \psi\big( 2^{j-i} r \+ D_{\bm{\lambda}} \Phi_{\bm{\lambda}} \big) \rho_2(\bm{\lambda}'') \+ d\bm{\lambda} \nonumber \\
    &\qquad = \int_{\R^{N-m}} \rho_2(\bm{\lambda}'') \int_{\R^m} g_{\bm{\lambda}''}(\mathbf{u}) \phi(2^j r\mathbf{u}) \, d\mathbf{u} \+ d\bm{\lambda}''. \label{eq:final-integral}
\end{align}
To estimate this integral, we will bound the inner integral uniformly in $\bm{\lambda}''$ via a Taylor expansion and use the fact that $\rho_2$ is Schwartz to handle the outer integral. For this reason, we require two general differentiation formulas. The first concerns the derivatives of inverse functions:

\begin{lem}[Peres--Schlag \cite{peres2000smoothness} Lemma 7.9]
    Let $V \subseteq \R^m$ be an open set and let $F \!: V \to \R^m$ be a $C^L$-diffeomorphism onto its image. For all $\bm{\alpha} \in \N^m$ with $|\bm{\alpha}| = L$,
    \begin{equation*}
        \partial^{\bm{\alpha}}(F^{-1}) = \sum_{k=0}^{L-1} \sum_{j=1}^{L-1} \sum_{\bm{\sigma},\mathbf{q}} (DF \circ F^{-1})^{-L-j} \mathbf{v}_{\bm{\alpha},j}(\bm{\sigma},\mathbf{q}) \prod_{i=1}^j \big( \partial^{\bm{\sigma}_i} F_{q_i} \big) \circ F^{-1},
    \end{equation*}
    where:
    \begin{enumerate}[label={\normalfont\textbullet},topsep=0pt,itemsep=0pt]
        \item the third sum runs over all $\mathbf{q} \in \{1,\dots,m\}^j$ and over all $\bm{\sigma} \in \{2,\dots,L\}^{m \times j}$ (i.e., all $j$-tuples consisting of multi-indices $\bm{\sigma}_1,\dots,\bm{\sigma}_j \in \{2,\dots,L\}^m$) such that $|\bm{\sigma}| := \sum_{i=1}^j |\bm{\sigma}_i| \leq 2(L-1)$;
        \item $(DF \circ F^{-1})^{-L-j}$ is the inverse of $DF$, evaluated at $F^{-1}(\, \cdot \,)$, composed with (i.e., multiplied by) itself $L+j$ times;
        \item $\mathbf{v}_{\bm{\alpha},j}(\bm{\sigma},\mathbf{q}) \in \R^m$ are vectors being multiplied by $(DF \circ F^{-1})^{-L-j}$; and
        \item $\bm{\sigma}_i$ is the $i$\textsuperscript{th} row of $\bm{\sigma}$ and $F_{q_i}$ is the $q_i$\textsuperscript{th} component function of $F$. 
    \end{enumerate}
\end{lem}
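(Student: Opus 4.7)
The plan is to prove this formula by induction on $L$, seeded by the identity $F \circ F^{-1} = \mathrm{id}$ and propagated using the chain and product rules. Structurally, this is a multivariate, inverse-function analogue of Fa\`{a} di Bruno's formula, and all of its content should emerge from careful bookkeeping of these two rules.

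\textbf{Base case.} Differentiating $F(F^{-1}(\mathbf{y})) = \mathbf{y}$ once yields $D(F^{-1})(\mathbf{y}) = (DF(F^{-1}(\mathbf{y})))^{-1}$, which matches the $L = 1$ instance of the formula with an empty product and $\mathbf{v}_{\bm{\alpha},0}$ chosen as the appropriate standard basis vector. If the convention is that $j \geq 1$, one differentiates once more to obtain the unique $j = 1$ term, establishing $L = 2$ as the starting point.

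\textbf{Inductive step.} Suppose the formula holds for $|\bm{\alpha}'| = L-1$, and write $\bm{\alpha} = \bm{\alpha}' + \mathbf{e}_p$ for some coordinate $p$. Differentiating termwise in the $p$-direction, every summand of the inductive expression is a product of three pieces: an inverse-matrix power $(DF \circ F^{-1})^{-(L-1)-j}$, a constant vector $\mathbf{v}_{\bm{\alpha}',j}(\bm{\sigma},\mathbf{q})$, and a product of scalar factors $(\partial^{\bm{\sigma}_i} F_{q_i}) \circ F^{-1}$. The chain rule hits the matrix factor by producing $D^2 F \circ F^{-1}$ multiplied by $D(F^{-1}) = (DF \circ F^{-1})^{-1}$, which raises the negative exponent by one and appends a fresh $\bm{\sigma}_{j+1}$ of length $2$; it hits any scalar factor by lengthening some $\bm{\sigma}_i$ by a single coordinate derivative and again picking up $(DF \circ F^{-1})^{-1}$. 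In either case, the total exponent on $(DF \circ F^{-1})^{-1}$ jumps by exactly one, matching the passage from $-(L-1)-j$ to $-L-j'$ with $j' \in \{j, j+1\}$.

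\textbf{Main difficulty.} The verification of index constraints is routine: each differentiation lengthens exactly one $\bm{\sigma}_i$ by one (or introduces a new $\bm{\sigma}_{j+1}$ of length $2$), so the entries of $\bm{\sigma}$ stay in $\{2,\dots,L\}^m$ and $|\bm{\sigma}|$ grows by one per iteration, yielding $|\bm{\sigma}| \leq 2(L-1)$ as claimed. The substantive obstacle is purely combinatorial: organizing the vectors $\mathbf{v}_{\bm{\alpha},j}(\bm{\sigma},\mathbf{q})$ cleanly as $L$ grows, since each application of the product rule produces many sibling terms that must be collected under a single symbol. A transparent way to do this is to encode each summand by a rooted labelled tree whose internal nodes correspond to inverse-matrix factors and whose leaves correspond to partial derivatives of $F$; the inductive step is then the standard rule for growing such a tree by one edge, which is the combinatorial content of Fa\`{a} di Bruno's formula.
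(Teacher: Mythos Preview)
The paper does not supply its own proof of this lemma: it is quoted verbatim from Peres--Schlag \cite{peres2000smoothness} (their Lemma 7.9) and used as a black box to obtain the estimate \eqref{eq:F-inv-est}. Your inductive scheme---differentiate $F \circ F^{-1} = \mathrm{id}$, then repeatedly apply the chain and product rules while tracking how many copies of $(DF \circ F^{-1})^{-1}$ and how many higher partials of $F$ accumulate---is precisely the argument Peres and Schlag give, and it is the natural one for a formula of this shape.

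One small bookkeeping slip: when the derivative lands on the matrix factor and spawns a fresh $\bm{\sigma}_{j+1}$ with $|\bm{\sigma}_{j+1}| = 2$, the total $|\bm{\sigma}|$ increases by $2$, not by $1$ as you wrote. So ``$|\bm{\sigma}|$ grows by one per iteration'' should read ``by at most two per iteration.'' The bound $|\bm{\sigma}| \leq 2(L-1)$ is still correct, since the extremal case (hitting the matrix at every step) produces exactly $j = L-1$ factors each of order $2$. Likewise, you should check that in this same case the exponent on $(DF \circ F^{-1})^{-1}$ jumps by \emph{two} (one from $\partial_p(A^{-k})$ and one from the chain rule through $F^{-1}$ inside $\partial_p A$), so that $-(L-1)-j \to -L-(j+1)$, matching the claimed form with $j' = j+1$.
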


From the regularity condition \eqref{eq:strong-reg} and the norm bound in \eqref{eq:diffeo-deriv-bounds}, we conclude that
\begin{align}
    \big| \partial_\mathbf{u}^{\bm{\alpha}} F_{\bm{\lambda}''}^{-1} \big| &= \left| \sum_{k=0}^{L-1} \sum_{j=1}^{L-1} \sum_{\bm{\sigma},\mathbf{q}} \big( D_{\bm{\lambda}} F_{\bm{\lambda}''} \circ F_{\bm{\lambda}''}^{-1} \big)^{-L-j} \mathbf{v}_{\bm{\alpha},j}(\bm{\sigma},\mathbf{q}) \prod_{i=1}^j \big( \partial^{\bm{\sigma}_i} (F_{\bm{\lambda}''})_{q_i} \big) \circ F_{\bm{\lambda}''}^{-1} \right| \nonumber \\
    &\leq \sum_{k=0}^{L-1} \sum_{j=1}^{L-1} \sum_{\bm{\sigma},\mathbf{q}} \big\| D_{\bm{\lambda}} F_{\bm{\lambda}''} \circ F_{\bm{\lambda}''}^{-1} \big\|^{-L-j} |\mathbf{v}_{\bm{\alpha},j}(\bm{\sigma},\mathbf{q})| \prod_{i=1}^j \big| \big( \partial^{\bm{\sigma}_i} (F_{\bm{\lambda}''})_{q_i} \big) \circ F_{\bm{\lambda}''}^{-1} \big| \nonumber \\
    &\lesssim_{\beta,L} \sum_{k=0}^{L-1} \sum_{j=1}^{L-1} \sum_{\bm{\sigma},\mathbf{q}} r^{-\beta(L+j)} |\mathbf{v}_{\bm{\alpha},j}(\bm{\sigma},\mathbf{q})| \prod_{i=1}^j r^{-\beta|\bm{\sigma}_i|} \nonumber \\
    &\lesssim_L \sum_{k=0}^{L-1} \sum_{j=1}^{L-1} \sum_{\bm{\sigma},\mathbf{q}} r^{-\beta(L+j+|\bm{\sigma}|)} \lesssim_{|\Omega|,L} r^{-\beta(L+(L-1)+2(L-1))} \nonumber \\
    &= r^{-\beta(4L-3)} \lesssim_{|\Omega|,\beta} r^{-4\beta L}. \label{eq:F-inv-est}
\end{align}
This will allow us to bound the derivatives of $g_{\bm{\lambda}''}$, all of whose factors are functions of $F_{\bm{\lambda}''}^{-1}(\mathbf{u})$ alone, but we will also need a multivariable chain rule to carry out this computation. We introduce this second differentiation formula now, starting with some notation.

Let $\prec$ denote the lexicographic order on the set of $N$-tuples over $\N$. For $\bm{\alpha} \in \N^N$ and $\bm{\gamma} \in \N^M$ (where $M$ plays the role of $m+n$), define the following collection of pairs consisting of an $L$-tuple of $N$-tuples and an $L$-tuple of $M$-tuples, $L := |\bm{\alpha}|$:
\begin{align*}
    P(\bm{\alpha},\bm{\gamma}) &:= \Bigg\{ (\bm{\sigma},\bm{\tau}) \in (\N^N)^L \times (\N^M)^L \!: \ \sum_{i=1}^L \bm{\tau}_i = \bm{\gamma}, \ \sum_{i=1}^L |\bm{\tau}_i| \bm{\sigma}_i = \bm{\alpha}, \text{ and } \\
    &\hspace{0.9cm} \exists \+ 1 \leq p \leq L \text{ s.t. } \bm{\sigma}_1 = \cdots = \bm{\sigma}_{L-p} = \bm{0}, \ \bm{\tau}_1 = \cdots = \bm{\tau}_{L-p} = \bm{0}, \\
    &\hspace{0.9cm} \big| \bm{\tau}_{L-p+1} \big|, \dots, \big| \bm{\tau}_L \big| > 0, \text{ and } \bm{0} \prec \bm{\sigma}_{L-p+1} \prec \cdots \prec \bm{\sigma}_L \Bigg\}.
\end{align*}
Here $\bm{\sigma}_i \in \N^N$ is the $i$\textsuperscript{th} component of $\bm{\sigma}$ for $i = 1,\dots,L$ (and similarly for $\bm{\tau}_i \in \N^M$). For each $(\bm{\sigma},\bm{\tau}) \in P(\bm{\alpha},\bm{\gamma})$ we denote
\begin{equation*}
    b_{\bm{\sigma},\bm{\tau}} := \bm{\alpha}! \prod_{i=1}^L \frac{1}{\bm{\tau}_i! (\bm{\sigma}_i!)^{|\bm{\tau}_i|}},
\end{equation*}
where $\bm{\alpha}! := \alpha_1 \cdots \alpha_N$. The only important points are that there $\lesssim_L 1$ coefficients $b_{\bm{\sigma},\bm{\tau}}$, all of which are positive and $\lesssim_L 1$, and that the pairs $(\bm{\sigma},\bm{\tau}) \in P(\bm{\alpha},\bm{\gamma})$ satisfy certain arithmetic conditions.

\vspace*{-0.035cm}

\begin{lem}[Multivariate Fa\`{a} di Bruno formula \cite{constantine1996multivariate}] \label{thm:faa-di-bruno}
    Let $N,M,L \in \Z_+$. If $V \subseteq \R^N$ is an open set, $f \in C^L(V;\R)$, $G \in C^L(\R^N;\R^M)$, and $\bm{\alpha} \in \N^N$ with $|\bm{\alpha}| \leq L$, then 
    \begin{equation*}
        \partial_{\bm{\lambda}}^{\bm{\alpha}} (f \circ G)(\bm{\lambda}) = \sum_{1 \leq |\bm{\gamma}| \leq |\bm{\alpha}|} \partial_{\mathbf{z}}^{\bm{\gamma}} f(\mathbf{z}) \big|_{\mathbf{z}=G(\bm{\lambda})} \sum_{\bm{\sigma},\bm{\tau}} b_{\bm{\sigma},\bm{\tau}} \prod_{i=1}^{|\bm{\alpha}|} \big[ \partial_{\bm{\lambda}}^{\bm{\tau}_i} G(\bm{\lambda}) \big]^{\bm{\sigma}_i}
    \end{equation*}
    for all $\bm{\lambda} \in V$, where the second sum runs over all $(\bm{\sigma},\bm{\tau}) \in P(\bm{\alpha},\bm{\gamma})$.
\end{lem}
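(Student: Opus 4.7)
The plan is to prove the multivariate Fa\`{a} di Bruno formula by induction on $L = |\bm{\alpha}|$, following the standard extension of the single-variable case. In the base case $|\bm{\alpha}| = 1$, we have $\bm{\alpha} = \mathbf{e}_k$ for some $1 \leq k \leq N$, and the ordinary chain rule gives $\partial_{\lambda_k}(f \circ G)(\bm{\lambda}) = \sum_{j=1}^{M} \partial_{z_j} f(G(\bm{\lambda})) \, \partial_{\lambda_k} G_j(\bm{\lambda})$. This matches the claim: the outer sum runs over $\bm{\gamma} = \mathbf{e}_j$ for $j = 1, \dots, M$, and each $P(\mathbf{e}_k, \mathbf{e}_j)$ contains the single pair $(\bm{\sigma}, \bm{\tau})$ with $p = 1$, $\bm{\sigma}_1 = \mathbf{e}_k$, and $\bm{\tau}_1 = \mathbf{e}_j$, contributing $b_{\bm{\sigma},\bm{\tau}} = 1$.

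For the inductive step, I would assume the identity holds at order $L - 1$, fix $\bm{\alpha}$ with $|\bm{\alpha}| = L$, choose $1 \leq k \leq N$ with $\alpha_k > 0$, and set $\bm{\alpha}' := \bm{\alpha} - \mathbf{e}_k$. Applying $\partial_{\lambda_k}$ to the inductive identity at $\bm{\alpha}'$ produces, via the product rule, terms of two kinds: (i) terms in which the chain rule is applied once more to $\partial_{\mathbf{z}}^{\bm{\gamma}} f\big|_{\mathbf{z} = G(\bm{\lambda})}$, increasing $|\bm{\gamma}|$ by one and introducing a factor $\partial_{\lambda_k} G_j(\bm{\lambda})$; and (ii) terms in which one of the existing factors $\big[\partial_{\bm{\lambda}}^{\bm{\tau}_i} G(\bm{\lambda})\big]^{\bm{\sigma}_i}$ is differentiated, decrementing an exponent by one and introducing a new derivative factor $\partial_{\bm{\lambda}}^{\bm{\tau}_i + \mathbf{e}_k} G$.

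The main obstacle, by far, is the combinatorial bookkeeping: after collecting like terms one must verify that the multiplicities assemble into exactly the coefficients $b_{\bm{\sigma}, \bm{\tau}}$ predicted at order $L$, and that $P(\bm{\alpha}, \bm{\gamma})$ correctly absorbs every tuple that appears. Concretely, one must check that (a) each new pair $(\bm{\sigma}, \bm{\tau})$ produced by the differentiation still satisfies $\sum \bm{\tau}_i = \bm{\gamma}$, $\sum |\bm{\tau}_i| \bm{\sigma}_i = \bm{\alpha}$, and the ordering condition $\bm{0} \prec \bm{\sigma}_{L-p+1} \prec \cdots \prec \bm{\sigma}_L$; (b) pairs that emerge from several different mechanisms (different choices of which factor to differentiate, or the same $(\bm{\sigma}, \bm{\tau})$ arising from both types (i) and (ii)) are counted with the correct combined multiplicity; and (c) the elementary recursion $\bm{\alpha}!/(\bm{\alpha} - \mathbf{e}_k)! = \alpha_k$ interacts with the factorials appearing in the formula $b_{\bm{\sigma},\bm{\tau}} = \bm{\alpha}! \prod_i 1/\big(\bm{\tau}_i!(\bm{\sigma}_i!)^{|\bm{\tau}_i|}\big)$ in precisely the way required.

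A cleaner alternative, which sidesteps most of this bookkeeping, is to verify the identity at the level of formal power series. Both sides agree with the coefficient of $\mathbf{h}^{\bm{\alpha}}/\bm{\alpha}!$ in the Taylor expansion of $f\big(G(\bm{\lambda} + \mathbf{h})\big)$ obtained by substituting the Taylor series of $G$ into that of $f$. This reduces the claim to a purely multinomial identity counting the ways to partition the derivative order $\bm{\alpha}$ into an ordered sequence of ``blocks'' of equal nonzero multi-indices of sizes $|\bm{\tau}_i|$, each block labeled by $\bm{\sigma}_i$---which is precisely what the defining conditions on $P(\bm{\alpha}, \bm{\gamma})$ and the coefficient $b_{\bm{\sigma},\bm{\tau}}$ encode. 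This is essentially the approach of Constantine and Savits, and it is the one I would follow to complete the argument in full rigor.
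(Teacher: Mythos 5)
The paper does not prove this lemma: it is imported verbatim from Constantine and Savits \cite{constantine1996multivariate}, so there is no in-paper proof to compare against. Your proposal is not itself a complete proof either — you outline two standard strategies (induction on $|\bm{\alpha}|$, and matching Taylor coefficients of $f(G(\bm{\lambda}+\mathbf{h}))$) and explicitly defer the combinatorial verification that the multiplicities assemble into the coefficients $b_{\bm{\sigma},\bm{\tau}}$ and that $P(\bm{\alpha},\bm{\gamma})$ indexes exactly the surviving terms. That deferral is the actual content of the Constantine--Savits paper, so what you have is a correct and honest sketch pointing at the right reference rather than an independent proof. Your base case check is fine, and you correctly identify the formal-power-series route as the one Constantine and Savits take; that route is genuinely cleaner than the induction because it replaces the multiplicity-tracking in the product rule with a single multinomial count of how the composite Taylor series collects terms of total degree $\bm{\alpha}$.

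One small thing worth flagging if you do write out the details: the types in the paper's definition of $P(\bm{\alpha},\bm{\gamma})$ appear to be transposed relative to how $\bm{\sigma}_i$ and $\bm{\tau}_i$ are used in the displayed formula. The formula reads $\big[\partial_{\bm{\lambda}}^{\bm{\tau}_i} G(\bm{\lambda})\big]^{\bm{\sigma}_i}$, which requires $\bm{\tau}_i \in \N^N$ (a derivative multi-index in $\bm{\lambda} \in \R^N$) and $\bm{\sigma}_i \in \N^M$ (a monomial exponent for $G(\bm{\lambda}) \in \R^M$), yet the definition declares $\bm{\sigma} \in (\N^N)^L$ and $\bm{\tau} \in (\N^M)^L$, and the constraint $\sum_i |\bm{\tau}_i|\bm{\sigma}_i = \bm{\alpha}$ forces $\bm{\sigma}_i \in \N^N$. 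Your base-case assignment $\bm{\sigma}_1 = \mathbf{e}_k$, $\bm{\tau}_1 = \mathbf{e}_j$ follows the definition's types, which then clash with the product formula; be consistent about which convention you adopt, since either is fine once fixed but mixing them will derail the inductive bookkeeping you are worried about.
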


\vspace*{-0.035cm}

Here, $[\mathbf{z}]^{\bm{\rho}} := z_1^{\rho_1} \cdots z_M^{\rho_M}$. We continue our estimation of $g_{\bm{\lambda}''}$ and its derivatives by treating each factor in \eqref{eq:g-lambda-double-prime} individually. First take $f = \rho_1$ (so $m$ plays the role of $N$) and $G = F_{\bm{\lambda}''}^{-1}$ (so $M = m$ as well). There are $\sim_{|\bm{\alpha}|} 1$ terms in the outermost sum in the Fa\`{a} di Bruno formula and $\big| \partial_{\bm{\lambda}'}^{\bm{\gamma}} \rho_1(\bm{\lambda}') \big| \lesssim_{K,\rho,|\bm{\gamma}|} 1$ for all $\bm{\gamma}$ and $\bm{\lambda}'$, so \vspace*{0.1cm}
\begin{align*}
    & \big| \partial_\mathbf{u}^{\bm{\alpha}} \big( \rho_1 \circ F_{\bm{\lambda}''}^{-1} \big)(\mathbf{u}) \big| \\[-0.2cm]
    &\qquad \leq \sum_{1 \leq |\bm{\gamma}| \leq |\bm{\alpha}|} \big| \partial_{\bm{\lambda}'}^{\bm{\gamma}} \rho_1(\bm{\lambda}') \big| \Big|_{\bm{\lambda}' = F_{\bm{\lambda}''}^{-1}(\mathbf{u})} \sum_{\bm{\sigma},\bm{\tau}} b_{\bm{\sigma},\bm{\tau}} \prod_{i=1}^{|\bm{\alpha}|} \Big| \big[ \partial_\mathbf{u}^{\bm{\tau}_i} F_{\bm{\lambda}''}^{-1}(\mathbf{u}) \big]^{\bm{\sigma}_i} \Big| \\[-0.1cm]
    &\qquad \lesssim_{|\bm{\alpha}|} \sum_{\bm{\sigma},\bm{\tau}} b_{\bm{\sigma},\bm{\tau}} \prod_{i=1}^{|\bm{\alpha}|} \Big| \big[ \partial_\mathbf{u}^{\bm{\tau}_i} F_{\bm{\lambda}''}^{-1}(\mathbf{u}) \big]^{\bm{\sigma}_i} \Big| \\[-0.1cm]
    &\qquad \lesssim_{|\bm{\alpha}|} \sum_{\bm{\sigma},\bm{\tau}} b_{\bm{\sigma},\bm{\tau}} \prod_{i=1}^{|\bm{\alpha}|} r^{-4\beta|\bm{\tau}_i|\bm{\sigma}_i} \\[-0.1cm]
    &\qquad = \sum_{\bm{\sigma},\bm{\tau}} b_{\bm{\sigma},\bm{\tau}} \+ r^{-4\beta|\bm{\alpha}|} \lesssim_{|\bm{\alpha}|} r^{-4\beta|\bm{\alpha}|}.
\end{align*}
This same computation and the derivative bounds in \eqref{eq:chi-k} likewise imply
\begin{equation*}
    \big| \partial_\mathbf{u}^{\bm{\alpha}} \big( \chi_k \circ F_{\bm{\lambda}''}^{-1} \big)(\mathbf{u}) \big| \lesssim r^{-(m+6)\beta|\bm{\alpha}|}.
\end{equation*}
Before treating $h_{\bm{\lambda}''}$ and its derivatives, we skip ahead to the Jacobian determinant factor. Since $F_{\bm{\lambda}''}$ is a diffeomorphism, assume without loss of generality that $J_{\bm{\lambda}''}(\mathbf{u}) := \det D_\mathbf{u} F_{\bm{\lambda}''}^{-1}(\mathbf{u})$ for all $\mathbf{u} \in B_k'$. Then $J_{\bm{\lambda}''}$ is a sum of $m^2$ terms of $m$ factors $\partial_{u_j} F_{\bm{\lambda}'',\ell}^{-1}(\mathbf{u})$, the $\ell$\textsuperscript{th} component of $\partial_{u_j} F_{\bm{\lambda}''}^{-1}(\mathbf{u})$. Consequently, $\partial_\mathbf{u}^{\bm{\alpha}} J_{\bm{\lambda}''}(\mathbf{u})$ is a sum of $m$-fold products of factors $\partial_\mathbf{u}^{\bm{\gamma}} F_{\bm{\lambda}'',\ell}^{-1}(\mathbf{u})$ with $1 \leq |\bm{\gamma}| \leq |\bm{\alpha}|$, and the number of terms in the sum is a function of $m$ and $|\bm{\alpha}|$ alone. Since
\begin{equation*}
    \big| \partial_\mathbf{u}^{\bm{\gamma}} F_{\bm{\lambda}''}^{-1}(\mathbf{u}) \big| \lesssim_{\beta,|\bm{\gamma}|} r^{-\beta(4|\bm{\gamma}|-3)} \sim_{|\Omega|,\beta,|\bm{\alpha}|} r^{-4\beta|\bm{\alpha}|},
\end{equation*}
the crude bound
\begin{equation*}
    \big| \partial_\mathbf{u}^{\bm{\alpha}} J_{\bm{\lambda}''}(\mathbf{u}) \big| \lesssim_{m,|\bm{\alpha}|} \big( r^{-4\beta|\bm{\alpha}|} \big)^m = r^{-4m\beta|\bm{\alpha}|} \vspace*{-0.025cm}
\end{equation*}
follows.

Now we double back to the factor $h_{\bm{\lambda}''}$ that we skipped. To control this, we again apply Theorem \ref{thm:faa-di-bruno}, now with $f(\bm{\lambda}') = \psi\big( 2^{j-i} r D_{(\bm{\lambda}',\bm{\lambda}'')} \Phi_{(\bm{\lambda}',\bm{\lambda}'')} \big)$ and $G(\mathbf{u}) = F_{\bm{\lambda}''}^{-1}$:
\begin{equation} \label{eq:h-lambda-double-prime}
\begin{aligned}
    \big| \partial_\mathbf{u}^{\bm{\alpha}} h_{\bm{\lambda}''}(\mathbf{u}) \big| &\leq \sum_{1 \leq |\bm{\gamma}| \leq |\bm{\alpha}|} \big| \partial_{\bm{\lambda}'}^{\bm{\gamma}} \psi\big( 2^{j-i} r D_{(\bm{\lambda}',\bm{\lambda}'')} \Phi_{(\bm{\lambda}',\bm{\lambda}'')} \big) \big| \Big|_{\bm{\lambda}' = F_{\bm{\lambda}''}^{-1}(\mathbf{u})} \\
    &\hspace*{2.25cm} \cdot \sum_{\bm{\sigma},\bm{\tau}} b_{\bm{\sigma},\bm{\gamma}} \prod_{k=1}^{|\bm{\alpha}|} \Big| \big[ \partial_\mathbf{u}^{\bm{\tau}_k} F_{\bm{\lambda}''}^{-1}(\mathbf{u}) \big]^{\bm{\sigma}_k} \Big|.
\end{aligned}
\end{equation}
The expression in the second line is amenable to \eqref{eq:F-inv-est}:
\begin{equation*}
    \sum_{\bm{\sigma},\bm{\tau}} b_{\bm{\sigma},\bm{\gamma}} \prod_{k=1}^{|\bm{\alpha}|} \Big| \big[ \partial_\mathbf{u}^{\bm{\tau}_k} F_{\bm{\lambda}''}^{-1}(\mathbf{u}) \big]^{\bm{\sigma}_k} \Big| \lesssim_{|\bm{\alpha}|} \prod_{k=1}^{|\bm{\alpha}|} r^{-4|\bm{\tau}_k|\bm{\sigma}_k} \lesssim_{|\Omega|} r^{-4\beta|\bm{\alpha}|}.
\end{equation*}
To bound the expression in the first line, recall that $\psi$ is a rapidly-decreasing function of $mN$ variables, so $\big| \partial_\mathbf{t}^{\bm{\gamma}} \psi(\mathbf{t}) \big| \lesssim_{p,|\bm{\alpha}|} |\mathbf{t}|^{-p}$ for $p \in \N$ and all $1 \leq |\bm{\gamma}| \leq |\bm{\alpha}|$. In particular, appealing to Theorem \ref{thm:faa-di-bruno} once more and applying this rapid decay bound with $p = |\bm{\alpha}|$ yields
\begin{align*}
    & \big| \partial_{\bm{\lambda}'}^{\bm{\alpha}} \psi\big( 2^{j-i} r D_{(\bm{\lambda}',\bm{\lambda}'')} \Phi_{(\bm{\lambda}',\bm{\lambda}'')} \big) \big| \\
    &\quad \leq \sum_{1 \leq |\bm{\gamma}| \leq |\bm{\alpha}|} \big| \partial_\mathbf{t}^{\bm{\gamma}} \psi(\mathbf{t}) \big| \Big|_{\mathbf{t} = 2^{j-i} r D_{\bm{\lambda}} \Phi_{\bm{\lambda}}} \sum_{\bm{\sigma},\bm{\tau}} b_{\bm{\sigma},\bm{\gamma}} \prod_{k=1}^{|\bm{\alpha}|} \Big| \big[ 2^{j-i} r \+ \partial_{\bm{\lambda}'}^{\bm{\tau}_k} D_{\bm{\lambda}'} \Phi_{(\bm{\lambda}',\bm{\lambda}'')} \big]^{\bm{\sigma}_k} \Big| \\
    &\quad \lesssim_{\beta,|\bm{\alpha}|} \sum_{1 \leq |\bm{\gamma}| \leq |\bm{\alpha}|} \big\| 2^{j-i} r D_{\bm{\lambda}} \Phi_{\bm{\lambda}} \big\|^{-|\bm{\alpha}|} \sum_{\bm{\sigma},\bm{\tau}} b_{\bm{\sigma},\bm{\gamma}} \prod_{k=1}^{|\bm{\alpha}|} (2^{j-i}r)^{\bm{\sigma}_k} r^{-\beta(|\bm{\tau}_k|+1)\bm{\sigma}_k} \\
    &\quad \lesssim_{|\Omega|} \sum_{1 \leq |\bm{\gamma}| \leq |\bm{\alpha}|} (2^{j-i}r)^{-|\bm{\alpha}|} \sum_{\bm{\sigma},\bm{\tau}} b_{\bm{\sigma},\bm{\gamma}} \+ (2^{j-i}r)^{|\bm{\alpha}|} \cdot r^{-2\beta|\bm{\alpha}|} \\
    &\quad \lesssim_{|\bm{\alpha}|} r^{-2\beta|\bm{\alpha}|}.
\end{align*}
Thus, by \eqref{eq:h-lambda-double-prime}
\begin{equation*}
    \big| \partial_\mathbf{u}^{\bm{\alpha}} h_{\bm{\lambda}''}(\mathbf{u}) \big| \lesssim \sum_{1 \leq |\bm{\gamma}| \leq |\bm{\alpha}|} r^{-2\beta|\bm{\alpha}|} \cdot r^{-4\beta|\bm{\alpha}|} \sim_{|\bm{\alpha}|} r^{-6\beta|\bm{\alpha}|}.
\end{equation*}
This in hand, we can at last bound the derivatives of $g_{\bm{\lambda}''}$. The number of terms that arise by application of the product rule to \eqref{eq:g-lambda-double-prime} is a constant depending only on $|\bm{\alpha}|$, so we may combine the bounds on the derivatives of each of the four factors to conclude that
\begin{equation} \label{eq:g-lambda-double-prime-derivs}
    \big| \partial_\mathbf{u}^{\bm{\alpha}} g_{\bm{\lambda}''}(\mathbf{u}) \big| \lesssim_{|\bm{\alpha}|} r^{-4\beta|\bm{\alpha}|} \cdot r^{-(m+6)\beta|\bm{\alpha}|} \cdot r^{-6\beta|\bm{\alpha}|} \cdot r^{-4m\beta|\bm{\alpha}|} = r^{-(5m+16)\beta|\bm{\alpha}|}.
\end{equation}
This is valid for all $|\bm{\alpha}| \geq 1$. In the $\bm{\alpha} = \mathbf{0}$ case, it is enough to note that $\rho_1 \circ F_{\bm{\alpha}''}^{-1}$, $\chi_k \circ F_{\bm{\lambda}''}^{-1}$, and $h_{\bm{\lambda}''}$ are bounded independently of $\bm{\lambda}''$, so $\big| g_{\bm{\lambda}''}(\mathbf{u}) \big| \lesssim 1 \cdot 1 \cdot 1 \cdot r^{-\beta} = r^{-\beta}$ for all $\mathbf{u} \in B_k'$ by \eqref{eq:diffeo-deriv-bounds}.

The final order of business is to write out the Taylor series approximation of $g_{\bm{\lambda}''}$ and use it to bound \eqref{eq:final-integral}. (Recall our purpose is to use this to control \eqref{eq:app-line-4}, which is a sum of terms of this form, and thereby prove the lemma.) The factor $\rho_2(\bm{\lambda}'')$ will pose no issue, so we set it aside for now, and the other factor $\phi\big( 2^j r F_{\bm{\lambda}''}^{-1}(\mathbf{u}) )$ will take a Schwartz class bound. With $M := 2 \lceil c \rceil - m$,
\begin{align*}
    &\int_{\R^m} g_{\bm{\lambda}''}(\mathbf{u}) \phi(2^j r\mathbf{u}) \, d\mathbf{u} \\
    &\hspace{1.5cm} = \int_{\{ |\mathbf{u}| \leq (2^j r)^{-1/2} \}} \phi(2^j r\mathbf{u}) \Bigg( \sum_{0 \leq |\bm{\alpha}| \leq M} \frac{\partial_\mathbf{u}^{\bm{\alpha}} g_{\bm{\lambda}''}(\mathbf{0})}{\bm{\alpha}!} \mathbf{u}^{\bm{\alpha}} \\
    &\hspace{2.5cm} + \sum_{|\bm{\alpha}| = M+1} \int_0^1 \big[ \partial_\mathbf{u}^{\bm{\alpha}} g_{\bm{\lambda}''}(t\mathbf{u}) - \partial_\mathbf{u}^{\bm{\alpha}} g_{\bm{\lambda}''}(\mathbf{0}) \big] (1-t)^M \+ dt \, \frac{\mathbf{u}^{\bm{\alpha}}}{\bm{\alpha}!} \Bigg) d\mathbf{u} \\
    &\hspace{2cm} + \int_{\{ |\mathbf{u}| > (2^j r)^{-1/2} \}} \phi(2^j r\mathbf{u}) g_{\bm{\lambda}''}(\mathbf{u}) \, d\mathbf{u} \\
    &\hspace{1.5cm} =: A_1 + A_2.
\end{align*}
To complete the proof, we must estimate $A_1$ and $A_2$ uniformly in $\bm{\lambda}''$. The latter is simple owing the rapid decay of $\phi$ and the basic inequality $\big| g_{\bm{\lambda}''}(\mathbf{u}) \big| \lesssim r^{-\beta}$:
\begin{align*}
    |A_2| &\lesssim r^{-\beta} \int_{\{ |\mathbf{u}| > (2^j r)^{-1/2} \}} |\phi(2^j r\mathbf{u})| \, d\mathbf{u} \\
    &\lesssim_c r^{-\beta} \int_{\{ |\mathbf{u}| > (2^j r)^{-1/2} \}} (2^j r |\mathbf{u}|)^{-2c-2m} \, d\mathbf{u} \\
    &\sim r^{-\beta} \cdot (2^j r)^{-2c-2m} \cdot (2^j r)^{c+2m} = r^{-\beta} (2^j r)^{-c}.
\end{align*}
To estimate $A_1$, observe first that
\begin{align*}
    \int_{\R^m} \mathbf{u}^{\bm{\alpha}} \phi(2^j r\mathbf{u}) \, d\mathbf{u} &= \mathcal{F}_\mathbf{u}\big[ \mathbf{u}^{\bm{\alpha}} \phi(2^j r\mathbf{u}) \big](\mathbf{0}) \\
    &= (-2\pi i)^{|\bm{\alpha}|} \partial_{\bm{\xi}}^{\bm{\alpha}} \mathcal{F}_\mathbf{u}\big[ \phi(2^j r\mathbf{u}) \big](\bm{\xi}) \big|_{\bm{\xi} = \mathbf{0}} \\
    &= (-2\pi i)^{|\bm{\alpha}|} (2^j r)^{-m} \partial_{\bm{\xi}}^{\bm{\alpha}} \medhat{\phi}(2^{-j}r^{-1}\bm{\xi}) \big|_{\bm{\xi} = \mathbf{0}} \\
    &= (-2\pi i)^{|\bm{\alpha}|} (2^j r)^{-m} \partial^{\bm{\alpha}} \eta^{(1)}(\mathbf{0}),
\end{align*}
and this vanishes for all $\bm{\alpha} \in \N^m$ because $\eta$ is identically $0$ in a neighborhood of the origin. Therefore,
\begin{align*}
    &\int_{\{ |\mathbf{u}| \leq (2^j r)^{-1/2} \}} \phi(2^j r\mathbf{u}) \sum_{0 \leq |\bm{\alpha}| \leq M} \frac{\partial_\mathbf{u}^{\bm{\alpha}} g_{\bm{\lambda}''}(\mathbf{0})}{\bm{\alpha}!} \mathbf{u}^{\bm{\alpha}} \, d\mathbf{u} \\
    &\hspace{0.5cm} = -\int_{\{ |\mathbf{u}| > (2^j r)^{-1/2} \}} \phi(2^j r\mathbf{u}) \sum_{0 \leq |\bm{\alpha}| \leq M} \frac{\partial_\mathbf{u}^{\bm{\alpha}} g_{\bm{\lambda}''}(\mathbf{0})}{\bm{\alpha}!} \mathbf{u}^{\bm{\alpha}} \, d\mathbf{u}.
\end{align*}
We already know that the $\bm{\alpha} = \mathbf{0}$ term is $\lesssim_c r^{-\beta}$ in absolute value. To bound the remaining terms, recall that we assumed $2^j r^{1 + a_1' \beta} > 1$. At last setting $a_1' := 5m+16$, we use \eqref{eq:g-lambda-double-prime-derivs} and the rapid decay bound $|\phi(2^j r\mathbf{u})| \lesssim_M (2^j r|\mathbf{u}|)^{-2c-m-|\bm{\alpha}|}$ to conclude that
\begin{align*}
    &\left| \int_{\{ |\mathbf{u}| > (2^j r)^{-1/2} \}} \phi(2^j r\mathbf{u}) \sum_{1 \leq |\bm{\alpha}| \leq M} \frac{\partial_\mathbf{u}^{\bm{\alpha}} g_{\bm{\lambda}''}(\mathbf{0})}{\bm{\alpha}!} \mathbf{u}^{\bm{\alpha}} \, d\mathbf{u} \right| \\
    &\qquad \lesssim_M \sum_{1 \leq |\bm{\alpha}| \leq M} \int_{\{ |\mathbf{u}| > (2^j r)^{-1/2} \}} (2^j r|\mathbf{u}|)^{-2c-m-|\bm{\alpha}|} \cdot r^{-(5m+16)\beta|\bm{\alpha}|} \cdot |\mathbf{u}|^{|\bm{\alpha}|} \, d\mathbf{u} \\
    &\qquad = \sum_{1 \leq |\bm{\alpha}| \leq M} (2^j r)^{-2c-m-|\bm{\alpha}|} \cdot r^{-(5m+16)\beta|\bm{\alpha}|} \int_{\{ |\mathbf{u}| > (2^j r)^{-1/2} \}} |\mathbf{u}|^{-2c-m} \, d\mathbf{u} \\
    &\qquad \sim_{m,c} \sum_{1 \leq |\bm{\alpha}| \leq M} (2^j r)^{-2c-m-|\bm{\alpha}|} \cdot r^{-(5m+16)\beta|\bm{\alpha}|} \cdot (2^j r)^c \\
    &\qquad = \sum_{1 \leq |\bm{\alpha}| \leq M} (2^j r)^{-c-m-|\bm{\alpha}|} \cdot r^{-(5m+16)\beta|\bm{\alpha}|} \\
    &\qquad \leq \sum_{1 \leq |\bm{\alpha}| \leq M} r^{-m(5m+16)\beta} (2^j r)^{-c} \\
    &\qquad \sim_c r^{-m(5m+16)\beta} (2^j r)^{-c}.
\end{align*}
It remains only to bound the Taylor error term. By Taylor's theorem and the boundedness of $\phi$,
\begin{align*}
    & \left| \int_{\{ |\mathbf{u}| \leq (2^j r)^{-1/2} \}} \phi(2^j r\mathbf{u}) \sum_{|\bm{\alpha}| = M+1} \int_0^1 \big[ \partial_\mathbf{u}^{\bm{\alpha}} g_{\bm{\lambda}''}(t\mathbf{u}) - \partial_\mathbf{u}^{\bm{\alpha}} g_{\bm{\lambda}''}(\mathbf{0}) \big] (1-t)^M \+ dt \, \frac{\mathbf{u}^{\bm{\alpha}}}{\bm{\alpha}!} d\mathbf{u} \right| \\
    &\qquad \leq \int_{\{ |\mathbf{u}| \leq (2^j r)^{-1/2} \}} O\!\left( \sup_{|\bm{\alpha}|=M+1} \big\| \partial_\mathbf{u}^{\bm{\alpha}} g_{\bm{\lambda}''} \big\|_{L^\infty(B_k')} |\mathbf{u}|^{M+1} \right) \! d\mathbf{u} \\
    &\qquad \lesssim r^{-(5m+16)\beta(M+1)} \int_{\{ |\mathbf{u}| \leq (2^j r)^{-1/2} \}} |\mathbf{u}|^{M+1} \, d\mathbf{u} \\
    &\qquad \sim_c r^{-2c(5m+16)\beta} \cdot (2^j r)^{-(2\lceil c \rceil + 1)/2} \\
    &\qquad \lesssim_c r^{-2c(5m+16)\beta} \cdot (2^j r)^{-c}.
\end{align*}
This shows that
\begin{align*}
    |A_1| + |A_2| &\lesssim r^{-m(5m+16)\beta} (2^j r)^{-c} + r^{-2c(5m+16)\beta} \cdot (2^j r)^{-c} + r^{-\beta} (2^j r)^{-c} \\
    &\lesssim_{|\Omega|,\beta,c} r^{-a_1 c\beta} \cdot (2^j r)^{-c} \sim \big( 1 + 2^j r^{1 + a_1 \beta} \big)^{-c},
\end{align*}
where $a_1 := \max \{ m,2 \} (5m+16)$. All implicit constants are independent of $\bm{\lambda}''$ and $\rho_2 \in L^1(\R^{N-m})$, so
\begin{equation*}
    \int_{\R^{N-m}} \rho_2(\bm{\lambda}'') \int_{\R^m} g_{\bm{\lambda}''}(\mathbf{u}) \phi(2^j r\mathbf{u}) \, d\mathbf{u} \+ d\bm{\lambda}'' \lesssim \big( 1 + 2^j r^{1 + a_1 \beta} \big)^{-c}
\end{equation*}
as we sought to show.
\end{proof}

\phantomsection
\section*{Acknowledgments}
The author thanks Paige Bright, Caleb Marshall, and Bobby Wilson for their helpful comments on an earlier draft of this work. In addition, he is grateful to Tuomas Orponen for drawing his attention to Hovila's results on the Heisenberg group.

\bibliographystyle{plain}
\bibliography{references}

\end{document}